\theoremstyle{plain}
\newtheorem{theorem}{Theorem}[section]
\newtheorem{proposition}[theorem]{Proposition}
\newtheorem{corollary}[theorem]{Corollary}
\newtheorem{conjecture}[theorem]{Conjecture}
\theoremstyle{remark}
\newtheorem{remark}[equation]{Remark}
\def\cyc{\hbox{\textsc{Cyc}}}
\def\seq{\hbox{\textsc{Seq}}}
\def\set{\hbox{\textsc{Set}}}
\newcommand{\oeis}[1]{\color{blue}\textnormal{\href{https://oeis.org/#1}{#1}}\color{black}}	
\title{A unified treatment of families of partition funcions}
\author{
	Lida Ahmadi 
	\and
	Ricardo G\'omez A\'iza
	\and
	Mark Daniel Ward
}
\date{}							
\begin{document}
\maketitle

\begin{abstract}
	We present a unified framework of combinatorial descriptions,
        and the analogous asymptotic growth of the coefficients of two
        general families of functions related to integer partitions.
	In particular, we resolve several conjectures and verify several claims
	that are posted on the On-Line Encyclopedia of Integer Sequences.
	We perform the asymptotic analysis by systematically applying
        the Mellin transform, residue analysis, 
	and the saddle point method. 
        The combinatorial descriptions of these families of
        generalized partition functions involve colorings
	of Young tableaux, along with their ``divisor diagrams'',
        denoted with sets of colors whose sizes are controlled by
        divisor functions.
	\medskip

	\noindent
	\emph{Keywords:} partition function, divisor function,
	Mellin transform, saddle point method
	
	\smallskip
	
	\noindent
	\emph{MSC2010:}
		05A15,	
		05A16,	
\end{abstract}

\renewcommand{\contentsname}{}

\section{Introduction}

Given a triple of nonnegative integers $(i,j,k)$ with $i+j+k\geq 1$, we consider the generating functions
\begin{align}
\label{eq:GeneralPartitionMulti}
	P(z) = P^{\left<i,j,k\right>}(z) & = \prod (1-z^{n_1\cdots n_{i}d_{1}\cdots
          d_{j}e_{1}\cdots e_{k}})^{-n_1\cdots n_i/d_{1}\cdots d_{j}}
\end{align}
and
\begin{align}
\label{eq:GeneralPartitionPower}
	Q(z) = Q^{\left<i,j,k\right>}(z) & = \prod (1+z^{n_1\cdots n_{i}d_{1}\cdots
          d_{j}e_{1}\cdots e_{k}})^{n_1\cdots n_i/d_{1}\cdots d_{j}} 
\end{align}
with the $(i+j+k)$-fold product taken over all positive integers $n$'s, $d$'s,
and $e$'s.  
The $n$'s refer to the indices occurring in the exponent as \emph{n}umerators; the $d$'s refer to
indices occurring in the exponent as \emph{d}enominators; and the $e$'s are \emph{e}xtra indices.
We say that any such infinite product represented by the triple $(i,j,k)$ is~\emph{admissible}. 

\subsection{Integer partitions, and integer partitions with distinct summands}
The triple $(i,j,k) = (0, 0, 1)$ yields the most well-known specific
case of this family of generating functions:
\begin{align}
\label{eq:GFPartitions}
	P(z) = \prod\limits_{k=1}^\infty \frac{1}{1-z^k} 
	& = \sum\limits_{n=0}^\infty{P_n}z^n 
\end{align}
and 
\begin{align}
\label{eq:GFPartitionsDistinct}
	Q(z) = \prod\limits_{k=1}^\infty (1+z^k) 
	& = \sum\limits_{n=0}^\infty{Q_n}z^n. 
\end{align}
In this case, the generating function $P(z)$ enumerates the combinatorial
class $\mathcal{P}$ of integer partitions, i.e., $P_n$ is the number
of partitions of~$n$.  Similarly, the generating function $Q(z)$
enumerates the combinatorial class 
$\mathcal{Q} \subset \mathcal{P}$ of integer partitions
with distinct summands, or equivalently, $Q_n$ is the number of
partitions of~$n$ with distinct summands.

As unlabelled structures, the product formulas in equations
\eqref{eq:GFPartitions} and \eqref{eq:GFPartitionsDistinct} are
combinatorially justified by viewing
$\mathcal P$ as a multiset of positive integers and $\mathcal Q$
as a powerset of positive integers.  In other words, these classes
can be defined by the well-known combinatorial specifications
\begin{align}
\label{eq:SpecPartitionsMulti}
	\mathcal{P} \triangleq \textsc{MSet}\big(\textsc{Seq}_{\geq 1}(\mathcal Z)\big)
\end{align}
and
\begin{align}
\label{eq:SpecPartitionsPower}
	\mathcal{Q} \triangleq \textsc{PSet}\big(\textsc{Seq}_{\geq 1}(\mathcal Z)\big),
\end{align}
with $\mathcal Z$ denoting the atomic class. In this context, $P(z)$ and $Q(z)$ can be referred to
as \emph{multiset} and \emph{powerset} partition functions, respectively.
(See \cite {FlajoletSedewick09} for a broad general background
on analytic combinatorics.)
The genesis of the study of partition functions goes back to the work of Euler \cite{Euler48}.
The so called ``partition problem'' consists of finding the asymptotic growth of the coefficients of
functions like $P(z)$ and $Q(z)$. The coefficients of the function $P(z)$ in fact result from 
what we now know as \emph{the Euler transform} of the sequence $a(n) = 1$,
which is defined in general as
\begin{align}
	\prod\limits_{n \geq 1} \frac{1}{(1-z^n)^{a(n)}}.
\end{align}
The celebrated and well-known works of Hardy and Ramanujan
are considered the first main contributions to the partition problem
(see \cite{HardyRamanujan18A, HardyRamanujan18B}). 
In particular,
for the triple $(i,j,k) = (0, 0, 1)$, they showed that
\begin{align}
\label{eq:HardyRam}
	P_n \sim \frac{e^{\pi\sqrt{2n/3}}}{4\sqrt{3}\!\ n}
\qquad\hbox{and}\qquad
	Q_n \sim \frac{e^{\pi\sqrt{n/3}}}{4\cdot3^{1/4}\!\ n^{3/4}}.
\end{align}
The main method they used is nowadays known as the ``circle method'' and is based
on the representation of the coefficients of power series by means of the integral Cauchy formula.
One of our goals is to solve the partition problem (first order asymptotics) for the more
general class of partition functions of the forms given in
equations \eqref{eq:GeneralPartitionMulti} and \eqref{eq:GeneralPartitionPower}.
\begin{table}[h!]
\centering
\footnotesize
	\begin{tabular}{|c || c | c|}
	\hline
		$(i,j,k)$ & $\log ([z^n]P(z))\sim$ & $\log ([z^n]Q(z))\sim$ \\
		\hline\hline
		$(1,0,0)$ & \cellcolor{gray!25} given in OEIS \oeis{A000219}
			&  \cellcolor{gray!25} given in OEIS \oeis{A026007} \\
		\hline
		$(2,0,0)$ & \cellcolor{gray!25} given in OEIS \oeis{A280540}
			&  \cellcolor{blue!25} conjectured in OEIS \oeis{A280541} \\
		\hline
		$(3,0,0)$ & \cellcolor{blue!25} conjectured in OEIS \oeis{A318413}
			& \cellcolor{blue!25} conjectured in OEIS \oeis{A318414} \\
		\hline
		$(1,0,1)$ & \cellcolor{gray!25} given in OEIS \oeis{A061256}
			& \cellcolor{gray!25} given in OEIS \oeis{A192065} \\
		\hline
		$(1,0,2)$ & \cellcolor{green!25} not given in OEIS \oeis{A174467}
			& $\times$ \\
		\hline \hline
		$(0,0,1)$ & \cellcolor{gray!25} given in OEIS \oeis{A000041}
			& \cellcolor{gray!25} given in OEIS \oeis{A000009}  \\
		\hline
		$(0,0,2)$ & \cellcolor{gray!25} given in OEIS \oeis{A006171}
			& \cellcolor{blue!25} conjectured in OEIS \oeis{A107742} \\
		\hline
		$(0,0,3)$ & \cellcolor{green!25} not given in OEIS \oeis{A174465}
			& \cellcolor{green!25} not given in OEIS \oeis{A280473} \\
		\hline
		$(0,0,4)$ & \cellcolor{green!25} not given in OEIS \oeis{A280487}
			& \cellcolor{green!25} not given in OEIS \oeis{A280486} \\
			\hline
		$(0,1,1)$ & \cellcolor{gray!25} given in OEIS \oeis{A305127}
			& \cellcolor{gray!25} given in OEIS \oeis{A318769} \\
		\hline \hline
		$(0,1,0)$ & \cellcolor{red!25} \emph{incorrect in OEIS} \oeis{A028342}
			& \cellcolor{blue!25} conjectured in OEIS \oeis{A168243} \\
		\hline
		$(0,2,0)$ & \cellcolor{green!25} not given in OEIS \oeis{A318695}
			& \cellcolor{green!25} not given in OEIS \oeis{A318696} \\
		\hline
		$(0,3,0)$ & \cellcolor{green!25} not given in OEIS \oeis{A318966}
			& \cellcolor{green!25} not given in OEIS \oeis{A318967} \\
		\hline
	\end{tabular}
	\caption{A list of 25 sequences and the analogous OEIS entries
          corresponding to the 
          generating functions in
          equations~\eqref{eq:GeneralPartitionMulti}
          and~\eqref{eq:GeneralPartitionPower}. 
These entries are collected into three groups: $(i \geq 1, j, k)$, $(0, j, k\geq 1)$ and $(0, j\geq 1, 0)$.
	The asymptotic growth
of $\log ([z^n]P(z))$ or $\log ([z^n]Q(z))$ is either (a) given in
OEIS; (b) conjectured in OEIS but incorrect; (c) stated only \emph{as a
        conjecture} in OEIS; or (d) not given in OEIS. 
	}
	\label{tab:MultiPowerOEIS}
\end{table}

\begin{table}[h!]
\centering
\footnotesize
	\begin{tabular}{|c || c | c|}
	\hline
		$(i,j,k)$ & $[z^n]P(z)\sim$ & $[z^n]Q(z)\sim$ \\
		\hline\hline
		$(1,0,0)$ & \cellcolor{gray!25} given in OEIS \oeis{A000219}
			&  \cellcolor{gray!25} given in OEIS \oeis{A026007} \\
		\hline
		$(1,0,1)$ & \cellcolor{gray!25} given in OEIS \oeis{A061256}
			& \cellcolor{gray!25} given in OEIS \oeis{A192065} \\
		\hline \hline
		$(0,0,1)$ & \cellcolor{gray!25} given in OEIS \oeis{A000041}
			& \cellcolor{gray!25} given in OEIS \oeis{A000009}  \\
		\hline \hline
		$(0,1,0)$ & \cellcolor{green!25} not given in OEIS \oeis{A028342}
			& \cellcolor{green!25} not given in OEIS \oeis{A168243} \\
		\hline
		$(0,2,0)$ & $\times$
			& \cellcolor{green!25} not given in OEIS \oeis{A318696} \\
		\hline
	\end{tabular}
	\caption{Status in OEIS of the asymptotic behavior of the coefficient
	  for the cases in Corollary \ref{cor:saddle1P}.}
	\label{tab:asymptoticsOEIS}
\end{table}

\subsection{A unified framework}

Several other particular cases have also been previously considered by
other authors.  We endeavor to introduce a framework to
systematically classify these families of combinatorial classes and
their analogous generating functions.  Table~\ref{tab:MultiPowerOEIS} shows
a list of entries that appear in the On-Line Encyclopedia of Integer Sequences (OEIS)
\cite{Sloane06} that correspond to certain triples $(i,j,k)$. We will see that, in general,
what we are dealing with now is what we called the \emph{``cyclic'' Euler transform} of a sequence $a(n)$
which we now define as the sequence of coefficients of the function
\begin{align}
	\prod_{n\geq 1} \exp\left( \sum\limits_{\ell=1}^\infty \frac{z^{n \ell }}{n\ell} \right)^{a(n)}.
\end{align}

Among the cases that we identified in the OEIS, some contain an
asymptotic analysis, and some give a combinatorial description, but we
endeavor to handle all such cases systematically.
Several claims about the asymptotic growth of the coefficients of the
generating functions are stated only as conjectures without proofs, all due to
V. Kot\u{e}\u{s}ovec. 
We aim to present a more general treatment and a unifying framework for
the combinatorial specifications of the labelled classes,
as well as a systematic study of the asymptotic analysis of the number
of these combinatorial objects.

\subsection{On Kot\u{e}\u{s}ovec's conjectures}

We are able to resolve all of Kot\u{e}\u{s}ovec's conjectures about
these families of combinatorial classes and their generating functions. 
We are delighted to discover that all of his conjectures are true, 
with one exception:  In the case of the admissible triple $(i,j,k) = (0,1,0)$, 
Kot\u{e}\u{s}ovec's conjecture about the asymptotic growth of the
logarithm of coefficients is incorrect.  
We provide proofs of all of his conjectures.  In the case of the
erroneous conjecture about $(0,1,0)$, we first explain the likely
source of the confusion (likely due to numerical error), and we
provide the corrected asymptotic analysis for this case as well.

Because the erroneous conjecture led us to study this family of
combinatorial objects, generating functions, and their asymptotic
analysis, we start with Kot\u{e}\u{s}ovec's slightly erroneous
conjecture about the case~$(0,1,0)$:

\begin{conjecture}[Kot\u{e}\u{s}ovec, Sep. 2018, see \oeis{A028342}\ in OEIS]
\label{conj:Kotesovec}
	Let
	\begin{equation}
	\label{eq:GFKotesovecWrong}
		P(z) \triangleq \prod_{d=1}^\infty(1-z^d)^{-1/d}
		= \sum_{n=0}^\infty \frac{p_n}{n!}z^n.
	\end{equation}
	The coefficients of $P(z)$ have the asymptotic growth rate
	\begin{equation}
	\label{eq:KotesovecConjP}
		\log \frac{p_n}{n!} \sim \frac{\log 2}{2}\log^2 n.
	\end{equation}
\end{conjecture}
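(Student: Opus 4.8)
The plan is to pass from the infinite product over $d$ to a single number-theoretic series sitting inside a logarithm, to analyze that series near the radius of convergence by a Mellin transform, and finally to recover $p_n/n! = [z^n]P(z)$ by a saddle-point (Hayman-type) argument. First I would expand the logarithm: since $-\log(1-z^d)=\sum_{m\ge1}z^{dm}/m$,
\[
\log P(z)=\sum_{d\ge1}\frac1d\sum_{m\ge1}\frac{z^{dm}}{m}=\sum_{d,m\ge1}\frac{z^{dm}}{dm}=\sum_{N\ge1}\frac{\tau(N)}{N}\,z^{N},
\]
where $\tau$ is the number-of-divisors function; this is exactly the $a(n)\equiv1$ instance of the ``cyclic Euler transform'' discussed above. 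Hence the whole problem is controlled by the behavior of $\Lambda(t):=\log P(e^{-t})=\sum_{N\ge1}\tfrac{\tau(N)}{N}e^{-Nt}$ as $t\to0^+$.

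Second, I would compute the Mellin transform of $\Lambda$. Its associated Dirichlet series is $\sum_{N\ge1}\tau(N)N^{-s-1}=\zeta(s+1)^2$, so the Mellin transform is $\Gamma(s)\,\zeta(s+1)^2$ and $\Lambda(t)=\frac{1}{2\pi i}\int_{(c)}\Gamma(s)\zeta(s+1)^2\,t^{-s}\,ds$ for $c>0$. Shifting the contour leftward, the integrand has a triple pole at $s=0$ — the simple pole of $\Gamma(s)$ meeting the double pole of $\zeta(s+1)^2$ — together with simple poles at the negative integers coming from $\Gamma$. Using $\zeta(s+1)=s^{-1}+\gamma+O(s)$, $\Gamma(s)=s^{-1}-\gamma+O(s)$, and $t^{-s}=1-s\log t+\tfrac12 s^2\log^2 t+O(s^3)$, the residue at $s=0$ produces an expansion of the shape
\[
\Lambda(t)=\tfrac12\log^2(1/t)+A\,\log(1/t)+B+O(t),
\]
with $A,B$ explicit constants (involving $\gamma$ and a Stieltjes constant), the remaining poles contributing only $O(t)$.

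Third, I would run the saddle-point method on $[z^n]P(z)=\frac{1}{2\pi i}\oint P(z)\,z^{-n-1}\,dz$ with $z=e^{-t}$. The saddle $t_n$ solves $-\Lambda'(t_n)=n$; from the expansion, $-\Lambda'(t)=\big(\log(1/t)+O(1)\big)/t$, so $t_n=\tfrac{\log n}{n}(1+o(1))$, more precisely $t_n=\tfrac{\log n-\log\log n+O(1)}{n}$. After verifying the admissibility hypotheses — monotonicity and growth of $\Lambda,\Lambda',\Lambda''$, and a suitable minor-arc bound showing $|P(re^{i\theta})|$ is negligible against $P(r)$ for $|\arg z|$ bounded away from $0$ (this is somewhat delicate here because $P$ grows only like $\exp(\tfrac12\log^2\tfrac{1}{1-z})$ near its singularity, slower than in the classical Meinardus/Hardy--Ramanujan setting, so one needs a genuine lower bound on $\sum_N\tfrac{\tau(N)}{N}r^N(1-\cos N\theta)$) — one obtains $\log[z^n]P(z)=\Lambda(t_n)+n t_n-\tfrac12\log\!\big(2\pi\,\Lambda''(t_n)\big)+o(1)$. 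Since $\Lambda''(t_n)\asymp n^2/\log n$, the last term is $\sim-\log n$ and it cancels $n t_n\sim\log n$ at leading order, so $\Lambda(t_n)\sim\tfrac12\log^2(1/t_n)\sim\tfrac12\log^2 n$ dominates and $\log(p_n/n!)$ is, to leading order, a constant multiple of $\log^2 n$.

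The main obstacle — and the entire content of the conjecture — is pinning down that leading constant and confirming it equals $\tfrac{\log 2}{2}$. This requires (i) retaining enough terms in the Mellin expansion of $\Lambda$, (ii) solving the saddle-point equation to sufficient precision and tracking how the $-\log\log n$ and $O(1)$ corrections in $t_n$ propagate through $\tfrac12\log^2(1/t_n)$, and (iii) making the error estimates in the saddle-point integral uniform in $n$. The conjecture stands or falls on whether this bookkeeping really yields the coefficient $\tfrac{\log 2}{2}$ in front of $\log^2 n$.
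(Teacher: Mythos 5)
Your technical route --- expanding $\log P(z)=\sum_N\tau(N)z^N/N$, computing the Mellin transform $\Gamma(s)\zeta(s+1)^2$, extracting the triple pole at $s=0$ to get $\Lambda(t)=\tfrac12\log^2(1/t)+\gamma\log(1/t)+B+O(t)$, and running a saddle point at $t_n=\operatorname{W}(e^\gamma n)/n\sim(\log n)/n$ --- is precisely the one the paper takes for the case $(i,j,k)=(0,1,0)$ (see Proposition~\ref{prop:MellinMultiSet}, Table~\ref{table:polynomialsP}, and Theorem~\ref{thm:010P}). But there is a decisive gap in your final paragraph: you have already, in your own derivation, determined the leading constant, and it is \emph{not} $\tfrac{\log 2}{2}$.

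You write $\Lambda(t_n)\sim\tfrac12\log^2(1/t_n)\sim\tfrac12\log^2 n$, and you correctly observe that $n t_n$ and the Jacobian term $-\tfrac12\log\bigl(2\pi\Lambda''(t_n)\bigr)$ are each only $O(\log n)$. That already locks the coefficient of $\log^2 n$ at $\tfrac12$, not $\tfrac{\log 2}{2}\approx 0.347$. No further bookkeeping can change it: refining the saddle to $t_n=(\log n-\log\log n+O(1))/n$ gives $\log^2(1/t_n)=\log^2 n-2(\log n)(\log\log n)+O(\log n)$, so every correction is $o(\log^2 n)$ and the leading coefficient is rigid. In other words, the statement you were asked to prove is \emph{false}, and your argument, completed, proves it false. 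That is exactly the paper's point: Theorem~\ref{thm:KotesovecP} (via Theorem~\ref{thm:010P}) establishes the corrected asymptotic $\log(p_n/n!)\sim\tfrac12\log^2 n$, and the discussion around Tables~\ref{tab:estimationSmall} and~\ref{tab:estimationBig} attributes the spurious factor $\log 2$ in the conjecture to extrapolating from a short initial segment of the sequence, exacerbated by the extraordinarily slow convergence of $w_n^2/\log^2 n$ to $1$. You should commit to your own computation rather than leaving the leading constant open: Conjecture~\ref{conj:Kotesovec} is incorrect, and the constant is $\tfrac12$.
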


In order to explain the correct asymptotic behavior, and the likely
source of the erroneous asymptotics, we need some well-known constants.
For every $n \geq 0$, let $\gamma_n$ denote the $n$th Stieltjes
number.  In particular, let $\gamma \triangleq \gamma_0$
be the Euler-Mascheroni constant. Also, let $\operatorname{W}(z)$ denote the
Lambert $\operatorname{W}$ function. 
Theorem~\ref{thm:KotesovecP} is the corrected version of
Conjecture~\ref{conj:Kotesovec}.  It is also an exemplary prototype of
the style of asymptotic results that can be deduced from our analysis.

\begin{theorem}[Combinatorial description and asymptotic behavior for $(0,1,0)$, see \oeis{A028342}]
\label{thm:KotesovecP}
	Let $P(z)$ and $\{p_n\}_{n\geq 0}$ be given as in equation \eqref{eq:GFKotesovecWrong}.
	\begin{itemize}
	\item
		\textsc{(Combinatorial description)} The sequence
                $\{p_n\}_{n\geq 0}$ enumerates 
		the class of colored permutations by number of divisors, that is, permutations that after decomposing
		them into a product of cycles, each cycle carries a label
		that is a divisor of its corresponding length (see \S \ref{subsubsec:010}).
	\item
		\textsc{(Asymptotic behavior)} Furthermore, 
		let $c \triangleq \pi^2/12 - \gamma^2/2-2\gamma_1$ and $w_n \triangleq \operatorname{W}(e^{\gamma}n)$.
		Then (see Theorem \ref{thm:010P})
		\begin{align}	
		\label{eq:KotesovecP}
			\frac{p_n}{n!} & \sim
			\frac{(w_n/n)^{1-\gamma}}{\sqrt{2\pi\log (w_n/n)}}
			\exp\left(c+w_n + \frac{\log^2 (w_n/n)}{2}\right) . 
		\end{align}
		In particular,
		\begin{equation}
		\label{eq:KotesovecPLog}
			\log \frac{p_n}{n!} \sim \frac{\log^2n}{2}.
		\end{equation}
	\end{itemize}
\end{theorem}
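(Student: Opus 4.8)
I would obtain this symbolically. Factoring each term, $(1-z^d)^{-1/d}=\exp\bigl(\sum_{\ell\ge1}z^{d\ell}/(d\ell)\bigr)$, so that
\[
\log P(z)=\sum_{d\ge1}\sum_{\ell\ge1}\frac{z^{d\ell}}{d\ell}=\sum_{m\ge1}\Bigl(\sum_{d\mid m}\frac1m\Bigr)z^m=\sum_{m\ge1}\frac{\tau(m)}{m}\,z^m ,
\]
where $\tau(m)$ is the number of divisors of $m$. Since $(m-1)!$ counts labelled $m$-cycles, the series $\sum_m\tau(m)z^m/m=\sum_m (m-1)!\,\tau(m)\,z^m/m!$ is the exponential generating function of the ``connected'' class whose objects are a labelled cycle equipped with one of the $\tau(m)$ divisors of its length $m$. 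The labelled set construction (the exponential formula) then identifies $P(z)=\exp\bigl(\sum_m\tau(m)z^m/m\bigr)$ with the EGF of finite sets of such objects, i.e.\ permutations each of whose cycles carries a divisor of its length; hence $p_n=n!\,[z^n]P(z)$ counts exactly those colored permutations. (This also displays $P(z)$ as the ``cyclic Euler transform'' of the constant sequence $a(n)\equiv1$.)

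\textbf{Step 2: the singular expansion of $\log P$ at $z=1$.} Put $g(t)\triangleq\log P(e^{-t})=\sum_{m\ge1}\tfrac{\tau(m)}{m}e^{-mt}$; I would describe its behavior as $t\to0^+$ via the Mellin transform, which on the fundamental strip $\Re s>0$ equals $g^\ast(s)=\Gamma(s)\sum_m\tfrac{\tau(m)}{m}m^{-s}=\Gamma(s)\,\zeta(s+1)^2$. Shifting the inversion contour to the left across $s=0$, the integrand has there a pole of order three (a double pole of $\zeta(s+1)^2$, a simple pole of $\Gamma$); its residue I would evaluate from $\Gamma(1+s)=1-\gamma s+(\tfrac{\gamma^2}{2}+\tfrac{\pi^2}{12})s^2+\cdots$, $\,s\,\zeta(1+s)=1+\gamma s-\gamma_1 s^2+\cdots$, and $t^{-s}=e^{-s\log t}$, the constant term collapsing after cancellation to precisely $c=\tfrac{\pi^2}{12}-\tfrac{\gamma^2}{2}-2\gamma_1$. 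The further poles at $s=-1,-2,\dots$ are simple and contribute an asymptotic series in $t$ (leading term $O(t)$), and exponential decay of $\Gamma$ on vertical lines bounds the shifted integral, so that
\[
g(t)=\frac{\log^2 t}{2}-\gamma\log t+c+O(t)\qquad(t\to0^+),
\]
with the expansion differentiable in $t$.

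\textbf{Step 3: the saddle point extraction.} With $\Phi(t)\triangleq nt+g(t)$ and $g'(t)=(\log t-\gamma)/t+O(1)$, the saddle equation $\Phi'(t_n)=0$ reads $(\gamma-\log t_n)/t_n=n(1+o(1))$; putting $u=-\log t_n$ turns this into $(u+\gamma)e^{u+\gamma}=e^\gamma n(1+o(1))$, i.e.\ $t_n=w_n/n$ with $w_n=\operatorname{W}(e^\gamma n)$, since $w_n e^{w_n}=e^\gamma n$ forces $w_n/n=e^{\gamma-w_n}$ and $\log t_n=\gamma-w_n$. The Gaussian approximation of $[z^n]P(z)$ on the circle $|z|=e^{-t_n}$ gives $[z^n]P(z)\sim e^{\Phi(t_n)}/\sqrt{2\pi g''(t_n)}$, into which I would substitute $nt_n=w_n$, $\,g(t_n)=\tfrac12\log^2(w_n/n)-\gamma\log(w_n/n)+c+o(1)$, and $g''(t_n)=(1+\gamma-\log t_n)/t_n^2+O(1)=(1+w_n)(n/w_n)^2(1+o(1))$. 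Simplifying with $w_n/n=e^{\gamma-w_n}$ collapses the product to
\[
\frac{(w_n/n)^{1-\gamma}}{\sqrt{2\pi(1+w_n)}}\exp\Bigl(c+w_n+\tfrac12\log^2(w_n/n)\Bigr),
\]
which is \eqref{eq:KotesovecP} with the denominator read as $\sqrt{2\pi(1+w_n)}\sim\sqrt{2\pi\log(n/w_n)}$ (note $\log(w_n/n)=\gamma-w_n<0$). Finally $w_n=\log n+O(\log\log n)$ makes $\log(w_n/n)=-\log n+O(\log\log n)$, so the term $\tfrac12\log^2(w_n/n)\sim\tfrac12\log^2 n$ dominates the subleading contributions $(1-\gamma)\log(w_n/n)$, $w_n$, $c$ and $\log(1+w_n)$, which yields \eqref{eq:KotesovecPLog}.

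\textbf{Main obstacle.} The genuinely hard point is making Step 3 rigorous in this degenerate regime: $\zeta(s+1)^2$ has its rightmost singularity at $s=0$ (a double pole), not a simple pole at some $\alpha>0$, so Meinardus' theorem does not apply and $\log P(e^{-t})$ grows only like $\log^2(1/t)$, far slower than for ordinary partitions. One must choose the major-arc width $\delta_n$ so that the quadratic part of $\Phi$ dominates the cubic part there, and for the minor arcs establish a uniform lower bound showing $\Re\bigl(g(t_n)-g(t_n-i\phi)\bigr)=\sum_m\tfrac{\tau(m)}{m}e^{-mt_n}(1-\cos m\phi)$ is large for $\delta_n\le|\phi|\le\pi$ despite the smallness of the coefficients $\tau(m)/m$ — equivalently, verifying a Hayman-type admissibility for $P$. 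This uniform estimate is exactly what the general saddle point theorem of the paper supplies (Theorem~\ref{thm:010P}), and the statement above is its $(0,1,0)$ instance.
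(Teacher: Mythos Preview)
Your proof is correct and follows essentially the same approach as the paper: the combinatorial description via the exponential formula matches \S\ref{subsubsec:010}, and the asymptotic analysis---Mellin transform $g^\ast(s)=\Gamma(s)\zeta(s+1)^2$, residue at $s=0$ giving the polynomial $c(z)$ of Table~\ref{table:polynomialsP}, saddle point $t_n=w_n/n$, and Gaussian approximation---is exactly the route taken in the proof of Theorem~\ref{thm:010P}. Your remark that the denominator should be $\sqrt{2\pi(1+w_n)}\sim\sqrt{2\pi\log(n/w_n)}$ rather than $\sqrt{2\pi\log(w_n/n)}$ (which is negative) is a valid correction to the stated formula, and your honest flagging of the minor-arc estimate as the rigorous obstacle is appropriate, since the paper's proof likewise applies the saddle-point scheme formally without spelling out that verification.
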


Let us discuss a possible explanation why
Kot\u{e}\u{s}ovec made his conjecture in equation~\eqref{eq:KotesovecConjP},
which differs by a factor of $\log 2$ from the true asymptotic
behavior given in equation~\eqref{eq:KotesovecPLog}. First, consider
Figure~\ref{fig:KotesovecP}, which illustrates the three
different estimates of $\log(p_n/n!)$ given by:
\begin{itemize}
\itemsep0em
\item[(a)] Kot\u{e}\u{s}ovec's
conjecture in equation~(\ref{eq:KotesovecConjP}),
\item[(b)] the logarithm of the very accurate estimate from equation~(\ref{eq:KotesovecP}), and
\item[(c)] the accurate (but much slower to converge)
estimate from equation~(\ref{eq:KotesovecPLog}).
\end{itemize}
\begin{figure}[h]
   \centering
   \includegraphics[width=6.5in]{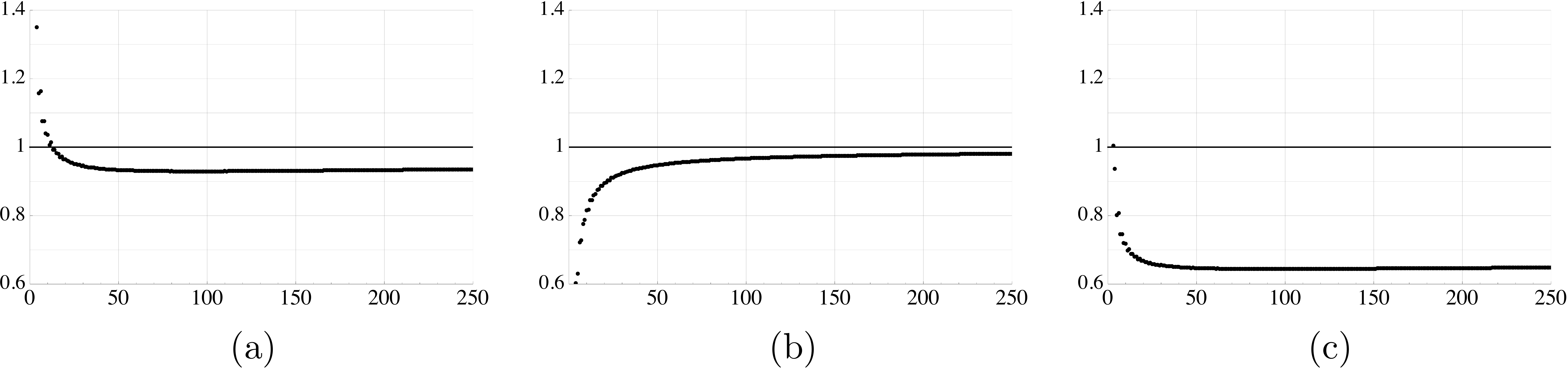} 
   \caption{The three different estimates of $\log p_n/n!$, given by
   (a) Kot\u{e}\u{s}ovec's conjecture in equation~\eqref{eq:KotesovecConjP},
   (b) the logarithm of the very accurate estimate from of equation~\eqref{eq:KotesovecP}, and
   (c) the accurate (but much slower to converge)
estimate from equation~\eqref{eq:KotesovecPLog}.}
   \label{fig:KotesovecP}
\end{figure}
In the illustration, the central item (b) shows that equation~\eqref{eq:KotesovecP}
gives a very accurate estimation. Furthermore, at first sight, the left item (a) shows
that the estimation given by equation~\eqref{eq:KotesovecConjP} is intuitively compatible with
Kot\u{e}\u{s}ovec's conjecture \ref{conj:Kotesovec}, whereas the right item (c) shows that
the estimation with equation~\eqref{eq:KotesovecPLog} is slow to
converge (but, perhaps surprisingly, will eventually converge to~1).

Computationally, obtaining the exact numerical values of the coefficients of $P(z)$ is
very challenging for indices of order greater that $10^3$.
Indeed, the entry \oeis{A028342}\
in \cite{Sloane06} has a table 
(attributed to Kot\u{e}\u{s}ovec) that lists only the first 455 values of $a_n$.
For these reasons, it is likely that Kot\u{e}\u{s}ovec simply based his
asymptotic estimate on a relatively small number of terms.
It is also likely that he was not aware
of the precise estimation given by equation~\eqref{eq:KotesovecP}.

We leverage the fact that the numeric value of $w_n$ can be obtained for very
large values of $n$.  It is easy to see 
that the logarithm of equation~\eqref{eq:KotesovecP} is
asymptotically equivalent to $w_n^2/2$, and the later is asymptotically
equivalent to $(\log ^2 n)/2$; however, this asymptotic convergence is
extremely slow.  In Table~\ref{tab:estimationSmall},
we illustrate values of $w_n^2/\log^2n$ for indices
$n \leq 10^3$, and we emphasize that (in this range) they mainly
exhibit a decreasing behavior, so it is easy to be misled about their
asymptotic growth.
\begin{table}[h!]
\centering
	\footnotesize
	\begin{tabular}{c|cccccccccc}
		$n$ & $2$ & $3$ & $4$ & $6$ & $8$ & $10$ & $20$ & $50$ & $10^2$ & $10 ^3$\\
		\hline
		$w_n^2/\log^2n$ & $2.7032$ \ & $1.5433$ \ & $1.2260$ \ & $0.9957$ \ & $0.9027$ \
		& $0.8522$ \ & $0.7605$ \ & $0.7100$ \ & $0.6944$ \ & $0.6899$\\
	\end{tabular}
	\caption{List of values of $w_n^2/\log^2n$ for some $n \leq 10^3$.}
	\label{tab:estimationSmall}
\end{table}

If we divide the last entry in this table by $\log 2$, then 
we obtain $\frac{w_{1000}^2}{(\log2) \log^2 1000} = 0.9954$,
which may have led Kot\u{e}\u{s}ovec to mistakenly 
include $\log{2}$ in his conjecture in
equation~\eqref{eq:KotesovecConjP}, based on numerical evidence.\footnote{His mathematical analysis in publications in the arXiv, like~\cite{Kotesovec16}, 
have inspired colleagues to find theoretical justifications for his
estimates; see, for instance,~\cite{HanXiong18}.}
However, in Table~\ref{tab:estimationBig} we can look at the
same values of $w_n^2/\log^2n$, for much larger indices
$n\geq 10^4$.  This allows us to see the slow asymptotic convergence
that justifies that both estimates in Theorem~\ref{thm:KotesovecP} are accurate.
Taken together with our proof (see Theorem \ref{thm:010P}),
we confirm our numerical estimate of Theorem~\ref{thm:KotesovecP}.
\begin{table}[h!]
\centering
	\footnotesize
	\begin{tabular}{c|cccccccccc}
		$n$ & $10^4$ & $10^{6}$ & $10^{8}$ & $10^{10}$ & $10^{20}$
		& $10^{50}$ & $10^{10^2}$ & $10^{10^3}$ & $10^{10^4}$ & $10^{10^5}$\\
		\hline
		$w_n^2/\log^2n$ & $0.7063$ \ & $0.7437$ \ & $0.7745$ \ & $0.7987$ \
		 & $0.8666$ \ & $0.9295$ \ & $0.9583$ \ & $0.9937$ \ & $0.9991$ \ & $0.9998$\\
	\end{tabular}
	\caption{List of values of $w_n^2/\log^2n$ for some $n \geq 10^4$.}
	\label{tab:estimationBig}
\end{table}

The rest of the paper is organized as follows.
In Section~\ref{sec:background} we present a brief background on the study
of partition functions. 
In Section~\ref{sec:combinatorial} we describe, as \emph{labeled classes},
the combinatorial structures enumerated by the (exponential) generating functions
\eqref{eq:GeneralPartitionMulti} and \eqref{eq:GeneralPartitionPower}.
In Section~\ref{sec:asymptotic}, we discuss the asymptotic analysis of
these generating functions:
First, we compute the Mellin transforms, which can be derived in a
systematic way across the $(i,j,k)$ families. In contrast, the residue
analysis needs to be customized for each triple parameter, though some
generalities can be developed.
The final asymptotic growth is obtained through saddle point method.
With this methodology, we can resolve the conjectures about the
asymptotic growth of the 6 sequences marked with~(b) or~(c) in
Table~\ref{tab:MultiPowerOEIS}.  Additionally, we discover the
asymptotic growth of the 9 sequences in OEIS that are marked with~(d) 
in Table~\ref{tab:MultiPowerOEIS}, and we provide the proofs.

\section{Background}
\label{sec:background}

\subsection{The partition problem}

Ever since the work of Hardy and Ramanujan,
there has been a continued and intensive development
of the study of partition functions, there is a vast and diverse literature
on the subject, the standard general reference is
the book of Andrews \cite{Andrews84} from 1984 and the references
therein. The area has continued developing and
we can only mention a brief list of some more recent
works that are related, up to certain extends, to our context, like
\cite{Kane06, Andrews08, BringmannMahlburg11, BringmannMahlburg13, Andrews13, BringmannMahlburg14, JoKim15}, works with more probabilistic points of view include \cite{CanfieldCorteelHitczenko01, GohHitczenko07, BodiniFusyPivoteau10, deSalvoPak19, KaneRhoades19}, connexions with cellular automata can be found in \cite{HolroydLiggetRomik03},
for other similar frameworks with $q$-series see \cite{BringmannMahlburg11}, and so forth.
There are several ways to generalizations, and in the literature several kinds of classes of functions
are so called ``weighted partitions''. For example, one kind arises by locating
a sequence $(b_n)_{n=1}^\infty$ of ``weights'' as \emph{coefficients},
namely, products of the form
\begin{align}
\label{eq:WeightedPartitionA}
	P(z)=\prod\limits_{n\geq1}\frac{1}{1-b_nz^n}
	\qquad \textnormal{ and } \qquad
	Q(z)=\prod\limits_{n\geq1}(1+b_nz^n)
\end{align}
(see e.g. \cite{Wright34, BringmannMahlburg13, BringmannMahlburg14, Rolen17, Chern18, KimKimLovejoy20, KimKimLovejoy21}).
Another kind arises by locating a sequence $(a_n)_{n\geq 1}$ of ``weights'' as
\emph{exponents}, namely, products of the form
\begin{align}
\label{eq:WeightedPartitionB}
	P(z)=\prod\limits_{n\geq1}\left(\frac{1}{1-z^n}\right)^{a_n}
	\qquad \textnormal{ and } \qquad
	Q(z)=\prod\limits_{n\geq1}(1+z^n)^{a_n}
\end{align}
(see e.g. \cite{Wright31, Wright34-2, Brigham50, Meinardus54, Yang00, GohHitczenko07, MadritschWagner10, HanXiong18, Mutafchiev18}).
A third kind could arise when putting the previous two kinds together, namely,
products of the form
\begin{align}
\label{eq:WeightedPartitionB}
	P(z)=\prod\limits_{n\geq1}\left(\frac{1}{1-b_nz^n}\right)^{a_n}
	\qquad \textnormal{ and } \qquad
	Q(z)=\prod\limits_{n\geq1}(1+b_nz^n)^{a_n},
\end{align}
etc.
In this paper, products of the second kind arise when $j=0$ (see Theorem \ref{thm:ClassPOrdi}),
and also when the triple is $(i,j,k) = (0,1,0)$ (see Theorem \ref{thm:ClassPExpo}),
thus, in these cases there are certain sequences of weights as exponents.
There have been several works that, under some general hypothesis,
resolve the partition problem for families of sequences of weights as exponents
(at least the first order asymptotic growth, and mostly focused, in fact,
in the $\mathcal P$-form rather than the $\mathcal Q$-form).
Probably the most famous result for sequences of weights as exponents is Meinardus' Theorem
\cite{Meinardus54} from 1954, which is also explained in depth in
Theorem 6.2 of \cite[Chapter~6, pp.~88]{Andrews84}.
Even before Meinardus, there are other works, such as Brigham \cite{Brigham50}
in 1950, that consider weighted partitions as a general framework 
for many types of partition problems.  For example, he mentions 
well-known works of Wright \cite{Wright31, Wright34-2} from 1931 and 1934
as particular cases. Meinardus' Theorem has been generalized, as in
\cite{GranovskyStarkErlihson08}, see also \cite{MadritschWagner10},
and for further works in this context see e.g. \cite{BellBurris06, Yang00, Yang01}.
Nevertheless, the immense generalities make many cases to fall beyond the assumptions
of well-known results like Meinardus' Theorem and its generalizations,
and some open conjectures on several
partition problems that have been considered remain,
as far as we know, unsolved, for instance see
\oeis{A028342} \ and \oeis{A318413}\ in OEIS \cite{Sloane06}.
For example, in the case of the former, which is when
$(i,j,k) = (0,1,0)$, what fails in Meinardus' Theorem is the fact that
a pole of $\zeta(s+1)$ is located at $0$.
In the case of the later, or more generally, when the triple is $(i,0,k)$, then
it is necessary to analyze the Dirichlet series $\sum_{n\geq 1}\frac{\psi(n)}{n^s}$,
where $\psi(n)$ is defined in Theorem \ref{thm:ClassPExpo}.  The
Meinardus style of argument is designed for the case of partitions of
integers, that is, when the triple is $(i,j,k) = (0,0,1)$
(for this case, see again \cite[Chapter~6]{Andrews84}).

Furthermore, several other instances of open conjectures that fall
within the scope of equations~\eqref{eq:GeneralPartitionMulti} 
and \eqref{eq:GeneralPartitionPower}
are found in \oeis{A168243}, \oeis{A107742}, \oeis{A280541},
\oeis{A318414}, and we will present their resolutions.
In virtue of Theorem \ref{thm:ClassPExpo}, the general form of the ``weighted partition function'' that we are
considering here has the form
\begin{align}
\label{eq:generalForm}
	\prod\limits_{n\geq 1}\exp\left(\frac{1}{n}\log\frac{1}{1-z^n}\right)^{a_n} 
\end{align}
(the sequence $(a_n)$ that yields equations \eqref{eq:GFPartitions} and \eqref{eq:GFPartitionsDistinct}
is $a_n=n$).
Thus one of our goals here is to bring a source that resolves
the conjectures regarding the partition problem in all these cases,
in addition to verifying all claims.

\subsection{Divisor functions}

Recall that the well-known \emph{sum of divisors} is the divisor function defined for every $n \geq 1$ by
\begin{align}
	\sigma_x(n) := \sum\limits_{d|n}d^x.
\end{align}
For example, $\sigma_0(n)$ is the \emph{number of divisors} of $n$
and $\sigma(n) := \sigma_1(n)$ is the \emph{sigma function}, i.e. the sum of the positive divosors of $n$.
Also, for every $k\geq 1$, the $k$-\emph{fold number of divisors} is the divisor function
defined 
 by
\begin{align}
	\tau_{k}(n) := \sum\limits_{d_1\cdots d_k=n} 1.
\end{align}
In particular, $\tau_1(n)= 1$ and $\tau(n):=\tau_2(n)=\sigma_0(n)$ (we let $\tau_0(n):=1$).
Observe that
\begin{align}
	\tau_{k+1}(n) := \sum_{d|n} \tau_{k}(n/d).
\end{align}
Divisor functions are central in analytic number theory,
in particular because of the well-known identity
\begin{align}
	\zeta(s)^k = \sum\limits_{n\geq 1} \frac{\tau_k(n)}{n^s},
\end{align}
i.e., $\zeta(s)^k$ is the Dirichlet series of $\big(\tau_k(n)\big)_{n\geq 1}$.
For some recent developments see
\cite{Nguyen21, DrappeauTopacogullari19, Blomer18, Lester16}.

\section{Combinatorial specifications}
\label{sec:combinatorial}

In this section we present the general combinatorial specifications that
yield the exponential generating functions given in equations \eqref{eq:GeneralPartitionMulti}
and \eqref{eq:GeneralPartitionPower}. We will focus on the combinatorial class $\mathcal P$
and at the end of the section we will discuss the corresponding class $\mathcal Q$.
When $j=0$ (i.e. there are no denominators), we
get two alternative specifications that yield the same exponential generating functions,
hence we obtain two isomorphic kinds of labeled combinatorial classes.
We will describe several particular examples. We will be interested in coloring several objects
with a countable set of different colors, as depicted in Figure \ref{fig:colors}.
But first let us start with descriptions of
certain basic classes in a way that will be useful throughout. 
\begin{figure}[h] 
   \centering
   \includegraphics[width=1.3in]{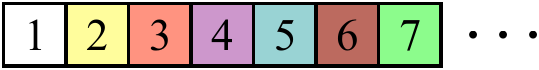} 
   \caption{A countable list of different colors, labelled by $\{1, 2, 3, 4, 5, 6, 7, \ldots\}$.}
   \label{fig:colors}
\end{figure}
\subsection{Permutations, integer partitions, Young tableaux, and divisor diagrams}

The combinatorial class $\mathcal S$ of all permutations can be specified as sets of labelled cycles, i.e.
\begin{align}
\label{eq:permutationSpec}
	\mathcal S(\mathcal Z) = \set \big( \cyc(\mathcal Z)\big),
\end{align}
which translates into the exponential generating function
$$
	\exp\left(\log\frac{1}{1-z}\right) = \frac{1}{1-z}.
$$
For example, a permutation like
\setlength{\tabcolsep}{0.1em}
\begin{align}
\label{eq:permutationEx}
\left(\begin{tabular}{cccccccccccccccccccccccccccccccccc}
	1 & 2 & 3 & 4 & 5 & 6 & 7 & 8 & 9 & 10 & 11 & 12 & 13 & 14 & 15 & 16 &
	17 & 18 & 19 & 20 & 21 & 22 & 23 & 24 & 25 & 26 & 27 & 28 & 29 & 30 &
	31 & 32 & 33 & 34\\
	5 & 10 & 19 & 18 & 27 & 8 & 12 & 32 & 15 & 16 & 9 & 31 & 34 & 11 & 29
	& 13 & 17 & 22 & 30 & 2 & 14 & 23 & 4 & 26 & 28 & 33 & 1 & 24 & 6 & 21
	& 7 & 3 & 25 & 20
\end{tabular}\right)
\end{align}
decomposes as a set of cycles coming from its decomposition as a product of cycles, namely,
\begin{align}
	(1 \thinspace 5\thinspace 27)
	(2 \thinspace 10\thinspace 16\thinspace 13\thinspace 34\thinspace 20)
	(3 \thinspace 19\thinspace 30\thinspace 21\thinspace 14\thinspace
	11\thinspace 9\thinspace 15\thinspace 29\thinspace 6\thinspace 8\thinspace 32)
	(4\thinspace 18\thinspace 22\thinspace 23)
	(7\thinspace 12\thinspace 31)
	(17)
	(24\thinspace 26\thinspace 33\thinspace 25\thinspace 28).
\end{align}
In general, we write cycles starting with the ``leader'', and 
we can always ``re-accommodate'' the product of cycles monotonically in a unique way
according to their lengths and, within cycles of equal length, according to the leader.
For example,
\begin{align}
\label{eq:reacommodate}
	(3 \thinspace 19\thinspace 30\thinspace 21\thinspace 14\thinspace
	11\thinspace 9\thinspace 15\thinspace 29\thinspace 6\thinspace 8\thinspace 32)
	(2 \thinspace 10\thinspace 16\thinspace 13\thinspace 34\thinspace 20)
	(24 \thinspace 26\thinspace 33\thinspace 25\thinspace 28)
	(4 \thinspace 18\thinspace 22\thinspace 23)
	(1 \thinspace 5\thinspace 27)	
	(7 \thinspace 12\thinspace 31)
	(17).
\end{align}
Any permutation is associated to an integer partition of its size, namely
the lengths of its cycles, e.g. for the permutation above we have the partition
$34 = 12 + 6 + 5 + 4 + 3 + 3 + 1$. It is customary to represent integer partitions
as \emph{Young diagrams}, i.e. as monotone arrangements of horizontal blocks formed
by consecutive boxes with lengths according to the summands in the partition.
See item (a) in Figure \ref{fig:Young} for the example above
(observe that, for reference, the associated partition
is written to the left of the Young diagram).
\begin{figure}[h] 
   \centering
   \includegraphics[width=6.5in]{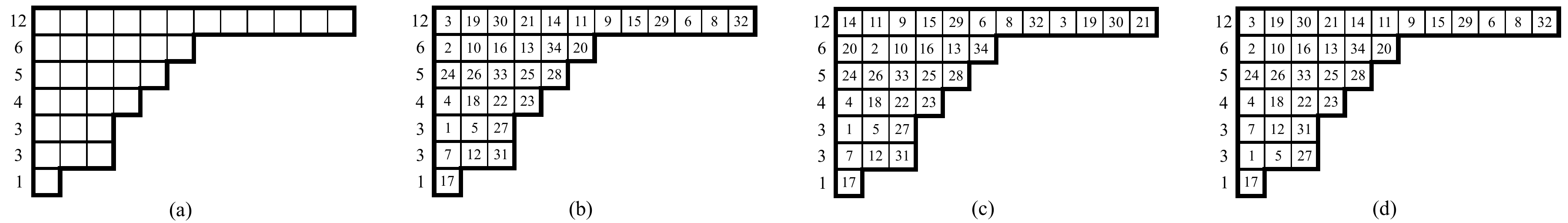} 
   \caption{Young Tableaux: (a) A Young diagram. (b) A Young tableau.
   (c) A Young tableau obtained from (b) by cyclically shifting some rows.
   (d) A Young tableau obtained from (b) by switching rows of the same size.}
   \label{fig:Young}
\end{figure}
A \emph{Young tableau} on a Young diagram of size $n$ is a \emph{well-labelling}
of the boxes of the Young diagram with the set $\{1, \ldots , n\}$ (then, 
all the $n$ symbols are required to be used exactly once).
For example, (b) in Figure \ref{fig:Young} is the unique representation
(described before)
of the partition given as in equation \eqref{eq:reacommodate}.
In fact, every Young tableau has an associated permutation which has
the labelled rows as its decomposition in cycles.
Thus, different labelling of Young diagrams can yield Young tableaux that have
the same associated permutation, for example in Figure \ref{fig:Young},
items (b) and (c) differ only because some rows were shifted cyclically,
that is, the permutation remains invariant under the action $\alpha$
by cyclic shifts, that is, $\alpha(a_1,\dots , a_n) = (a_2, \ldots a_{n}, a_1)$ for every
$(a_1, \ldots,a_n) \in \mathcal A^n$.
A class of Young tableaux is \emph{modular} if
its elements are regarded as equivalence classes of the equivalence
relation defined by the $\alpha$-orbits.
Similarly, in Figure \ref{fig:Young},
items (b) and (d) differ only by a permutation of the two rows of length three,
the other generic instance that make two different Young tableaux induce the same permutation.
Thus there is also an action $\sigma$ that for every $n\geq 1$ acts by permutations 
on the set of $n$-blocks, and the original permutation is also invariant
under this $\sigma$-action. A class of Young tableaux is \emph{symmetric} if its elements are regarded
as equivalence classes of the orbital equivalence relation induced by the $\sigma$-action.
Thus for example, the class $\mathcal S$ of all permutations is represented by the class of
modular \emph{and} symmetric Young tableaux.

The \emph{divisor diagram} associated to a Young diagram is
again an arrangement of blocks of boxes arranged as rows beside
the Young diagram in a way that blocks of length $n$ in the Young
diagram correspond to blocks of length $\tau(n)$ in the divisor diagram
(it is like the ``image'' of the Young diagram under the number of divisor function $\tau(n)$,
thus, in general, the arrangement in the divisor diagram is not monotone).
See Figure \ref{fig:YoungDivisorDiagrams}.
\begin{figure}[htbp] 
   \centering
   \includegraphics[width=2.8in]{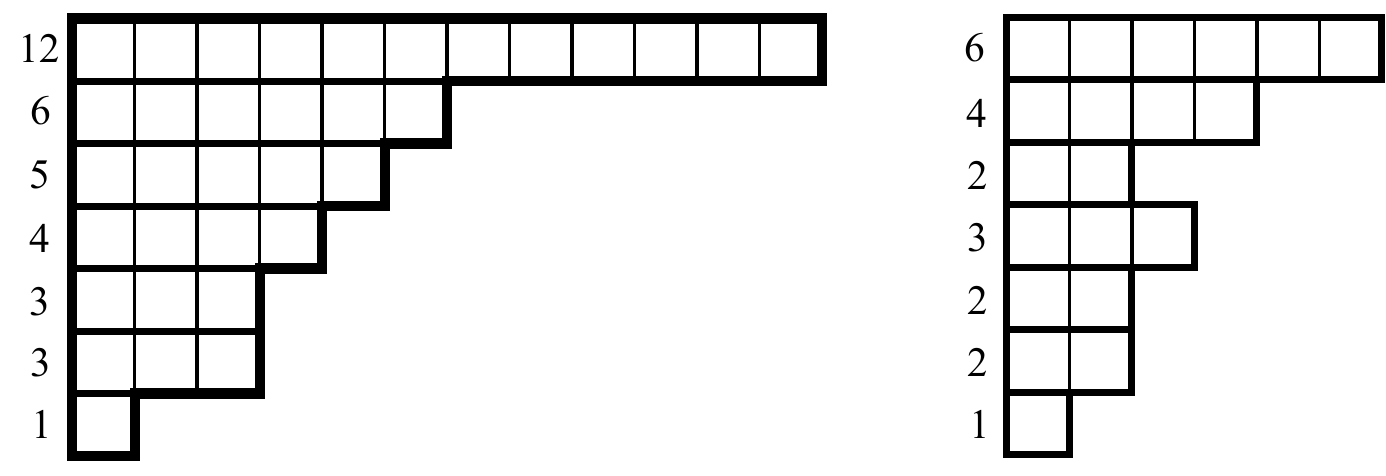} 
   \caption{A Young diagram together with its divisor diagram.}
   \label{fig:YoungDivisorDiagrams}
\end{figure}

Henceforth,
the divisors of a positive integer $n$ will be written as
$1=p_1<\ldots < p_{\tau(n)}=n$.

\begin{remark}
\label{rem:divisorDiagram}
Observe that for any box in the $l$th column of the divisor diagram,
if it belongs to a block of size $n$, then $\tau(p_l)=l$.
\end{remark}

\subsection{General combinatorial specifications for $\mathcal P$}

Let us formally give the two main global combinatorial specifications.

\begin{theorem}
\label{thm:ClassPExpo}
For every admissible triple $(i,j,k)$, define for every positive integer $n$ the arithmetic function
\begin{equation}
	\chi(n) := \begin{cases}
		n^2\cdot \tau_i(n) \quad & \textnormal{for } (i\geq 1,0,0) \smallskip \\
		n \sum\limits_{\ell=1}^{\tau(n)} p_\ell \cdot \tau_i(p_\ell) \cdot \tau_k(n/p_\ell) \quad & \textnormal{for } (i\geq 1,0,k \geq 1) \smallskip \\ 
		\sum\limits_{\ell=1}^{\tau(n)} p_\ell^2 \cdot \tau_i(p_\ell) \cdot \tau_j(n/p_\ell) \quad & \textnormal{for } (i\geq 1,j\geq 1, 0) \smallskip \\
		\sum\limits_{\ell=1}^{\tau(n)} \sum\limits_{m=1}^{\tau(n/p_{\ell})} p_\ell^2 q_m \cdot \tau_i(p_\ell)  \cdot \tau_k(q_m) \cdot \tau_j\big(n/(p_\ell q_m)\big) \quad & \textnormal{for } (i\geq 1,j\geq 1,k \geq 1) \\
		n\cdot \tau_k(n)  \quad & \textnormal{for } (0,0,k \geq 1) \smallskip \\  
		\sum\limits_{\ell=1}^{\tau(n)} p_\ell \cdot \tau_k(p_\ell) \cdot \tau_j(n/p_\ell) \quad & \textnormal{for } (0,j\geq 1,k \geq 1) \smallskip \\
		\tau_j(n) \quad & \textnormal{for } (0,j\geq 1,0) \smallskip \\
\end{cases}
\end{equation}
where 
$q_m | \frac{n}{p_\ell}$ for $m=1,\ldots,\tau(n/p_\ell)$ are the divisors of $n/p_\ell$.
Then $P(z)$ is the \emph{cyclic Euler transform} of $\chi(n)$, that is,
\begin{align}
\label{eq:generalizedEulerTransformation}
	P(z) = \prod_{n\geq 1} \exp\left( \sum\limits_{\ell=1}^\infty \frac{z^{n \ell }}{n\ell} \right)^{\chi(n)}.
\end{align}
\end{theorem}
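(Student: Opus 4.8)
The plan is to strip the statement down to a purely arithmetic identity and then verify it by sorting factorizations. First, since $\sum_{\ell\geq 1} z^{n\ell}/(n\ell) = \tfrac1n\log\tfrac1{1-z^n}$, each Euler factor simplifies to $\exp\bigl(\tfrac1n\log\tfrac1{1-z^n}\bigr)^{\chi(n)} = (1-z^n)^{-\chi(n)/n}$, so the right-hand side of \eqref{eq:generalizedEulerTransformation} is $\prod_{n\geq1}(1-z^n)^{-\chi(n)/n}$. On the left-hand side I would regroup the $(i+j+k)$-fold product in \eqref{eq:GeneralPartitionMulti} by the common value $m$ of the exponent $N := n_1\cdots n_i d_1\cdots d_j e_1\cdots e_k$; because $[z^m]$ of the product involves only the finitely many factors with $N\le m$, this rearrangement is legitimate in $\mathbb{Z}[[z]]$, and it gives $P(z) = \prod_{m\geq1}(1-z^m)^{-E(m)}$ with
\begin{align*}
	E(m) := \sum_{\substack{n_1,\dots,n_i,\,d_1,\dots,d_j,\,e_1,\dots,e_k\geq1 \\ n_1\cdots n_i d_1\cdots d_j e_1\cdots e_k = m}} \frac{n_1\cdots n_i}{d_1\cdots d_j}.
\end{align*}
Comparing the two product expansions factor by factor, the theorem reduces to the claim that $\chi(m) = m\,E(m)$ for every $m\geq1$.

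To evaluate $E(m)$ I would split the factorization $m=N$ according to $p := n_1\cdots n_i$ (which runs over all divisors $p_1<\dots<p_{\tau(m)}$ of $m$) and, when $k\geq1$, $q := e_1\cdots e_k$ (which runs over the divisors of $m/p$), leaving $d_1\cdots d_j = m/(pq)$; the number of ordered factorizations of an integer into $r$ positive factors is $\tau_r$, with the conventions $\tau_0\equiv\tau_1\equiv 1$. This yields, for instance, $E(m)=\tau_k(m)$ and hence $\chi(m)=m\,\tau_k(m)$ in the case $(0,0,k\geq1)$; $E(m)=\tau_j(m)/m$ and hence $\chi(m)=\tau_j(m)$ in the case $(0,j\geq1,0)$; $E(m)=m\,\tau_i(m)$ and hence $\chi(m)=m^2\tau_i(m)$ in the case $(i\geq1,0,0)$; and, in the fully general case $(i\geq1,j\geq1,k\geq1)$, one obtains
\begin{align*}
	E(m) = \frac1m\sum_{p\mid m}\,\sum_{q\mid m/p} p^2 q\,\tau_i(p)\,\tau_k(q)\,\tau_j\!\bigl(m/(pq)\bigr),
\end{align*}
which is precisely $\chi(m)/m$. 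The three mixed cases $(i\geq1,0,k\geq1)$, $(i\geq1,j\geq1,0)$ and $(0,j\geq1,k\geq1)$ come out the same way, keeping only the relevant subset of the sums and the relevant weight (respectively $p$, $p^2/m$, and $q/m$). Matching each resulting expression against the corresponding line in the piecewise definition of $\chi$ finishes the proof.

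There is no real obstacle here; the work is bookkeeping, and the only points demanding care are (i) the justification of the infinite-product rearrangement, which is routine in the formal-power-series topology since each coefficient depends on finitely many factors, and (ii) keeping the empty-product conventions straight when one or more of $i,j,k$ is zero (an empty product of $n$'s contributes the scalar $1$, and an empty product of $d$'s makes $d_1\cdots d_j=1$, so the corresponding weight and $\tau$-factor collapse correctly). One could instead argue at the level of $\log P(z)$, collecting coefficients of $z^m$ and invoking $\zeta(s)^r=\sum_n\tau_r(n)n^{-s}$ together with Möbius inversion, but the direct regrouping of the product is the cleanest route and makes all seven formulas for $\chi$ transparent.
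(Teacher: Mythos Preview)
Your proof is correct and follows essentially the same regrouping argument as the paper: both collect the $(i{+}j{+}k)$-fold product according to the common value $N=n_1\cdots n_i\,d_1\cdots d_j\,e_1\cdots e_k$ and count ordered factorizations with $\tau_r$. The only cosmetic difference is that the paper performs the manipulation inside the exponential form (multiplying and dividing the log-series by $N$ so that the weight $(n_1\cdots n_i)^2 e_1\cdots e_k$ appears directly), while you reduce both sides to $\prod_m(1-z^m)^{-\text{exponent}}$ and verify $\chi(m)=m\,E(m)$ case by case.
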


\begin{proof}
We have
\begin{align}
	P(z) & = \prod \left(
		\frac{1}{1-z^{n_1\cdots n_{i}d_{1}\cdots d_{j}e_{1}\cdots e_{k}}}
	\right)^{n_1\cdots n_i/d_{1}\cdots d_{j}} \\
	& = \prod \exp\left(\log \left( \left(
		\frac{1}{1-z^{n_1\cdots n_{i}d_{1}\cdots d_{j}e_{1}\cdots e_{k}}}
	\right)^{n_1\cdots n_i/d_{1}\cdots d_{j}} \right)\right)\\
        & = \prod \exp\left(\frac{1}{d_1\cdots d_j}\log \left( \frac{1}{1-z^{n_1\cdots n_{i}d_{1}\cdots d_{j}e_{1}\cdots e_{k}}} \right)\right)^{n_1\cdots n_i}\\ 
        & = \prod \exp\left(\frac{1}{d_1\cdots d_j} \sum\limits_{\ell=1}^\infty \frac{z^{n_1\cdots n_{i}d_{1}\cdots d_{j}e_{1}\cdots e_{k} \ell }}{\ell} \right)^{n_1\cdots n_i}\\
        & = \prod \exp\left(n_1\cdots n_{i}e_{1}\cdots e_{k} \sum\limits_{\ell=1}^\infty \frac{z^{n_1\cdots n_{i}d_{1}\cdots d_{j}e_{1}\cdots e_{k} \ell }}{n_1\cdots n_{i}d_{1}\cdots d_{j}e_{1}\cdots e_{k}\ell} \right)^{n_1\cdots n_i}\\
        & = \prod \exp\left( \sum\limits_{\ell=1}^\infty \frac{z^{n_1\cdots n_{i}d_{1}\cdots d_{j}e_{1}\cdots e_{k} \ell }}{n_1\cdots n_{i}d_{1}\cdots d_{j}e_{1}\cdots e_{k}\ell} \right)^{(n_1\cdots n_i)^2\cdot e_{1}\cdots e_{k}}
\end{align}
and hence the result follows.
\end{proof}

\begin{corollary}
The exponential generating function in equation \eqref{eq:GeneralPartitionMulti} comes from the specification
\begin{align}
\label{eq:specGeneral}
	\mathcal P(\mathcal Z) = \prod\limits_{n=1}^\infty \seq_{\chi(n)}\left(\set\left(\bigcup_{\ell = 1}^{\infty}\cyc_{n\ell} (\mathcal Z)\right)\right).
\end{align}
\end{corollary}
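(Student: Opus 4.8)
The plan is to compute the exponential generating function of the class specified in~\eqref{eq:specGeneral} by applying, one constructor at a time, the standard translation dictionary for labelled combinatorial classes (see \cite{FlajoletSedewick09}), and then to identify the resulting series with $P(z)$ by invoking Theorem~\ref{thm:ClassPExpo}. The only genuine content is a bookkeeping with the symbolic method; Theorem~\ref{thm:ClassPExpo} already does the arithmetic of matching the series to the product formula~\eqref{eq:GeneralPartitionMulti}.

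Working from the inside out: a labelled cycle of fixed length $m$ is enumerated by the EGF $z^m/m$ (there are $(m-1)!$ cycles on an $m$-element labelled set, and $(m-1)!/m!=1/m$), so $\cyc_{n\ell}(\mathcal Z)$ corresponds to $z^{n\ell}/(n\ell)$; a disjoint union translates into a sum, so $\bigcup_{\ell\geq1}\cyc_{n\ell}(\mathcal Z)$ has EGF $\sum_{\ell\geq1}z^{n\ell}/(n\ell)$. Since this union has minimal size $n>0$, the labelled $\set$ constructor applies and exponentiates its argument, whence $\set\bigl(\bigcup_{\ell\geq1}\cyc_{n\ell}(\mathcal Z)\bigr)$ has EGF $\exp\bigl(\sum_{\ell\geq1}z^{n\ell}/(n\ell)\bigr)$. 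A sequence of fixed length $\chi(n)$ raises the EGF to the power $\chi(n)$, and the labelled product over $n$ multiplies the factors. Assembling these, the EGF of the class in~\eqref{eq:specGeneral} is
\[
\prod_{n\geq1}\exp\left(\sum_{\ell\geq1}\frac{z^{n\ell}}{n\ell}\right)^{\chi(n)},
\]
which is precisely the right-hand side of~\eqref{eq:generalizedEulerTransformation}; by Theorem~\ref{thm:ClassPExpo} it equals $P(z)=P^{\left<i,j,k\right>}(z)$ from~\eqref{eq:GeneralPartitionMulti}, which is the assertion.

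The only step that calls for an argument rather than mere bookkeeping --- and so the ``hard part,'' although it remains routine --- is verifying that the infinite labelled product in~\eqref{eq:specGeneral} is a legitimate combinatorial class. I would note that each factor $\seq_{\chi(n)}\bigl(\set(\bigcup_{\ell\geq1}\cyc_{n\ell}(\mathcal Z))\bigr)$ possesses a unique object of size $0$, namely the length-$\chi(n)$ sequence all of whose entries are the empty set (here $\chi(n)\geq1$ for every admissible triple, by inspection of its definition), and its smallest object of positive size has size $n$, produced by a single $n$-cycle in one coordinate. Because $n\to\infty$, only finitely many factors affect the count at any fixed size, so the product is well defined as a class; equivalently, the factor EGFs satisfy $B_n(z)=1+O(z^n)$, so $\prod_{n\geq1}B_n(z)$ converges in $\mathbb C[[z]]$ and the term-by-term translation above is justified.
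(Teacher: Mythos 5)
Your proposal is correct and follows exactly the route the paper intends: translate the specification term by term via the labelled symbolic dictionary ($\cyc_m \mapsto z^m/m$, union $\mapsto$ sum, $\set \mapsto \exp$, $\seq_{\chi(n)} \mapsto$ $\chi(n)$th power, product over $n$ $\mapsto$ product of factors) to obtain the cyclic Euler transform, and then match it to $P(z)$ via Theorem~\ref{thm:ClassPExpo}. The paper states the corollary without proof as an immediate consequence; your added check that the infinite labelled product is a well-defined class (each factor being $1+O(z^n)$ and $\chi(n)\geq 1$) is a harmless and correct bit of diligence, not a different method.
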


Only when $j>0$ we are forced to consider cycles.
Thus the case $j=0$ deserves its own attention. In fact we will see now,
in the result that follows, that in this case $\mathcal P$ admits a specification
that translates into a representation of $P(z)$ with the form given in
Meinardus' Theorem (thus the later could be applied if all its hypothesis are satisfied);
the proof of this result and the next two that follow are now straightforward.

\begin{theorem}
\label{thm:ClassPOrdi}
For every positive integer $n$, let
\begin{equation}
	\psi(n) := \begin{cases}
		n \cdot  \tau_i(n) \quad & \textnormal{for } (i \geq 1, 0, 0) \smallskip \\ 
		\tau_k(n) \quad & \textnormal{for } (0, 0, k\geq 1) \smallskip \\ 
		\sum\limits_{\ell=1}^{\tau(n)} p_\ell \cdot \tau_i(p_\ell) \cdot \tau_k(n/p_\ell) \quad & \textnormal{for } (i\geq 1, 0, k\geq 1).
\end{cases}
\end{equation}
If $j=0$, then $P(z)$ is the Euler transform of $\psi(n)$, that is,
\begin{align}
\label{eq:EulerTransformation}
	P(z) & = \prod \left(
		\frac{1}{1-z^{n_1\cdots n_{i}e_{1}\cdots e_{k}}}
	\right)^{n_1\cdots n_i} 
	= \prod_{n\geq 1} \left(
		\frac{1}{1-z^{n}}
	\right)^{\psi(n)} .
\end{align}
\end{theorem}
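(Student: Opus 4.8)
The plan is to prove the two displayed equalities by nothing more than collecting terms in the infinite product. For the first equality, set $j=0$ in the defining formula \eqref{eq:GeneralPartitionMulti}: the denominator product $d_1\cdots d_j$ becomes empty (equal to $1$), so the factor associated with a choice of the $n$'s and $e$'s is $\bigl(1-z^{n_1\cdots n_i e_1\cdots e_k}\bigr)^{-n_1\cdots n_i}$. For the second equality, I would group, for each fixed $m\geq 1$, all of these factors whose exponent of $z$ equals $m$. Since every factor is $1 + O\bigl(z^{n_1\cdots n_i e_1\cdots e_k}\bigr)$ with $n_1\cdots n_i e_1\cdots e_k\geq 1$, the product is a well-defined formal power series in which only finitely many factors affect any given coefficient; hence the factors may be freely rearranged and grouped (equivalently, one may invoke absolute convergence for $|z|<1$). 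Collecting the factors with common exponent $m = n_1\cdots n_i e_1\cdots e_k$ yields
\begin{align}
P(z) = \prod_{m\geq 1}\bigl(1-z^m\bigr)^{-\Psi(m)}, \qquad
\Psi(m) := \sum_{\substack{n_1,\dots,n_i,e_1,\dots,e_k\geq 1\\ n_1\cdots n_i e_1\cdots e_k = m}} n_1\cdots n_i,
\end{align}
so the theorem reduces to checking that $\Psi(m) = \psi(m)$ in each of the three regimes.

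When $(i\geq 1,0,0)$ there are no $e$-indices and the constraint reads $n_1\cdots n_i = m$; every summand of $\Psi(m)$ then equals $m$, and the number of ordered $i$-tuples of positive integers with product $m$ is $\tau_i(m)$, so $\Psi(m) = m\,\tau_i(m)$. When $(0,0,k\geq 1)$ there are no $n$-indices, the weight $n_1\cdots n_i$ is the empty product $1$, and the number of ordered $k$-tuples with product $m$ is $\tau_k(m)$, so $\Psi(m) = \tau_k(m)$. For the mixed regime $(i\geq 1,0,k\geq 1)$, put $a = n_1\cdots n_i$ and $b = e_1\cdots e_k$; then $ab = m$, so $a$ runs over the divisors $p_1<\dots<p_{\tau(m)}$ of $m$ with $b = m/a$, and summing over the $\tau_i(a)$ factorizations of $a$ and the $\tau_k(m/a)$ factorizations of $m/a$ gives $\Psi(m) = \sum_{\ell=1}^{\tau(m)} p_\ell\,\tau_i(p_\ell)\,\tau_k(m/p_\ell)$, which is the $\psi(m)$ recorded in the statement. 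This closes the argument.

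I do not expect a genuine obstacle here: the proof is pure bookkeeping, and it is essentially the specialization to $\log\frac{1}{1-z^n}$ of the manipulation already performed in the proof of Theorem~\ref{thm:ClassPExpo} (it is even a little simpler, since here the cyclic series expansion of $\log\frac{1}{1-z^n}$ is not needed). The only two points that deserve a word of care are (i) the legitimacy of reindexing the infinite product, which is handled by the formal-power-series remark above, and (ii) the identification of the ordered-factorization counts with the divisor functions $\tau_i,\tau_k$ recalled in Section~\ref{sec:background}; both are routine, so I would keep the written proof as short as the authors' remark that it "is now straightforward" suggests.
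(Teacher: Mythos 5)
Your proof is correct and matches what the paper intends: the authors declare the proof ``straightforward'' without writing it out, and the straightforward argument is exactly yours---set $j=0$, collect the factors of $P(z)$ by common exponent $m=n_1\cdots n_i e_1\cdots e_k$, and identify the resulting weight $\Psi(m)=\sum_{n_1\cdots n_i e_1\cdots e_k=m} n_1\cdots n_i$ with $\psi(m)$ in each of the three regimes using the definition $\tau_\ell(m)=\#\{(d_1,\dots,d_\ell): d_1\cdots d_\ell=m\}$. The convergence/rearrangement remark and the case-by-case bookkeeping are both handled correctly, so nothing is missing.
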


\begin{corollary}
The exponential generating function in equation \eqref{eq:GeneralPartitionPower} comes from the specification
\begin{align}
\label{eq:specj0}
	\mathcal P(\mathcal Z) = \prod\limits_{n=1}^\infty \seq_{\psi(n)}\big(\seq(\mathcal Z^n)\big).
\end{align}
\end{corollary}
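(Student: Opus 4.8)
The plan is to read off the exponential generating function of the specification \eqref{eq:specj0} with the standard symbolic dictionary for labelled classes (see \cite{FlajoletSedewick09}), and then to recognize it as $P(z)$ via Theorem~\ref{thm:ClassPOrdi}; as the surrounding text already notes, the argument is routine. Throughout I fix $n\geq 1$ and work from the inside of the $n$-th factor outward.

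The $n$-fold labelled product $\mathcal Z^n=\mathcal Z\star\cdots\star\mathcal Z$ has exponential generating function $z^n$, which has zero constant term, so the sequence construction is admissible and $\seq(\mathcal Z^n)$ has exponential generating function $\sum_{r\geq 0}z^{nr}=(1-z^n)^{-1}$. The cardinality-restricted construction $\seq_{\psi(n)}$ is the $\psi(n)$-fold labelled product, hence it multiplies generating functions; so $\seq_{\psi(n)}\big(\seq(\mathcal Z^n)\big)$ has exponential generating function $(1-z^n)^{-\psi(n)}$. I would also record here that, in each of the three regimes $(i\geq 1,0,0)$, $(0,0,k\geq 1)$, $(i\geq 1,0,k\geq 1)$ defining $\psi$ in Theorem~\ref{thm:ClassPOrdi}, one has $\psi(n)\geq 1$ for every $n$, so the $n$-th factor has smallest nonempty member of size exactly $n$.

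Next I would verify that the infinite labelled product $\prod_{n\geq 1}$ in \eqref{eq:specj0} is well defined: apart from the empty object, the $n$-th factor contributes only objects of size $\geq n$, so any object of a fixed size $m$ has nontrivial components in only the first $m$ factors. Consequently the product is a bona fide labelled class, and (equivalently, as a formal power series) its exponential generating function is the product of the factor generating functions, namely $\prod_{n\geq 1}(1-z^n)^{-\psi(n)}$.

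Finally, Theorem~\ref{thm:ClassPOrdi} identifies precisely this product, in the case $j=0$, with $P(z)=\prod\big(1-z^{n_1\cdots n_i e_1\cdots e_k}\big)^{-n_1\cdots n_i}$, i.e.\ with the exponential generating function of \eqref{eq:GeneralPartitionMulti} (with $j=0$); this proves the corollary. I do not expect a genuine obstacle: the only steps demanding any care are the admissibility bookkeeping for the infinite product and the correct translation of $\seq_{\psi(n)}$ and $\mathcal Z^n$ into the labelled generating-function dictionary, both entirely routine.
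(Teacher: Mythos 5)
Your proof is correct, and it takes the route the paper clearly intends: the authors declare the corollary ``straightforward'' and omit the argument, and what is expected is exactly the routine translation you carry out (labelled product $\mathcal Z^{n}\mapsto z^{n}$, $\seq\mapsto (1-z^{n})^{-1}$, $\seq_{\psi(n)}\mapsto(1-z^{n})^{-\psi(n)}$, admissibility of the infinite product, then matching with Theorem~\ref{thm:ClassPOrdi}). You also correctly caught and silently repaired what is evidently a typo in the statement: the reference should be to equation \eqref{eq:GeneralPartitionMulti} (the $\mathcal P$-form), not \eqref{eq:GeneralPartitionPower}, which is consistent both with the specification being written for $\mathcal P(\mathcal Z)$ and with Proposition~\ref{prop:isoClasses} asserting that \eqref{eq:specGeneral} and \eqref{eq:specj0} give isomorphic classes.
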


\begin{proposition}
\label{prop:isoClasses}
Given an admissible triple $(i, 0, k)$, the combinatorial classes defined by equations \eqref{eq:specGeneral} and \eqref{eq:specj0} are isomorphic.
\end{proposition}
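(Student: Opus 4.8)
The plan is to reduce the statement to a comparison of exponential generating functions. By Theorem~\ref{thm:ClassPExpo} (which is valid for every admissible triple, hence in particular for $(i,0,k)$) together with the corollary immediately following it, the class specified by \eqref{eq:specGeneral} has exponential generating function $P(z)$; and by Theorem~\ref{thm:ClassPOrdi} (applied with $j=0$) together with the corollary that follows it, the class specified by \eqref{eq:specj0} also has exponential generating function $P(z)$. Since a combinatorial class is determined up to isomorphism by its counting sequence -- equivalently, by its generating function -- the two classes are isomorphic, and this is the entire argument.

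What makes the two a priori different product formulas for $P(z)$ manifestly coincide is the arithmetic identity $\chi(n)=n\,\psi(n)$, valid for every triple of the form $(i,0,k)$ and read straight off the case definitions: $n^2\tau_i(n)=n\cdot n\tau_i(n)$ for $(i\ge1,0,0)$, $\,n\tau_k(n)=n\cdot\tau_k(n)$ for $(0,0,k\ge1)$, and $\chi(n)$ equals $n$ times $\sum_{\ell=1}^{\tau(n)}p_\ell\,\tau_i(p_\ell)\,\tau_k(n/p_\ell)=\psi(n)$ for $(i\ge1,0,k\ge1)$. Indeed, translating \eqref{eq:specGeneral} through the standard dictionary for labelled admissible constructions (so $\seq_m$, $\seq$, $\cyc_m$, $\set$ and $\mathcal Z^n$ pass to $(\,\cdot\,)^m$, $1/(1-\cdot\,)$, $z^m/m$, $\exp(\,\cdot\,)$ and $z^n$), the $n$th factor becomes $\exp\!\big(\chi(n)\sum_{\ell\ge1}z^{n\ell}/(n\ell)\big)=\exp\!\big(\psi(n)\sum_{\ell\ge1}z^{n\ell}/\ell\big)=(1-z^n)^{-\psi(n)}$, which is exactly the $n$th factor produced by \eqref{eq:specj0}. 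The only routine point to check is that both infinite products are admissible as class constructions, which holds because the $n$th factor on either side has no object of positive size below $n$.

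There is essentially no obstacle here -- this is the ``straightforward'' proof the text alludes to -- and the only candidate for a genuine difficulty, which is not really one, is producing an honest size-preserving bijection rather than a counting argument. If one wants that, it suffices, after using $\seq_m(\mathcal A)=\mathcal A^{m}$ and $\chi(n)=n\psi(n)$ and then taking $\psi(n)$-fold labelled products and the product over $n$, to exhibit for each fixed $n$ an isomorphism
\[
	\bigl(\set(\bigcup\nolimits_{\ell\ge1}\cyc_{n\ell}(\mathcal Z))\bigr)^{n}\;\cong\;\seq(\mathcal Z^n).
\]
Here the left-hand side is isomorphic -- via ``a labelled product of $n$ copies of $\set$ is $\set$ over the $n$-fold disjoint union'' -- to the class of sets of $n$-coloured cycles whose lengths are all divisible by $n$, and Foata's fundamental bijection, applied on the $z^n$-graded pieces, matches that class with $\seq(\mathcal Z^n)$, the colour recording which block-aligned rotation of a cycle is read first. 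What takes a little care there is that Foata's correspondence must be carried out on the condensed alphabet of $n$-blocks so the divisibility of cycle lengths is preserved; since all of these building blocks have counting sequence $N!$ when $n\mid N$ and $0$ otherwise, nothing deeper is hiding. For this reason I would present the generating-function argument as the proof and keep the bijection only as a remark.
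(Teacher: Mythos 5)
Your proof is correct and is precisely the generating-function argument the paper has in mind when it declares this result "now straightforward" after Theorem~\ref{thm:ClassPOrdi}: both specifications translate, via the identity $\chi(n)=n\psi(n)$ valid for every $(i,0,k)$, to the same exponential generating function $\prod_{n\ge1}(1-z^n)^{-\psi(n)}=P(z)$, and hence the classes are combinatorially isomorphic in the Flajolet--Sedgewick sense. The concluding Foata-style bijective sketch is a correct and worthwhile supplement that the paper itself does not supply.
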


\subsection{Examples}

So, let us now see several examples
to better understand the labeled structures defined by
equations \eqref{eq:specGeneral} and \eqref{eq:specj0}.
Let us start with the cases that force
an exponential context, i.e. when $j\geq 1$.
Actually, in virtue of Theorem \ref{thm:ClassPExpo},
analyzing the first few cases suffices to understand
the whole class described in equation \eqref{eq:specGeneral}.


\subsubsection{Colored permutations by number of divisors: $(0,1,0)$ }
\label{subsubsec:010}

Consider the case $(i,j,k) = (0,1,0)$.
This is the case of Theorem \ref{thm:KotesovecP}.
In this case, the specification in equation \eqref{eq:specGeneral} becomes
\begin{align}
\label{eq:specLabeledCycles}
	\mathcal{P} (\mathcal Z) 
	= \prod_{n=1}^{\infty}\set\Big(\bigcup_{\ell = 1}^\infty
	\cyc_{n\ell}(\mathcal{Z})\Big)
\end{align}
(compare with \eqref{eq:permutationSpec}).
It represents the class of permutations, but now they carry a label on each cycle
\emph{which is a divisor of that cycle's length}.
For instance,
\begin{align}
\label{eq:exLabelPartition}
	\overbrace{(3\thinspace 19\thinspace 30\thinspace 21\thinspace 14\thinspace
	11\thinspace 9\thinspace 15\thinspace 29\thinspace 6\thinspace 8\thinspace 32)}^{3|12}
	\overbrace{(2\thinspace 10\thinspace 16\thinspace 13\thinspace 34\thinspace 20)}^{2|6}
	\overbrace{(24\thinspace 26\thinspace 33\thinspace 25\thinspace 28)}^{1|5}
	\overbrace{(4\thinspace 18\thinspace 22\thinspace 23)}^{4|4}	
	\overbrace{(1\thinspace 5\thinspace 27)}^{3|3}
	\overbrace{(7\thinspace 12\thinspace 31)}^{1|3}
	\overbrace{(17)}^{1|1}.
\end{align}
Any such set of labels showing an example divisor of each cycle length
is suitable.  For instance, another element of size~34 is
\begin{align}
\label{eq:exLabelPartition2}
	\overbrace{(3\thinspace 19\thinspace 30\thinspace 21\thinspace 14\thinspace
	11\thinspace 9\thinspace 15\thinspace 29\thinspace 6\thinspace 8\thinspace 32)}^{12|12}
	\overbrace{(2\thinspace 10\thinspace 16\thinspace 13\thinspace 34\thinspace 20)}^{3|6}
	\overbrace{(24\thinspace 26\thinspace 33\thinspace 25\thinspace 28)}^{5|5}	
	\overbrace{(4\thinspace 18\thinspace 22\thinspace 23)}^{2|4}
	\overbrace{(1\thinspace 5\thinspace 27)}^{1|3}
	\overbrace{(7\thinspace 12\thinspace 31)}^{1|3}
	\overbrace{(17)}^{1|1}
\end{align}
We emphasize that \emph{any} label of the form $p|l$ above a cycle
of length $l$ is valid. In other words, an element of
$\mathcal{P}$ is completely determined by the structure of the cycles
in that permutation, and the choice of one divisor (indeed, any
divisor $p$) of each cycle length.
The choice of divisors is equivalent to coloring each row of length $n$ of the corresponding Young diagram
with $\tau(n)$ colors, e.g. see Figure \ref{fig:PartiLabel}.
So in this case
the structure is the class of \emph{colored permutations with divisor function} $\tau(n)$,
that is, colored permutations \emph{by number of divisors}.
Its first elements are shown in Figure \ref{fig:elementsPQ}.\begin{figure}[h] 
   \centering
   \includegraphics[width=6.2in]{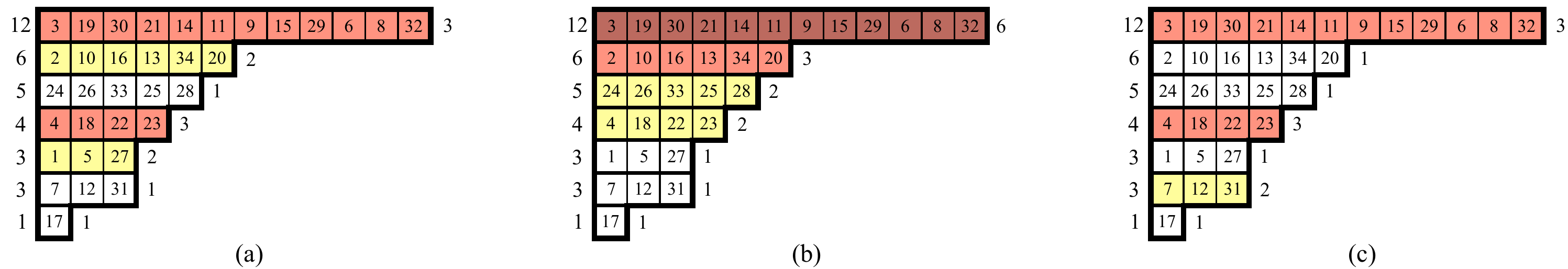} 
   \caption{Colored permutations by number of divisors (the numbering on the right
   of each row corresponds to the color number).
   (a) Coloring according to equation \eqref{eq:exLabelPartition} 
   (b) Coloring according to equation \eqref{eq:exLabelPartition2}.
   (c) Another legal coloring.}
   \label{fig:PartiLabel}
\end{figure}
\begin{figure}[h] 
   \centering
   \includegraphics[width=6.0in]{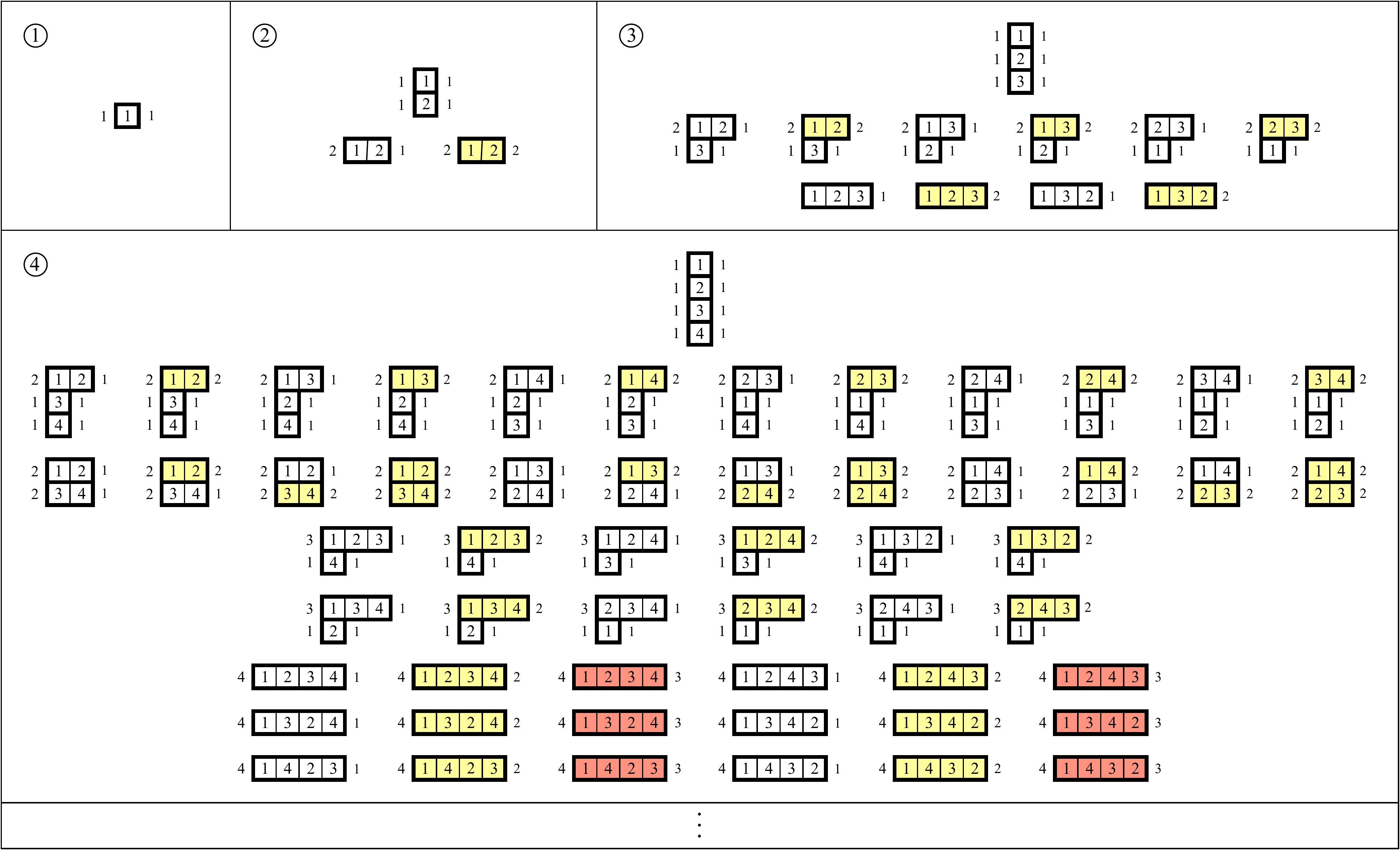} 
   \caption{The first elements of colored permutations by number of divisors.
   }
   \label{fig:elementsPQ}
\end{figure}

\subsubsection{Colored permutations by number of divisors
with rooted colorings of divisor diagrams by number of divisors: $(0,2,0)$ }
\label{subsubsec:020}

Consider the case $(i,j,k) = (0,2,0)$.
In this case we obtain the specification
\begin{align}
\label{eq:specLabeledCyclesDivisor}
	\mathcal{P} (\mathcal Z) =
	\prod_{n=1}^{\infty}\seq_{\tau(n)}\left(\set\left( \bigcup\limits_{\ell=1}^\infty \cyc_{n\ell}(\mathcal Z)\right)\right).
\end{align}
\begin{figure}[h] 
   \centering
   \includegraphics[width=6.5in]{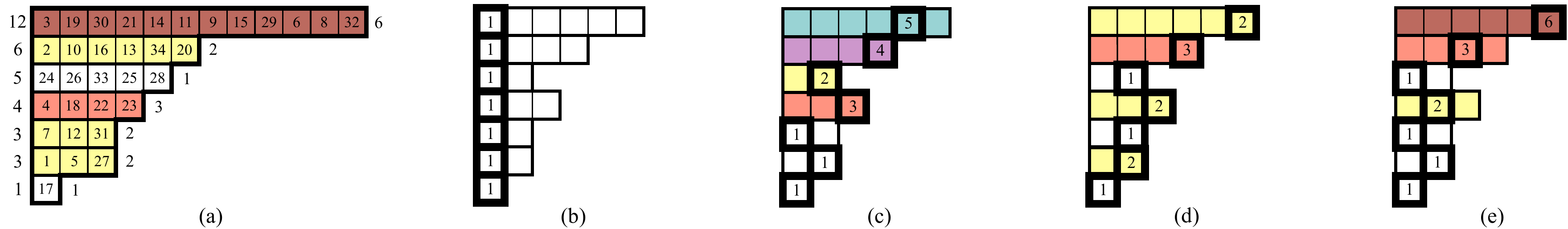} 
   \caption{A colored permutation by number of divisors
   together with four instances of rooted colorings of its divisor diagram by number of divisors.
   (a) A colored partition in a Young diagram.
   (b), (c), (d) Three different rooted colorings of the divisor diagram associated to the Young diagram in (a).}
   \label{fig:YoungDivisor}
\end{figure}

\noindent
Keeping in mind the case $(0,1,0)$ in Section~\ref{subsubsec:010},
we see that again we get colored permutations by number of divisors,
but now, in addition, each block in the Young diagram carries an additional
label, namely, for each block of length $n$, if we list the divisors of $n$ as
$1=p_1< \ldots < p_{\tau(n)}=n$ and then pick one divisor $p_l$, then the label
is a color chosen from a set of colors of size of $\tau(p_l)=l$
(see Remark \ref{rem:divisorDiagram}), and this can be
pictured as \emph{colorings of rooted divisor diagrams by size}.
So in this case, if the $l$th box of a block is the root, then the
size of the set of colors is $l$. See Figure~\ref{fig:YoungDivisor} for instance,
it shows a colored permutation and four instances of rooted colorings
of its divisor diagram by number of divisors. If the first box is chosen as
the root for all blocks, then there is only one coloring,
the \emph{monochromatic}, and this is the case shown in item (b)
in this figure. Items (c), (d) and (e) show other possible rooted colorings of the divisor diagram.

\subsubsection{Colored permutations by number of divisors with rooted colorings of divisor diagram with divisor function $\chi(n)$: $(i,j,k)$ }
\label{subsubsec:ijk}

With the two previous examples on mind, now we can understand the general case
in equation \eqref{eq:specGeneral} because it only differs
from the specification in equation \eqref{eq:specLabeledCyclesDivisor} 
by the divisor function $\chi(n)$ instead of the divisor function $\tau(n)$ in the sequence operator.
Thus, the structure is essentially the same, that is, the class consists again of colored
permutations by number of divisors, and the difference is that the coloring
of the rooted divisor diagram is now determined by the divisor function $\chi(n)$.
To be precise, if the $l$th box in a block of the divisor diagram
that is associated to an $n$-block in the Young diagram has been chosen as the root,
and if the divisors of $n$ are written as $1=p_1<\ldots <p_{\tau(n)}=n$,
then the block of the divisor diagram is colored with a set of colors of size $\chi(p_l)$.
Note that in the general case, if two distinct boxes in the $l$th column of the
divisor diagram are chosen as roots of their corresponding blocks,
then the blocks may be colored with sets of colors of distinct cardinalities.

\medskip
Now it is turn to analyze the specification given in equation \eqref{eq:specj0}.

\subsubsection{Ordered colorings of Young tableaux by size: $(1,0,0)$ }
\label{subsubsec:100}

Now consider the triple $(i,j,k) = (1,0,0)$.
In this case, $P(z)$ is known
to be the ordinary generating function of the \emph{plane partitions}, studied by Wright
(see \cite{Wright31}). In a labelled universe, the class consists of well colorings
of plane partitions and is combinatorially isomorphic to the (isomorphic) classes that result from the specifications
given in equations \eqref{eq:specGeneral} and \eqref{eq:specj0}.
The later becomes
\begin{align}
\label{eq:plane}
	\mathcal P(\mathcal Z) = \prod_{n=1}^{\infty}\seq_n\left({\seq(\mathcal Z^n)}\right)
\end{align}
and it translates into
\begin{align}
	P(z) = \prod_{n=1}^{\infty} \frac{1}{(1-z^n)^n}.
\end{align}
In this case, the labelled class $\mathcal P$, as specified in equation \eqref{eq:plane},
consists of \emph{colorings of Young tableaux by size}, i.e. the blocks of size $n$
are colored with $n$ distinct colors. See Figure \ref{fig:YoungColored} for examples.
Note the difference with the colored permutations by number of divisors from Section~\ref{subsubsec:010}
in that here the Young tableaux are colored by size and they are not modular nor symmetric.
Furthermore, in this case we also have $\chi(n)=n^2\tau_{1}(n)=n^2$, thus,
according to Proposition \ref{prop:isoClasses}, the class of colored Young tableaux by size
is isomorphic to the class of colored permutations by number of
divisors with rooted colorings of divisor diagrams by \emph{size squared}.
\begin{figure}[h] 
   \centering
   \includegraphics[width=6.5in]{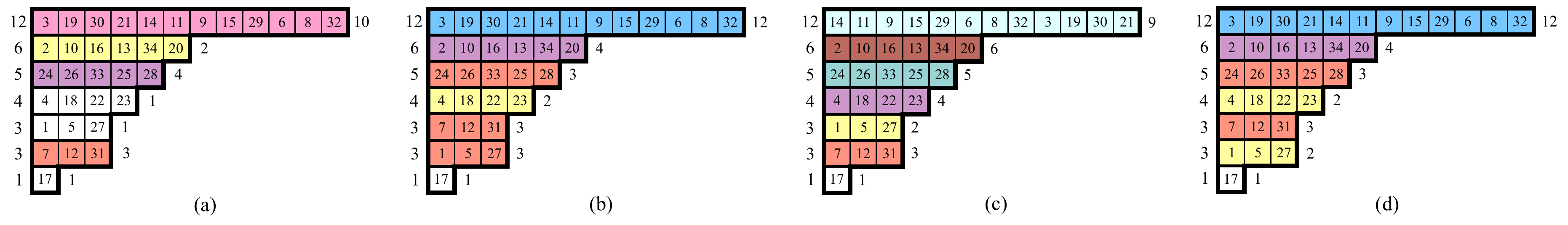} 
   \caption{Colored Young tableaux by size. Each row of length $n$ is colored with a set of colors of size $n$.
   (a) A colored Young tableau by size function.
   (b) Another colored Young tableau, it differs from (a) not only from the coloring, but also 
   the labelling of the rows of size three have been permuted, i.e. the Young tableaux are not symmetric.
   (c) Another colored Young tableau, it differs from (a) not only from the coloring, but also a
   row has been cyclically shifted, i.e. the Young tableaux are not modular.   
   (d) Another coloring of another Young tableau.
   }
   \label{fig:YoungColored}
\end{figure}

\subsubsection{Colored Young tableaux by divisor function $\psi(n)$: $(i,0,k)$}
\label{subsubsec:200}

The specification in equation \eqref{eq:specj0} for the general case of triples $(i,0,k)$
differs from the previos example in that $\psi(n)$ now replaces the size function $n$.
In other words, we get again colored Young tableaux like in Section~\ref{subsubsec:100}, the only difference is that now the
size of the set of colors used to color the set of $n$-blocks is the divisor function $\psi(n)$
(indeed, when $i=1$ and $k=0$, $\psi(n)=n$, i.e. $\psi(n)$ is \emph{the size}).
Again we recall that this class is isomorphic to the class of colored permutations by number of divisors with with rooted colorings of divisor diagrams with divisor function $\chi(n)$.

\subsection{The class $\mathcal Q$}

With the two global combinatorial perspectives for $\mathcal P$ given by the specifications in equation \eqref{eq:specGeneral}
and \eqref{eq:specj0},
we can give a general description of the corresponding class $\mathcal Q$
with markers and bivariate (exponential) generating functions as follows.
First consider a weighted version of equation \eqref{eq:specGeneral},
namely, the specification
\begin{align}
\label{eq:specLabeledCycles}
	\widehat{\mathcal{P}}(\mathcal Z , \mathcal V) = \prod\limits_{n=1}^\infty
	\seq_{\chi(n)}\left(\set\left(\bigcup_{\ell = 1}^{\infty} \mathcal V^{\ell + 1}\cyc_{n\ell} (\mathcal Z)\right)\right).
\end{align}
The corresponding bivariate (exponential) generating function $\widehat{P}(z,v)$ is therefore
\begin{align}
	\widehat{P}(z,v) =
	\prod_{n=1}^{\infty}\exp\left(\sum_{\ell = 1}^\infty
	v^{\ell+1}\frac{z^{n\ell}}{n\ell}\right)^{\chi(n)}.
\end{align}
In the case $v=1$, we have already seen (in equation~\eqref{eq:generalizedEulerTransformation}) that $\widehat{P}(z,1)=P(z)$.
Now set the weight $v=-1$ and follow the proof of Theorem \ref{thm:ClassPExpo}
backwards to get
\begin{align}
	\widehat{P}(z,-1)
	& = \prod_{n=1}^{\infty}\exp\left(\sum_{\ell=1}^{\infty} (-1)^{\ell+1}\frac{z^{n\ell}}{n\ell}\right)^{\chi(n)} \\
	& = \prod \exp\left( -\sum\limits_{\ell=1}^\infty \frac{(-z^{n_1\cdots n_{i}d_{1}\cdots d_{j}e_{1}\cdots e_{k}})^ \ell }{n_1\cdots n_{i}d_{1}\cdots d_{j}e_{1}\cdots e_{k}\ell} \right)^{(n_1\cdots n_i)^2\cdot e_{1}\cdots e_{k}} \\
	& = \prod \exp\left( -\sum\limits_{\ell=1}^\infty \frac{(-z^{n_1\cdots n_{i}d_{1}\cdots d_{j}e_{1}\cdots e_{k}})^ \ell }{\ell} \right)^{n_1\cdots n_i/ d_{1}\cdots d_{j}} \\
	& = \prod \exp\left( - \log \frac{1}{1+z^{n_1\cdots n_{i}d_{1}\cdots d_{j}e_{1}\cdots e_{k}}} \right)^{n_1\cdots n_i / d_{1}\cdots d_{j}} \\
	& = Q(z).
\end{align}
More generally, however, if we fix the value of the weights $v$, we
get 
\begin{align}
P_{n}^{(v)} = [z^{n}]\widehat{P}(z,v)
= [z^{n}]\prod_{n=1}^{\infty}\exp\left(\sum_{\ell = 1}^\infty
	v^{\ell+1}\frac{z^{n\ell}}{n\ell}\right)^{\chi(n)}
= [z^{n}]\prod_{\delta=1}^{\infty}\exp\Bigg(\sum_{\ell \geq 1\atop{\rm with\!\ } \delta|\ell}^\infty
	v^{\frac{\ell}{\delta}+1}\frac{z^{\ell}}{\ell}\Bigg)^{\chi(\delta)}.
\end{align}
As seen above, the coefficients $Q_{n} = [z^{n}]Q(z)$
can be calculated as a sum of weighted terms (with weights $+1$ and $-1$)
of coefficients from $P(z)$.
Since $v=1$ yields $\widehat{P}(z,1) = P(z)$ 
and $v=-1$ yields $\widehat{P}(z,-1) = Q(z)$, it would be interesting
to explore other exponential generating functions of the form
$\widehat{P}(z,v)$ for other roots of unity $v=e^{i\theta}$ on the unit circle.
We leave such explorations as open problems.

\section{Asymptotic analysis}
\label{sec:asymptotic}

The asyptotic analysis is carried out in a similar way as in \cite{FlajoletSedewick09}.
Let us start with the substitution $z\triangleq e^{-t}$.
Let $L_{P}(t) = \log\big(P(e^{-t})\big)$ and
$L_{Q}(t) = \log\big(Q(e^{-t})\big)$.
Also, let $L(t)$ denote either $L_P(t)$ or $L_Q(t)$.

\subsection{Mellin transforms}

As a general reference to Mellin transforms, see \cite[Ch.~9]{Szpankowski01}.

\begin{proposition}[Mellin transform for $\mathcal P$ and $\mathcal Q$]
\label{prop:MellinMultiSet}
	Let $L_{P}^{*}(s) := {\mathcal M}[L_{P}(t); s]$ and
	$L_{Q}^{*}(s) := {\mathcal M}[L_{Q}(t); s]$
	be the Mellin transforms of $L_{P}(t)$ and $L_{Q}(t)$, respectively.
	These Mellin transforms have the succinct forms
	\begin{equation}
	\label{eq:MellinMultiSet}
		L_{P}^{*}(s) = \zeta(s-1)^i\zeta(s+1)^{j+1}\zeta(s)^{k}\Gamma(s)
	\end{equation}
and
	\begin{equation}
	\label{eq:MellinSet}
		L_{Q}^{*}(s) = (1-2^{-s})\zeta(s-1)^i\zeta(s+1)^{j+1}\zeta(s)^{k}\Gamma(s).
	\end{equation}
\end{proposition}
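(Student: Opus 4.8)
The plan is to compute $L_P(t)=\log P(e^{-t})$ directly from the cyclic Euler transform representation established in Theorem~\ref{thm:ClassPExpo}, expand it as a harmonic sum, and then read off the Mellin transform term by term using the standard fact that $\mathcal{M}[e^{-t};s]=\Gamma(s)$ together with the separation-of-variables rule $\mathcal{M}[\sum_k \lambda_k f(\mu_k t);s]=\big(\sum_k \lambda_k \mu_k^{-s}\big)f^*(s)$. First I would write, from equation~\eqref{eq:generalizedEulerTransformation},
\begin{equation}
	L_P(t)=\log P(e^{-t})=\sum_{n\geq 1}\chi(n)\sum_{\ell\geq 1}\frac{e^{-n\ell t}}{n\ell}.
\end{equation}
This is a harmonic sum with frequencies $n\ell$ and amplitudes $\chi(n)/(n\ell)$, so its Mellin transform is
\begin{equation}
	L_P^*(s)=\Gamma(s)\sum_{n\geq 1}\sum_{\ell\geq 1}\frac{\chi(n)}{n\ell}\,(n\ell)^{-s}
	=\Gamma(s)\,\zeta(s+1)\sum_{n\geq 1}\frac{\chi(n)}{n^{s+1}},
\end{equation}
valid in a right half-plane where everything converges absolutely (one pushes $\Re s$ large enough that the Dirichlet series below converges, then continues meromorphically).

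The heart of the argument is therefore to show that for \emph{every} admissible triple $(i,j,k)$ the Dirichlet series $D(s):=\sum_{n\geq 1}\chi(n)n^{-s-1}$ equals $\zeta(s-1)^i\,\zeta(s+1)^{j}\,\zeta(s)^{k}$. I would handle this by going back to the unfactored product form: rather than plugging in the piecewise formula for $\chi(n)$, observe that the last line of the proof of Theorem~\ref{thm:ClassPExpo} exhibits $\chi$ implicitly as the arithmetic function whose generating identity is the $(i+j+k)$-fold sum over $n_1,\dots,n_i,d_1,\dots,d_j,e_1,\dots,e_k$. Concretely, collecting the exponent data shows
\begin{equation}
	\sum_{n\geq 1}\frac{\chi(n)}{n^{s+1}}=\sum_{n_1,\dots,e_k\geq 1}\frac{(n_1\cdots n_i)^2 e_1\cdots e_k}{(n_1\cdots n_i\, d_1\cdots d_j\, e_1\cdots e_k)^{s+1}}
	=\prod_{r=1}^{i}\Big(\sum_{n\geq 1}n^{1-s}\Big)\prod_{r=1}^{j}\Big(\sum_{d\geq 1}d^{-s-1}\Big)\prod_{r=1}^{k}\Big(\sum_{e\geq 1}e^{-s}\Big),
\end{equation}
which is exactly $\zeta(s-1)^i\,\zeta(s+1)^j\,\zeta(s)^k$. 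Multiplying by the extra $\zeta(s+1)$ from the $\ell$-sum gives the stated $\zeta(s+1)^{j+1}$, and the $\Gamma(s)$ factor completes \eqref{eq:MellinMultiSet}. One should double-check the boundary cases where $i$, $j$, or $k$ is $0$ (empty products equal $1$) and confirm the piecewise definition of $\chi(n)$ in Theorem~\ref{thm:ClassPExpo} is consistent with this product — this bookkeeping across the seven cases is the main obstacle, since each case repackages the same multiplicative identity through a different nesting of divisor convolutions ($\tau_i$, $\tau_j$, $\tau_k$), and one must verify the Dirichlet-series identity $\sum \tau_m(n)n^{-s}=\zeta(s)^m$ is being applied with the correct shifts in each row.

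For \eqref{eq:MellinSet}, the same computation applies to $L_Q(t)=\log Q(e^{-t})$, except that the alternating sign from $\widehat P(z,-1)=Q(z)$ replaces $\sum_{\ell\geq 1}\ell^{-1}(n\ell)^{-s}$ by $\sum_{\ell\geq 1}(-1)^{\ell+1}\ell^{-1}(n\ell)^{-s}=\eta(s+1)\,n^{-s-1}$, where $\eta$ is the Dirichlet eta function. Since $\eta(s+1)=(1-2^{-s})\zeta(s+1)$, this contributes precisely the factor $(1-2^{-s})$ in front of the otherwise identical expression, giving \eqref{eq:MellinSet}. I would close by noting the fundamental strip: the harmonic-sum formalism requires $\Re s$ large for absolute convergence, and $L_P(t)=O(t^{-2})$ as $t\to 0^+$ with exponential decay as $t\to\infty$ pins down the strip so that the Mellin inversion used later in Section~\ref{sec:asymptotic} is justified.
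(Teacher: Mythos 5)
Your proof is correct and computes the same Mellin transform by the same mechanism the paper uses: recognize $L_P(t)$ as a harmonic sum, apply $\mathcal{M}[e^{-\lambda t};s]=\lambda^{-s}\Gamma(s)$ termwise, and factor the resulting multi-index Dirichlet series into a product of shifted zetas, with the $Q$-case replacing $\zeta(s+1)$ from the $\ell$-sum by the eta function $\eta(s+1)=(1-2^{-s})\zeta(s+1)$. The only difference is a small detour: you route through the cyclic Euler transform of Theorem~\ref{thm:ClassPExpo}, collapsing the $(i+j+k)$-fold index into $\chi(n)$ and obtaining $L_P^*(s)=\Gamma(s)\zeta(s+1)\sum_n\chi(n)n^{-s-1}$, and then immediately re-expand $\chi(n)$ back into the multi-index sum to evaluate its Dirichlet series. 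The paper instead expands $\log(1-z^{\cdots})^{-1}$ directly from the product form and sums over $\ell$ together with the $n$'s, $d$'s, $e$'s, so it never introduces $\chi$; the two computations are a reshuffling of each other. Your phrasing buys a conceptual link to Theorem~\ref{thm:ClassPExpo}, and correctly sidesteps the seven-case piecewise formula for $\chi$ by working with the multi-index form — which is wise, since verifying $\sum\tau_m(n)n^{-s}=\zeta(s)^m$ row by row with the right shifts would be tedious and is not needed. Your closing remark about the fundamental strip is a detail the paper leaves implicit and is worth retaining.
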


\begin{proof}
We have 
$L_{P}(t) = \log\big(P(e^{-t})\big)
	 = \log \prod (1-e^{-tn_1\cdots n_id_1\cdots d_j e_1 \cdots
           e_k})^{-n_1\cdots n_i/d_1\cdots d_j}$,
by the definition of $L_P$ and $P$.  The log of the product equals the
sum of the logs, and it follows that 
$L_{P}(t) = \sum\frac{n_1\cdots n_i}{d_{1}\cdots d_{j}}
		\log(1/(1-e^{-tn_1\cdots n_id_1\cdots d_j e_1 \cdots
                  e_k}))$.  Then we can expand the log as a series,
                and exchange the order of the summations, to obtain
$L_{P}(t) = \sum_{j=1}^{\infty}\frac{1}{j}\sum
		\frac{n_1\cdots n_i}{d_{1}\cdots d_{j}}e^{-tn_1\cdots n_id_1\cdots d_j e_1 \cdots e_kj}$.
By linearity of the Mellin transform, we have 
$L_{P}^{*}(s) = \sum_{j=1}^{\infty}\frac{1}{j}\sum
		\frac{n_1\cdots n_i}{d_{1}\cdots d_{j}}{\mathcal M}\left[e^{-tn_1\cdots n_id_1\cdots d_j e_1 \cdots e_kj} ; s\right]$.
Using scaling for the Mellin transform~\cite[equation~(9.5)]{Szpankowski01}, we have
${\mathcal M}\left[e^{-tn_1\cdots n_id_1\cdots d_j e_1 \cdots e_kj} ; s\right]
= (n_1\cdots n_id_1\cdots d_j e_1 \cdots e_kj)^{-s}{\mathcal
  M}\left[e^{-t} ; s\right]$.  It is well known that ${\mathcal
  M}\left[e^{-t} ; s\right] = \Gamma(s)$.  
So it follows that 
$L_{P}^{*}(s)
	= \sum_{j=1}^{\infty}\frac{1}{j}\sum
		\frac{n_1\cdots n_i}{d_{1}\cdots d_{j}}
                \cdot(n_1\cdots n_id_1\cdots d_j e_1 \cdots e_k
                j)^{-s}\Gamma(s)$, which simplifies to 
$$L_{P}^{*}(s) = \zeta(s-1)^i\zeta(s+1)^{j+1}\zeta(s)^{k}\Gamma(s).$$
The analysis for $L_{Q}(t)$ is similar.  We have
$L_{Q}(t)  
	  = \log \prod (1+e^{-tn_1\cdots n_id_1\cdots d_j e_1 \cdots e_k})^{n_1\cdots n_i/d_{1}\cdots d_{j}}$.
Then we expand as before, and we get 
$L_{Q}(t) = \sum_{j=1}^{\infty}\frac{(-1)^{j+1}}{j}\sum
		\frac{n_1\cdots n_i}{d_{1}\cdots d_{j}}e^{-tn_1\cdots n_id_1\cdots d_j e_1 \cdots e_kj}$.
Again using linearity and scaling, 
it follows that 
$L_{Q}^{*}(s) = \sum_{j=1}^{\infty}\frac{(-1)^{j+1}}{j}\sum
		\frac{n_1\cdots n_i}{d_{1}\cdots d_{j}}
                \cdot(n_1\cdots n_id_1\cdots d_j e_1 \cdots e_k
                j)^{-s}\Gamma(s)$, which simplifies to 
$L_{Q}^{*}(s) = \eta(s+1)\zeta(s-1)^i\zeta(s+1)^{j}\zeta(s)^{k}\Gamma(s)$,
where $\eta(s) = \sum_{n=1}^{\infty}\frac{(-1)^{n+1}}{n^s}$ is the
Dirichlet eta function.
For symmetry with the equation for $L_{P}^{*}(s)$, we transform this to:
$$
L_{Q}^{*}(s)
	= (1-2^{-s})\zeta(s-1)^i\zeta(s+1)^{j+1}\zeta(s)^{k}\Gamma(s).
$$
\end{proof}

Henceforth, let $L^*(s)$ denote either $L^*_P(s)$ or $L_Q^*(s)$.

\subsection{Singularity analysis}
We make a handful of singularity analysis observations.

\begin{proposition}
\label{prop:singular}
The following hold:
\begin{enumerate}
\item
\label{prop:singular:2}
If $i\geq 1$, then $L^*(s)$ has a singularity at $s=2$ generated by
$\zeta(s-1)^{i}$.  It is a pole of order $i\geq 1$, because the remaining terms in equations \eqref{eq:MellinMultiSet} and \eqref{eq:MellinSet} are nonzero at $s=2$.
\item
\label{prop:singular:1}
If $k \geq 1$, then $L^*(s)$ has a singularity at $s=1$ generated by
$\zeta(s)^k$.  It is a pole of order $k\geq 1$ because the remaining terms terms in equations \eqref{eq:MellinMultiSet} and \eqref{eq:MellinSet} are nonzero at $s=1$.
\item
\label{prop:singular:0}
Both $L_P^*(s)$ and $L_Q^*(s)$ have a singularity at $s=0$ generated
by $\zeta(s+1)^{j+1}\Gamma(s)$.  The singularity of $L_P^*(s)$ at
$s=0$ is a pole of order $j+2 \geq 2$ because the remaining terms in
\eqref{eq:MellinMultiSet} are nonzero at $s=0$.
In contrast, since $1-2^{-s}$ has a simple zero at $s=0$, 
then the singularity of $L_Q^*(s)$ at
$s=0$ is a pole of order $j+1 \geq 1$ because the remaining terms in
\eqref{eq:MellinSet} are nonzero at $s=0$.
\item
\label{prop:singular:i0}
If $i=0$, then $L^*(s)$ has a singularity at $s=-1$ generated by
$\Gamma(s)$.  It is a simple pole because the remaining terms in
\eqref{eq:MellinMultiSet} and \eqref{eq:MellinSet} are nonzero at
$s=-1$.  On the other hand, if $i\geq 1$, then the simple pole at $s=-1$ generated by $\Gamma(s)$ is cancelled by the zero of order $i\geq 1$ in $\zeta(s-1)^i$. 
\item
\label{prop:singular:k0}
If $k=0$, then $L^*(s)$ has a singularity at $s=-2n$ 
for each $n\geq 1$,
generated by $\Gamma(s)$.  It is a simple pole because the remaining
terms in \eqref{eq:MellinMultiSet} and \eqref{eq:MellinSet} are
nonzero at $s=-2n$. 
On the other hand, if $k\geq 1$, then the singularity at $s=-2n$ is cancelled by the zero of order $k\geq 1$ in $\zeta(s)^k$.
\item
\label{prop:singular:even}
For each $n\geq 1$, the simple pole at $s = -(2n+1)$ generated by $\Gamma(s)$ is cancelled by the zero of order $j+1\geq 1$ in $\zeta(s+1)^{j+1}$.
\end{enumerate}
\end{proposition}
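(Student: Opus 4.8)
\noindent The plan is to reduce the six assertions to the standard local behaviour of the four elementary factors occurring in
\[
L_P^*(s)=\zeta(s-1)^i\,\zeta(s+1)^{j+1}\,\zeta(s)^k\,\Gamma(s),\qquad L_Q^*(s)=(1-2^{-s})\,L_P^*(s),
\]
and then to perform a purely local order count at each candidate singularity. First I would record the facts needed. The zeta function $\zeta(s)$ is meromorphic with a single pole, at $s=1$, which is simple; it has a simple zero at each negative even integer $s=-2,-4,-6,\dots$ and is nonzero at every other real point, with in particular $\zeta(2)=\pi^2/6\neq0$, $\zeta(3)\neq0$, $\zeta(0)=-\tfrac12\neq0$, $\zeta(-1)=-\tfrac1{12}\neq0$, and $\zeta$ nonzero at every negative odd integer. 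The Gamma function $\Gamma(s)$ is meromorphic, has a simple pole at each of $s=0,-1,-2,\dots$, and has no zeros. The function $1-2^{-s}$ is entire, vanishes on the real axis only at $s=0$, and that zero is simple since its derivative $\log2\cdot2^{-s}$ equals $\log2\neq0$ at $s=0$. Because $L^*(s)$ is a product of these factors, at any point $s_0$ the order of the pole of $L^*(s)$ is the sum of the orders of the factors at $s_0$, counting a zero of order $m$ as a pole of order $-m$; no residue computation is needed, only these orders.

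With this dictionary, items (1)--(6) become short arithmetic. At $s=2$ only $\zeta(s-1)$ is singular, contributing a pole of order $i$, while $\zeta(3)^{j+1}$, $\zeta(2)^k$, $\Gamma(2)=1$ (and, for $L_Q^*$, $1-2^{-2}=\tfrac34$) are nonzero, so the pole has order exactly $i$; this is (1). Symmetrically at $s=1$ only $\zeta(s)$ is singular, with a pole of order $k$, the remaining factors $\zeta(0)^i$, $\zeta(2)^{j+1}$, $\Gamma(1)=1$, $1-2^{-1}=\tfrac12$ being nonzero; this is (2). For (3): at $s=0$ the factor $\Gamma(s)$ contributes a simple pole and $\zeta(s+1)^{j+1}$ a pole of order $j+1$, while $\zeta(s-1)^i=\zeta(-1)^i$ and $\zeta(s)^k=\zeta(0)^k$ are nonzero, so $L_P^*$ has a pole of order $j+2$; for $L_Q^*$ the extra simple zero of $1-2^{-s}$ lowers this to $j+1$. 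Items (4), (5), (6) treat the negative integers, where $\Gamma(s)$ always supplies a simple pole and one asks which factor, if any, cancels it. At $s=-1$: if $i=0$ then $\zeta(s-1)^i\equiv1$ and the simple pole survives (the factors $\zeta(-1)^k$, $\zeta(0)^{j+1}$, $1-2^{-s}=-1$ being nonzero), whereas if $i\geq1$ then $\zeta(s-1)=\zeta(-2)=0$ simply, giving a zero of order $i\geq1$ that cancels the pole. At $s=-2n$ with $n\geq1$: $\zeta(s)=\zeta(-2n)=0$ simply, so $\zeta(s)^k$ has a zero of order $k$ — the simple pole survives when $k=0$ and is cancelled when $k\geq1$ — and one checks $\zeta(s-1)=\zeta(-2n-1)$ and $\zeta(s+1)=\zeta(1-2n)$ are evaluated at negative odd integers, hence nonzero (as is $1-2^{-s}$). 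At $s=-(2n+1)$ with $n\geq1$: $\zeta(s+1)=\zeta(-2n)=0$ simply, so $\zeta(s+1)^{j+1}$ has a zero of order $j+1\geq1$ that cancels the simple $\Gamma$ pole (it over-cancels when $\zeta(s-1)=\zeta(-2n-2)=0$ as well), while $\zeta(s)=\zeta(-(2n+1))\neq0$ and $1-2^{-s}\neq0$. Finally I would observe that $2,1,0$ together with $-1$ and all the $-2n$ and $-(2n+1)$ exhaust the poles of $\zeta(s-1)$, $\zeta(s)$, $\zeta(s+1)$, and $\Gamma(s)$, so the six items account for every singularity of $L^*(s)$.

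There is no deep obstacle; the only thing demanding care is the bookkeeping at the negative integers. One must keep the negative even integers, where $\zeta$ vanishes, apart from the negative odd ones, where it does not; notice that at a given negative integer two of $\zeta(s-1)$, $\zeta(s)$, $\zeta(s+1)$ may vanish simultaneously, so the $\Gamma$ pole is over-cancelled rather than merely cancelled; and back each ``the remaining terms are nonzero'' clause with an explicit value. I would therefore organise the write-up around a small table giving, for each of $s=2,1,0,-1,-2n,-(2n+1)$, the order of $\zeta(s-1)$, $\zeta(s+1)$, $\zeta(s)$, $\Gamma(s)$, and $1-2^{-s}$, and then read each assertion off the column sums; this makes an off-by-two in the locations of the trivial zeros impossible to overlook.
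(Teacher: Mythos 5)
Your proposal is correct and follows exactly the approach the paper uses implicitly: since Proposition~\ref{prop:singular} has no separate proof block and instead justifies each item inline with ``the remaining terms are nonzero,'' the argument is precisely the local pole/zero order bookkeeping you spell out, drawing on the simple pole of $\zeta$ at $1$, the simple trivial zeros of $\zeta$ at negative even integers, the simple poles of $\Gamma$ at nonpositive integers, and the simple zero of $1-2^{-s}$ at $0$. Your write-up is somewhat more explicit than the paper's (e.g.\ recording specific nonzero values such as $\zeta(3)$, $\zeta(0)=-\tfrac12$, $\zeta(-1)=-\tfrac1{12}$, and noting the over-cancellation at $s=-(2n+1)$ when $i\geq1$), but it is the same decomposition and the same argument.
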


\subsection{Residue analysis}
\begin{proposition}
\label{prop:poly:a}
For all $i\geq 1$ and $j\geq 0$ and $k\geq 0$, there is a 
computable polynomial $a(t)=a_0+\ldots + a_{i-1}t^{i-1}$ such that
$${\operatorname{Res}}\big(L^*(s)t^{-s}\big)_{s=2} 
			= \frac{a(\log t)}{t^2}.$$
For $L_P^*(s)$ and $L_Q^*(s)$ respectively, the values of $a_{i-1}$
are 
\begin{equation}\label{definitionofAconstant}
\frac{(-1)^{i-1}\zeta(3)^{j+1}\pi^{2k}}{6^{k}(i-1)!}
\qquad\hbox{and}\qquad
\frac{(3)
  (-1)^{i-1}\zeta(3)^{j+1}\pi^{2k}}{(4)(6^{k})(i-1)!}.
\end{equation}
\end{proposition}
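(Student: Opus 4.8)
The plan is to compute the residue of $L^*(s)t^{-s}$ at $s=2$ directly from the explicit product formulas \eqref{eq:MellinMultiSet} and \eqref{eq:MellinSet}, using the Laurent expansion of $\zeta(s-1)$ about $s=2$. The only factor in $L^*_P(s) = \zeta(s-1)^i\zeta(s+1)^{j+1}\zeta(s)^k\Gamma(s)$ that is singular at $s=2$ is $\zeta(s-1)^i$, which contributes a pole of order exactly $i$ by part \eqref{prop:singular:2} of Proposition \ref{prop:singular}. Writing $u = s-2$, recall $\zeta(1+u) = u^{-1} + \gamma + O(u)$, so $\zeta(s-1)^i = u^{-i}(1+\gamma u + \cdots)^i = u^{-i}\big(1 + i\gamma u + \cdots\big)$. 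The remaining factor $g_P(s) := \zeta(s+1)^{j+1}\zeta(s)^k\Gamma(s)t^{-s}$ is analytic and nonzero at $s=2$; expanding it in powers of $u$ gives $g_P(s) = t^{-2}\big(b_0 + b_1 u + \cdots\big)$ where $b_0 = \zeta(3)^{j+1}\zeta(2)^k\Gamma(2) = \zeta(3)^{j+1}(\pi^2/6)^k$, since $\zeta(2) = \pi^2/6$ and $\Gamma(2)=1$. Note that the $t$-dependence of $g_P$ enters only through $t^{-s} = t^{-2}e^{-u\log t}$, which expands as $t^{-2}\sum_{m\geq 0}(-\log t)^m u^m/m!$; hence the residue — the coefficient of $u^{-1}$ in $\zeta(s-1)^i g_P(s)$ — is a polynomial in $\log t$ of degree $i-1$, divided by $t^2$. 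This establishes the existence of the claimed polynomial $a(\log t) = a_0 + \cdots + a_{i-1}(\log t)^{i-1}$, and each coefficient is manifestly computable from the Laurent/Taylor data.

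For the leading coefficient $a_{i-1}$, I would isolate the contribution to the $u^{-1}$ coefficient that carries the highest power of $\log t$. The $(\log t)^{i-1}$ term can only come from pairing the full singular part $u^{-i}$ of $\zeta(s-1)^i$ (i.e. taking the "$1$" from the $(1+i\gamma u+\cdots)^i$ factor) with the degree-$(i-1)$ term $t^{-2}b_0(-\log t)^{i-1}u^{i-1}/(i-1)!$ from the expansion of $g_P$; all other pairings produce strictly lower powers of $\log t$. Therefore
$$
a_{i-1} = \frac{(-1)^{i-1} b_0}{(i-1)!} = \frac{(-1)^{i-1}\zeta(3)^{j+1}\pi^{2k}}{6^k (i-1)!},
$$
which is the asserted value for $L_P^*$.

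For $L_Q^*(s) = (1-2^{-s})\zeta(s-1)^i\zeta(s+1)^{j+1}\zeta(s)^k\Gamma(s)$, the computation is identical except that the analytic prefactor acquires the extra factor $1-2^{-s}$, which at $s=2$ equals $1 - 1/4 = 3/4$ and is nonzero there, so the pole order at $s=2$ is unchanged. The new constant term of the analytic part is $\frac{3}{4}b_0 = \frac{3}{4}\zeta(3)^{j+1}(\pi^2/6)^k$, and running the same argument gives
$$
a_{i-1} = \frac{(-1)^{i-1}}{(i-1)!}\cdot\frac{3}{4}\cdot\frac{\zeta(3)^{j+1}\pi^{2k}}{6^k} = \frac{(3)(-1)^{i-1}\zeta(3)^{j+1}\pi^{2k}}{(4)(6^k)(i-1)!},
$$
as claimed. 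The main obstacle is bookkeeping rather than conceptual: one must argue cleanly that no cross term between the subleading parts of $\zeta(s-1)^i$ and the analytic factor can inflate the $\log t$ degree beyond $i-1$, which follows because extracting $u^{-1}$ from $u^{-i}(c_0 + c_1 u + \cdots)(t^{-2})(d_0 + d_1 u + \cdots)$ forces a total of $i-1$ powers of $u$ to be supplied by the product of the two analytic series, and every power of $\log t$ in that product is accompanied by at least one matching power of $u$; the maximum number of $\log t$'s is thus achieved by taking all $i-1$ powers from the $t^{-s}$ expansion and none from $\zeta(s-1)^i$'s regular part, giving exactly degree $i-1$ with leading coefficient as above.
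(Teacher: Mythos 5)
Your proposal is correct and is essentially the same argument as the paper's: both isolate the order-$i$ pole coming from $\zeta(s-1)^i$, observe that the remaining factors (including $t^{-s}$) are analytic at $s=2$ with the $\log t$ powers supplied solely by $t^{-s}=t^{-2}e^{-u\log t}$, and extract the leading coefficient by pairing the pure $u^{-i}$ singularity with the $u^{i-1}$ term of $e^{-u\log t}$. The paper packages this by shifting to a residue at $s=0$ and citing the residue of $\zeta(s+1)^it^{-s}$ as known, while you derive the same degree and leading-coefficient facts explicitly from the Laurent expansion of $\zeta(1+u)$, so your write-up is if anything slightly more self-contained.
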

We use $a_{i-1}$ very often in the following discussion, so we define
$A := a_{i-1}$ (so that the dependence on the values of $i$ and $j$
and $k$ is implicit; we also treat the relationship to $P$ or $Q$ as implicit).
\begin{proof}
We first note that 
${\operatorname{Res}}\big(\zeta(s+1)^i t^{-s}\big)_{s=0}$ is a
polynomial in $\log{t}$ of degree $i-1$, with leading coefficient 
$\frac{(-1)^{i-1}}{(i-1)!}$.  So we have
$${\operatorname{Res}}\big(\zeta(s+1)^i t^{-s}\big)_{s=0} =
\frac{(-1)^{i-1}}{(i-1)!}(\log{t})^{i-1} + o((\log{t})^{i-1}).$$
At $s=0$, we note that 
$\zeta(s+3)^{j+1}$, $\zeta(s+2)^{k}$,
and
$\Gamma(s+2)$ are all smooth,
and also that 
$\zeta(2) = \pi^{2}\!/6$
and $\Gamma(2) = 1$.
It follows that
$${\operatorname{Res}}\big(
\zeta(s+1)^i \zeta(s+3)^{j+1} \zeta(s+2)^{k} \Gamma(s+2) t^{-s}\big)_{s=0} =
\frac{(-1)^{i-1}\zeta(3)^{j+1}\pi^{2k}}{6^{k}(i-1)!}(\log{t})^{i-1}
 + o((\log{t})^{i-1}).$$
Shifting $s$ by 2, the LHS becomes 
${\operatorname{Res}}\big(
\zeta(s-1)^i \zeta(s+1)^{j+1} \zeta(s)^{k} \Gamma(s)
t^{-(s-2)}\big)_{s=2}
= t^{2}{\operatorname{Res}}\big(
L_P^*(s)t^{-s}\big)_{s=2}$ so we obtain
$${\operatorname{Res}}\big(
L_P^*(s)t^{-s}\big)_{s=2} =
\frac{1}{t^{2}}\bigg(\frac{(-1)^{i-1}\zeta(3)^{j+1}\pi^{2k}}{6^{k}(i-1)!}(\log{t})^{i-1}
 + o((\log{t})^{i-1})\bigg).$$
At $s=2$, we also note that 
$1 - 2^{-s} = 3/4$, so we obtain
$${\operatorname{Res}}\big(
L_Q^*(s)t^{-s}\big)_{s=2} =
\frac{1}{t^{2}}\bigg(\frac{(3)(-1)^{i-1}\zeta(3)^{j+1}\pi^{2k}}{(4)(6^{k})(i-1)!}(\log{t})^{i-1}
 + o((\log{t})^{i-1})\bigg).$$

\end{proof}

\begin{proposition}
\label{prop:poly:b}
For all $i\geq 0$ and $j\geq 0$ and $k\geq 1$, there is a 
computable polynomial $b(t)=b_0+\ldots + b_{k-1}t^{k-1}$ such that
$${\operatorname{Res}}\big(L^*(s)t^{-s}\big)_{s=1} 
			= \frac{b(\log t)}{t}.$$
For $L_P^*(s)$ and $L_Q^*(s)$ respectively, the values of $b_{k-1}$
are 
\begin{equation}\label{definitionofBconstant}
\frac{(-1)^{i+k-1}\pi^{2(j+1)}}{6^{j+1}2^{i}(k-1)!}
\qquad\hbox{and}\qquad
\frac{(-1)^{i+k-1}\pi^{2(j+1)}}{6^{j+1}2^{i+1}(k-1)!}.
\end{equation}
\end{proposition}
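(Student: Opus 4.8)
The plan is to run the same argument as in the proof of Proposition~\ref{prop:poly:a}, with the pole at $s=2$ (coming from $\zeta(s-1)^i$) replaced by the pole at $s=1$ (coming from $\zeta(s)^k$). By Proposition~\ref{prop:singular}\,(\ref{prop:singular:1}), when $k\geq 1$ the function $L^*(s)$ has a pole of order exactly $k$ at $s=1$, generated by $\zeta(s)^k$, since $\zeta(s-1)^i$, $\zeta(s+1)^{j+1}$, $\Gamma(s)$ (and $1-2^{-s}$ for $L_Q^*$) are analytic and nonzero at $s=1$. Hence $\operatorname{Res}(L^*(s)t^{-s})_{s=1}$ is $t^{-1}$ times a polynomial in $\log t$ of degree $k-1$ whose coefficients are computable from the Taylor/Laurent data of $\zeta$ and $\Gamma$ at $0$, $1$, $2$; only the leading coefficient $b_{k-1}$ needs to be pinned down.

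First I would recall, exactly as in Proposition~\ref{prop:poly:a}, that $\operatorname{Res}(\zeta(s+1)^k t^{-s})_{s=0}$ is a polynomial in $\log t$ of degree $k-1$ with leading coefficient $\frac{(-1)^{k-1}}{(k-1)!}$; this follows by multiplying $\zeta(s+1)^k = s^{-k} + O(s^{-k+1})$ by $t^{-s}=\sum_m \frac{(-\log t)^m}{m!}s^m$ and extracting the coefficient of $s^{-1}$. Next I would note that at $s=0$ the factors $\zeta(s)^i$, $\zeta(s+2)^{j+1}$, $\Gamma(s+1)$ are smooth, with $\zeta(0)^i=(-1/2)^i=(-1)^i/2^i$, $\zeta(2)^{j+1}=(\pi^2/6)^{j+1}=\pi^{2(j+1)}/6^{j+1}$, $\Gamma(1)=1$, so that
\[
  \operatorname{Res}\big(\zeta(s)^i\zeta(s+2)^{j+1}\zeta(s+1)^k\Gamma(s+1)\,t^{-s}\big)_{s=0}
  = \frac{(-1)^{i+k-1}\pi^{2(j+1)}}{6^{j+1}\,2^i\,(k-1)!}(\log t)^{k-1} + o\big((\log t)^{k-1}\big).
\]
Shifting $s$ by $1$, the left-hand side becomes $t\cdot\operatorname{Res}\big(L_P^*(s)t^{-s}\big)_{s=1}$, which gives the asserted value of $b_{k-1}$ for $L_P^*$; inserting the additional factor $1-2^{-s}$, whose value at $s=1$ is $1/2$, multiplies the leading coefficient by $1/2$ and yields the stated value for $L_Q^*$.

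The argument has no real obstacle — it is term-for-term parallel to Proposition~\ref{prop:poly:a}. The only care needed is in the $o\big((\log t)^{k-1}\big)$ bookkeeping: one must observe that the Taylor corrections to the analytic factors $\zeta(s)^i$, $\zeta(s+2)^{j+1}$, $\Gamma(s+1)$ (and $1-2^{-s}$) can only lower the degree in $\log t$, since $t^{-s}$ carries the full $(\log t)$-polynomial weight solely through the order of the pole in $s$; these corrections populate $b_0,\dots,b_{k-2}$ via the Stieltjes constants and derivatives of $\zeta$ at $0$ and $2$. It is also worth recording explicitly that $\zeta(0)=-1/2\neq 0$, so the pole order is exactly $k$ and $b_{k-1}\neq 0$, consistent with Proposition~\ref{prop:singular}.
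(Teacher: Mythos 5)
Your proposal is correct and is essentially identical to the paper's proof: you shift the residue computation to $s=0$, factor out $\zeta(s+1)^k$ as the source of the pole, evaluate the remaining smooth factors $\zeta(0)^i$, $\zeta(2)^{j+1}$, $\Gamma(1)$ (and $1-2^{-1}=1/2$ for $L_Q^*$) at $s=0$ after the shift, and read off the leading coefficient. The extra remarks on $o((\log t)^{k-1})$ bookkeeping and on $\zeta(0)\neq 0$ guaranteeing the exact pole order are fine but not a departure from the paper's argument.
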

We define $B := b_{k-1}$ (the dependence on $i,j,k$ and on $P$ or $Q$ is implicit).
\begin{proof}
Using exactly the same observation as from the previous proposition,
we first note that 
${\operatorname{Res}}\big(\zeta(s+1)^k t^{-s}\big)_{s=0}$ is a
polynomial in $\log{t}$ of degree $k-1$, with leading coefficient 
$\frac{(-1)^{k-1}}{(k-1)!}$.  So we have
$${\operatorname{Res}}\big(
 \zeta(s+1)^{k} t^{-s}\big)_{s=0} =
\frac{(-1)^{k-1}}{(k-1)!}(\log{t})^{k-1}
 + o((\log{t})^{k-1}).$$
At $s=0$, we note that 
$\zeta(s)^{i}$, $\zeta(s+2)^{j+1}$, 
and
$\Gamma(s+1)$ are all smooth,
and also that 
$\zeta(0) = -1/2$
and $\zeta(2) = \pi^{2}\!/6$
and $\Gamma(1) = 1$.
It follows that
$${\operatorname{Res}}\big(
\zeta(s)^i \zeta(s+2)^{j+1} \zeta(s+1)^{k} \Gamma(s+1) t^{-s}\big)_{s=0} =
\frac{(-1)^{i+k-1}\pi^{2(j+1)}}{6^{j+1}2^{i}(k-1)!}(\log{t})^{k-1}
 + o((\log{t})^{k-1}).$$
Shifting $s$ by 1, the LHS becomes 
${\operatorname{Res}}\big(
\zeta(s-1)^i \zeta(s+1)^{j+1} \zeta(s)^{k} \Gamma(s)
t^{-(s-1)}\big)_{s=1}
= t{\operatorname{Res}}\big(
L_P^*(s)t^{-s}\big)_{s=1}$ so we obtain
$${\operatorname{Res}}\big(
L_P^*(s)t^{-s}\big)_{s=1} =
\frac{1}{t}\bigg(\frac{(-1)^{i+k-1}\pi^{2(j+1)}}{6^{j+1}2^{i}(k-1)!}(\log{t})^{k-1}
 + o((\log{t})^{k-1})\bigg).$$
At $s=1$, we also note that 
$1 - 2^{-s} = 1/2$, so we obtain
$${\operatorname{Res}}\big(
L_Q^*(s)t^{-s}\big)_{s=1} =
\frac{1}{t}\bigg(\frac{(-1)^{i+k-1}\pi^{2(j+1)}}{6^{j+1}2^{i+1}(k-1)!}(\log{t})^{k-1}
 + o((\log{t})^{k-1})\bigg).$$

\end{proof}

\begin{proposition}
\label{prop:poly:c}
For all nonnegative $i$, $j$, $k$, there is a 
computable polynomial $c(t)=c_0+\ldots + c_{j+1}t^{j+1}$ such that
$${\operatorname{Res}}\big(L_P^*(s)t^{-s}\big)_{s=0} 
			= c(\log t)$$
and the value of $c_{j+1}$ is
\begin{equation}\label{definitionofCconstant}
\frac{(-1)^{i+j+k+1}}{2^{k}(j+1)!\!\ 12^{i}}.
\end{equation}
\end{proposition}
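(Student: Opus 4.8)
The plan is to compute the residue directly from the Laurent expansion of $L_P^*(s)=\zeta(s-1)^i\zeta(s+1)^{j+1}\zeta(s)^k\Gamma(s)$ at $s=0$, in the same spirit as the proofs of Propositions~\ref{prop:poly:a} and~\ref{prop:poly:b}; the only structural difference is that here the pole at $s=0$ is produced jointly by $\zeta(s+1)^{j+1}$ and $\Gamma(s)$ rather than by a single power of a zeta factor. First I would record that at $s=0$ the factors $\zeta(s-1)^i$ and $\zeta(s)^k$ are holomorphic, with values $\zeta(-1)^i=(-1/12)^i$ and $\zeta(0)^k=(-1/2)^k$; that $\Gamma(s)=1/s-\gamma+\cdots$ has a simple pole; and that $\zeta(s+1)=1/s+\sum_{n\geq 0}\frac{(-1)^n}{n!}\gamma_n s^n$ has a simple pole, so that $\zeta(s+1)^{j+1}$ has a pole of order $j+1$ with leading term $s^{-(j+1)}$. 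Multiplying the four factors then shows that $L_P^*(s)$ has a pole of order exactly $j+2$ at $s=0$ (consistent with Proposition~\ref{prop:singular}), with Laurent expansion $L_P^*(s)=\sum_{m\geq -(j+2)}\lambda_m s^m$ whose top coefficient is $\lambda_{-(j+2)}=\zeta(-1)^i\zeta(0)^k=(-1)^{i+k}/(12^i2^k)\neq 0$.

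Next I would expand $t^{-s}=e^{-s\log t}=\sum_{r\geq 0}\frac{(-\log t)^r}{r!}s^r$ and read off the coefficient of $s^{-1}$ in the product, obtaining
\begin{equation*}
\operatorname{Res}\big(L_P^*(s)t^{-s}\big)_{s=0}=\sum_{r=0}^{j+1}\lambda_{-r-1}\,\frac{(-\log t)^r}{r!},
\end{equation*}
which is manifestly a polynomial $c(\log t)$ of degree at most $j+1$, with $c_r=(-1)^r\lambda_{-r-1}/r!$. Since $\lambda_{-(j+2)}\neq 0$, the degree is exactly $j+1$ and
\begin{equation*}
c_{j+1}=\frac{(-1)^{j+1}}{(j+1)!}\,\lambda_{-(j+2)}=\frac{(-1)^{j+1}}{(j+1)!}\cdot\frac{(-1)^{i+k}}{12^i2^k}=\frac{(-1)^{i+j+k+1}}{2^k(j+1)!\,12^i},
\end{equation*}
as claimed. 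The word ``computable'' is justified because each $\lambda_m$ is a finite polynomial in $\gamma$, the Stieltjes constants $\gamma_n$, the values $\zeta(n)$ for $n\geq 2$ (entering through the Taylor expansion of $\Gamma$), and the derivatives of $\zeta$ at $-1$ and at $0$, all of which are effectively computable, hence so is every $c_r$.

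I do not anticipate a genuine obstacle: the argument is routine Laurent-coefficient bookkeeping, entirely parallel to Propositions~\ref{prop:poly:a} and~\ref{prop:poly:b}. The one point worth a moment's care is verifying that the order of the pole at $s=0$ is exactly $j+2$ and not less, which reduces to the observation $\zeta(-1)\zeta(0)=(-1/12)(-1/2)\neq 0$; this is precisely what guarantees both that $\deg c=j+1$ and that the displayed value of $c_{j+1}$ is the actual leading coefficient rather than $0$.
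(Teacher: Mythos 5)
Your proposal is correct and follows essentially the same route as the paper: the paper likewise observes that $\zeta(s-1)^i$ and $\zeta(s)^k$ are analytic at $s=0$ with values $(-1/12)^i$ and $(-1/2)^k$, and that $\operatorname{Res}\big(\zeta(s+1)^{j+1}\Gamma(s)t^{-s}\big)_{s=0}$ is a degree-$(j+1)$ polynomial in $\log t$ with leading coefficient $(-1)^{j+1}/(j+1)!$, then multiplies. You simply make the Laurent-coefficient bookkeeping (the identity $c_r=(-1)^r\lambda_{-r-1}/r!$ and the verification $\lambda_{-(j+2)}\neq 0$) explicit where the paper asserts it, which is a fine level of added detail but not a different argument.
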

We define $C := c_{j+1}$ (the dependence on $i,j,k$ and on $P$ is implicit).
\begin{proof}
We first note that 
${\operatorname{Res}}\big(\zeta(s+1)^{j+1}\Gamma(s)t^{-s}\big)_{s=0}$ is a
polynomial in $\log{t}$ of degree $j+1$, with leading coefficient 
$\frac{(-1)^{j+1}}{(j+1)!}$.  So we have
$${\operatorname{Res}}\big(
 \zeta(s+1)^{j+1}\Gamma(s) t^{-s}\big)_{s=0} =
\frac{(-1)^{j+1}}{(j+1)!}(\log{t})^{j+1}
 + o((\log{t})^{j}).$$
At $s=0$, we note that 
$\zeta(s-1)^{i}$ and $\zeta(s)^{k}$, 
are both smooth,
and also that 
$\zeta(-1) = -1/12$
and $\zeta(0) = -1/2$.
It follows that
$${\operatorname{Res}}\big(
L_P^*(s)t^{-s}\big)_{s=0}
= {\operatorname{Res}}\big(
\zeta(s-1)^i \zeta(s+1)^{j+1} \zeta(s)^{k} \Gamma(s) t^{-s}\big)_{s=0} =
\frac{(-1)^{i+j+k+1}}{2^{k}(j+1)!\!\ 12^{i}}(\log{t})^{j+1}
 + o((\log{t})^{j}).$$
\end{proof}
We define $D := d_{j}$ (the dependence on $i,j,k$ and on $Q$ is implicit).

\begin{proposition}
\label{prop:poly:d}
For all nonnegative $i$, $j$, $k$, there is a 
computable polynomial $d(t)=d_0+\ldots + d_{j}t^{j}$ such that
$${\operatorname{Res}}\big(L_Q^*(s)t^{-s}\big)_{s=0} 
			= d(\log t)$$
and the value of $d_{j}$ is 
\begin{equation}\label{definitionofDconstant}
\ln(2)\frac{(-1)^{i+j+k}}{2^{k}j!\!\ 12^{i}} .
\end{equation}
\end{proposition}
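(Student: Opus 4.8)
The plan is to follow the same template as the proof of Proposition~\ref{prop:poly:c}, the one structural difference being that the factor $1-2^{-s}$ lowers the order of the pole of $L_Q^*(s)$ at $s=0$ by one. First I would record the local behavior at $s=0$: since $2^{-s}=e^{-s\ln 2}$, we have $1-2^{-s}=(\ln 2)s-\tfrac{(\ln 2)^2}{2}s^2+\cdots$, a simple zero, while $\Gamma(s)=\tfrac1s-\gamma+\cdots$ has a simple pole. Hence $(1-2^{-s})\Gamma(s)$ is analytic at $s=0$ with value $\ln 2$ there. Writing
\[
L_Q^*(s)=\big[(1-2^{-s})\Gamma(s)\big]\,\zeta(s+1)^{j+1}\,\zeta(s-1)^{i}\,\zeta(s)^{k},
\]
the only factor that is singular at $s=0$ is $\zeta(s+1)^{j+1}$, and it contributes a pole of order exactly $j+1$ because the remaining factors are nonzero there ($\zeta(-1)=-1/12$ and $\zeta(0)=-1/2$). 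Consequently $\operatorname{Res}\big(L_Q^*(s)t^{-s}\big)_{s=0}$ is a polynomial in $\log t$ of degree $j$; this defines $d(t)=d_0+\cdots+d_j t^j$, and all coefficients are computable from the Laurent/Taylor data of the factors (the lower ones would involve Stieltjes constants, but they are not needed here).

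To extract $d_j$, I would use the standard device $t^{-s}=\sum_{m\ge0}\tfrac{(-\log t)^m}{m!}s^m$. If $L_Q^*(s)=\sum_{n\ge-(j+1)}c_n s^n$ near $s=0$, then the residue at $s=0$ equals $\sum_{m=0}^{j}c_{-1-m}\tfrac{(-\log t)^m}{m!}$, so the coefficient of $(\log t)^j$ is $c_{-(j+1)}\tfrac{(-1)^j}{j!}$. The leading Laurent coefficient $c_{-(j+1)}$ is the product of the leading coefficients of the four factors at $s=0$: $\ln 2$ from $(1-2^{-s})\Gamma(s)$; $1$ from $\zeta(s+1)^{j+1}=s^{-(j+1)}(1+\gamma s+\cdots)^{j+1}$; $\zeta(-1)^i=(-1/12)^i$ from $\zeta(s-1)^i$; and $\zeta(0)^k=(-1/2)^k$ from $\zeta(s)^k$. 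Multiplying gives $c_{-(j+1)}=\ln 2\cdot\tfrac{(-1)^{i+k}}{2^{k}\,12^{i}}$, whence $d_j=\ln 2\cdot\tfrac{(-1)^{i+j+k}}{2^{k}\,j!\,12^{i}}$, as claimed.

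There is no serious obstacle here; the argument is a routine residue computation of exactly the same kind as in Propositions~\ref{prop:poly:a}--\ref{prop:poly:c}. The one point that must not be overlooked --- and the reason this case is stated separately from Proposition~\ref{prop:poly:c} --- is the cancellation between the simple zero of $1-2^{-s}$ and the simple pole of $\Gamma(s)$ at $s=0$: this is what drops the pole order from $j+2$ (the $P$ case) to $j+1$ and simultaneously produces the factor $\ln 2$ in the leading coefficient. I would make that cancellation explicit at the outset, after which the remainder is the same multiply-the-leading-terms bookkeeping used in the previous propositions.
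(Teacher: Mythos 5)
Your proof is correct and follows essentially the same route as the paper's: both identify that $(1-2^{-s})\Gamma(s)$ is analytic at $s=0$ with value $\ln 2$, leaving $\zeta(s+1)^{j+1}$ as the sole singular factor of order $j+1$, then multiply in the smooth values $\zeta(-1)^i=(-1/12)^i$ and $\zeta(0)^k=(-1/2)^k$ to obtain the leading coefficient. The only difference is that you make the cancellation and the $t^{-s}$-expansion bookkeeping fully explicit, whereas the paper states the residue of the $(1-2^{-s})\zeta(s+1)^{j+1}\Gamma(s)t^{-s}$ block directly.
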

\begin{proof}
We first note that 
${\operatorname{Res}}\big((1-2^{-s})\zeta(s+1)^{j+1}\Gamma(s)t^{-s}\big)_{s=0}$ is a
polynomial in $\log{t}$ of degree $j$, with leading coefficient 
$\ln(2)\frac{(-1)^{j}}{j!}$.  So we have
$${\operatorname{Res}}\big(
(1-2^{-s})\zeta(s+1)^{j+1} \Gamma(s)t^{-s}\big)_{s=0}\\
=
\ln(2)\frac{(-1)^{j}}{j!}(\log{t})^{j}
 + o((\log{t})^{j-1}).$$
At $s=0$, we note that 
$\zeta(s-1)^{i}$ and $\zeta(s)^{k}$, 
are both smooth,
and also that 
$\zeta(-1) = -1/12$
and $\zeta(0) = -1/2$.
It follows that
\begin{align*}
{\operatorname{Res}}\big(
L_Q^*(s)t^{-s}\big)_{s=0}
&= {\operatorname{Res}}\big(
(1-2^{-s})\zeta(s-1)^i \zeta(s+1)^{j+1} \zeta(s)^{k} \Gamma(s)t^{-s}\big)_{s=0}\\
&=
\ln(2)\frac{(-1)^{i+j+k}}{2^{k}j!\!\ 12^{i}}(\log{t})^{j}
 + o((\log{t})^{j-1}).
\end{align*}
\end{proof}

\begin{proposition}
\label{prop:poly:h}
Fix an admissible triple $(i,j,k)$. Then the following hold:
\begin{itemize}
\item
	If $i=0$, then ${\operatorname{Res}}\big(L^*(s)t^{-s}\big)_{s=-1}$
	is a linear monomial in $t$ (that depends on whether we are analyzing $\mathcal P$ or $\mathcal Q$).
\item
	Let $n \geq 1$. If $k=0$, then ${\operatorname{Res}}\big(L^*(s)t^{-s}\big)_{s=-2n}$ is a degree $2n$ monomial in $t$ (that depends on whether we are analyzing $\mathcal P$ or $\mathcal Q$).
\end{itemize}
\end{proposition}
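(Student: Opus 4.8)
The plan is to exploit the fact that, in both cases covered by the proposition, the singularity of $L^*(s)$ at the relevant point is a \emph{simple} pole contributed solely by $\Gamma(s)$; the argument then runs exactly parallel to the proofs of Propositions~\ref{prop:poly:c} and~\ref{prop:poly:d}, but is shorter because there is no higher-order pole to expand. First I would note that when $i=0$ the factor $\zeta(s-1)^i$ is identically $1$, so by item~(\ref{prop:singular:i0}) of Proposition~\ref{prop:singular} the pole of $\Gamma(s)$ at $s=-1$ survives and is simple; and when $k=0$ the factor $\zeta(s)^k$ is identically $1$, so by item~(\ref{prop:singular:k0}) the pole of $\Gamma(s)$ at $s=-2n$ survives and is simple, there being no trivial zero of $\zeta(s)$ present to cancel it. In each case all remaining factors of $L^*(s)$, including $1-2^{-s}$ in the $\mathcal Q$ version, are holomorphic and nonzero at the pole.

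Next I would invoke two elementary facts: for a function $f$ with a simple pole at $s_0$, $\operatorname{Res}\big(f(s)t^{-s}\big)_{s=s_0}=t^{-s_0}\operatorname{Res}\big(f(s)\big)_{s=s_0}$, so the output is automatically a single monomial $t^{-s_0}$ times a constant; and $\operatorname{Res}\big(\Gamma(s)\big)_{s=-m}=(-1)^m/m!$ for integers $m\geq 0$. Factoring $\Gamma(s)$ out of $L_P^*(s)=\zeta(s-1)^i\zeta(s+1)^{j+1}\zeta(s)^k\Gamma(s)$ and evaluating the remaining analytic factors at the pole then gives, for $i=0$,
$$\operatorname{Res}\big(L_P^*(s)t^{-s}\big)_{s=-1}=-\,\zeta(0)^{j+1}\zeta(-1)^k\,t,$$
which is a linear monomial, with the $\mathcal Q$ analogue obtained by multiplying by $(1-2^{-s})\big|_{s=-1}=-1$; and for $k=0$ and $n\geq 1$,
$$\operatorname{Res}\big(L_P^*(s)t^{-s}\big)_{s=-2n}=\frac{\zeta(-2n-1)^i\,\zeta(1-2n)^{j+1}}{(2n)!}\,t^{2n},$$
a degree-$2n$ monomial, with the $\mathcal Q$ analogue carrying the extra factor $(1-2^{-s})\big|_{s=-2n}=1-4^n$. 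In every instance the coefficient is an explicit computable constant that depends on $\mathcal P$ versus $\mathcal Q$ only through the value of $1-2^{-s}$ at the pole.

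I do not expect any real obstacle here: the substance is already in the simple-pole assertions of Proposition~\ref{prop:singular}, and what remains is bookkeeping. The one point deserving a line of care is the nonvanishing of the residual zeta factors, which holds because $0$, $-1$, $-2n-1$, and $1-2n$ all avoid the trivial zeros $\{-2,-4,-6,\dots\}$ of $\zeta$ — indeed $1-2n=-(2n-1)$ and $-2n-1=-(2n+1)$ are odd negative integers — so the pole of $\Gamma(s)$ is of order exactly one and the residue is genuinely a monomial of the stated degree.
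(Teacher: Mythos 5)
Your argument is correct, and it is precisely the kind of bookkeeping the paper leaves implicit: the paper states Proposition~\ref{prop:poly:h} without supplying a proof, evidently regarding it as an immediate consequence of Proposition~\ref{prop:singular} in the same spirit as Propositions~\ref{prop:poly:a}--\ref{prop:poly:d}. You correctly identify that, under $i=0$ (resp.\ $k=0$), the pole at $s=-1$ (resp.\ $s=-2n$) is a simple pole of $\Gamma(s)$ not cancelled by any zeta factor, so the residue of $f(s)t^{-s}$ is $t^{-s_0}$ times the residue of $f$; and your evaluations of the remaining factors, including $(1-2^{-s})\big|_{s=-1}=-1$ and $(1-2^{-s})\big|_{s=-2n}=1-4^{n}$ for $\mathcal Q$, are right. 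Your closing observation that the surviving arguments $0,-1,-(2n+1),-(2n-1)$ avoid the trivial zeros of $\zeta$ is exactly the point needed to guarantee the coefficient is nonzero, so that the residue is a genuine monomial of the stated degree rather than the zero polynomial. Nothing to fix.
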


When necessary, we will add a subscript in the polynomials $a(t)$ and $b(t)$
to indicate which class we are considering, that is, we will write $a_P (z)$,
$a_Q (z)$, $b_P (z)$ and $b_Q (z)$.
Only in \emph{four} exceptional cases we will be able to find
the asymptotic growth of the coefficients of $P(z)$ namely
when the admissible triple is $(1,0,0)$, $(0,1,0)$, $(0,0,1)$, and $(1,0,1)$, and similarly, only in \emph{five} exceptional cases we will be able to find the asymptotic growth of the coefficients of $Q(z)$, namely when the admissible triple is one of the previous four, and, in addition, when the admissible triple is $(0,2,0)$.
For all these cases, we will need all the coefficients of the corresponding polynomials.
Tables \ref{table:polynomialsP} and \ref{table:polynomialsQ} show the polynomials in Propositions \ref{prop:poly:a}, \ref{prop:poly:b}, \ref{prop:poly:c} and \ref{prop:poly:d}.
for all these cases.

\begin{table}[h!]
\centering
	\begin{tabular}{|c | c | c | c |}
		\hline
		$(i,j,k)$ & $a(z)$ & $b(z)$ & $c(z)$\\
		\hline \hline \hline
		$(1,0,0)$ & $\zeta(3)$ & 
		& $\zeta'(-1)+\frac{1}{12}z$ \\
		\hline
		$(1,0,1)$ & $\frac{\pi^2\zeta(3)}{6}$ 
		& $-\frac{\pi^2}{12}$ 
		& $-\frac{\zeta'(-1)}{2} + \frac{\log (2\pi)}{24} - \frac{1}{24}z$ \\
		\hline \hline
		$(0,0,1)$ & 
		& $\frac{\pi^2}{6}$ 
		& $-\frac{\log(2\pi)}{2} + \frac{1}{2}z$ \\
		\hline \hline
		$(0,1,0)$ & & 
		& $\frac{\pi^2}{12}-\frac{\gamma^2}{2} - 2\gamma_1 - \gamma z + \frac{1}{2}z^2$ \\
		\hline
	\end{tabular}
	\caption{Polynomials in Propositions \ref{prop:poly:a}, \ref{prop:poly:b} and \ref{prop:poly:c} associated to the first admissible triples for the class $\mathcal P$.}
	\label{table:polynomialsP}
\end{table}

\begin{table}[h!]
\centering
	\begin{tabular}{|c | c | c | c |}
		\hline
		$(i,j,k)$ & $a(z)$ & $b(z)$ & $d(z)$\\
		\hline \hline \hline
		$(1,0,0)$ & $\frac{3\zeta(3)}{4}$ & 
		& $-\frac{\log 2}{12}$ \\
		\hline
		$(1,0,1)$ & $\frac{\pi^2\zeta(3)}{8}$ 
		& $-\frac{\pi^2}{24}$ 
		& $\frac{\log 2}{24}$ \\
		\hline \hline
		$(0,0,1)$
		& & $\frac{\pi^2}{12}$ 
		& $-\frac{\log(2)}{2}$ \\
		\hline \hline
		$(0,1,0)$ & & 
		& $\gamma\log 2 - \frac{\log^22}{2} -(\log 2)z$ \\
		\hline
		$(0,2,0)$ & & 
		& $\frac{\gamma^2\log 2}{2} + \frac{\pi^2\log 2}{12}
- \gamma\log^22+\frac{\log^32}{6} - 3\gamma_1\log 2 + \big(\frac{\log^22}{2} - 2\gamma\log 2\big)z
		+ \frac{\log 2}{2}z^2$ \\
		\hline
	\end{tabular}
	\caption{Polynomials in Proposition \ref{prop:poly:a}, \ref{prop:poly:b} and \ref{prop:poly:d} associated to the first admissible triples for the class $\mathcal Q$.}
	\label{table:polynomialsQ}
\end{table}

\begin{proposition}
\label{prop:asymp}
	Let $g(z)$ be equal to either $c(z)$ or $d(z)$ depending on whether
	we are analyzing $\mathcal P$ or $\mathcal Q$, respectively, and
	also let $h(z)$ be the series formed by the sum of the residues at
	$s=-1$ and at $s=-2n$ with $n\geq 1$ (note that $h(z) = 0$ if $i>0$ and $k>0$,
	see \ref{prop:singular:i0} and \ref{prop:singular:k0} in Prop. \ref{prop:singular}).
	As $t\to 0$ the following hold:
	\begin{itemize}
	\item
		If $i\geq 1$, then $L(t) \sim \frac{a(\log t)}{t^2} + \frac{b(\log t)}{t} + g(\log t) + h(t)$.
	\item
		If $i=0$ and $k\geq 1$, then $L(t) \sim \frac{b(\log t)}{t} + g(\log t)+ h(t)$.
	\item
		If $i=k=0$, then $L(t) \sim g(\log t) + h(t)$.
	\end{itemize} 
\end{proposition}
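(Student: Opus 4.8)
The plan is to reconstruct $L(t)$ from its Mellin transform $L^{*}(s)$ (Proposition~\ref{prop:MellinMultiSet}) and to read the expansion off from the poles of $L^{*}(s)$ by shifting the Mellin inversion contour to the left. First I would pin down the fundamental strip. From the product formulas \eqref{eq:GeneralPartitionMulti} and \eqref{eq:GeneralPartitionPower} one has $P(e^{-t}),Q(e^{-t})\to 1$ as $t\to\infty$, so $L(t)$ decays exponentially at $+\infty$ and $\int_{0}^{\infty}L(t)t^{s-1}\,dt$ converges for all large $\Re(s)$; near $t=0$ the rightmost pole of $L^{*}(s)$ — which by Proposition~\ref{prop:singular} lies at $s=2$ when $i\geq 1$, at $s=1$ when $i=0,k\geq1$, and at $s=0$ when $i=k=0$ — forces $L(t)$ to grow like $t^{-\alpha}$ up to a power of $\log(1/t)$, with $\alpha\in\{2,1,0\}$ respectively. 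So the fundamental strip is $\langle\alpha,+\infty\rangle$, and for any $c>\alpha$ the inversion theorem gives $L(t)=\frac{1}{2\pi i}\int_{c-i\infty}^{c+i\infty}L^{*}(s)t^{-s}\,ds$.

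Next I would push this contour across all poles of $L^{*}(s)$ toward $\Re(s)=-\infty$. This move is legitimate because $L^{*}(s)$ decays exponentially along vertical lines: by Stirling, $|\Gamma(\sigma+i\tau)|$ decays like $e^{-\pi|\tau|/2}$, which dominates the at-most-polynomial growth of $\zeta(s-1)^{i}\zeta(s+1)^{j+1}\zeta(s)^{k}$ and the boundedness of $1-2^{-s}$ in every vertical strip. Hence the horizontal connecting segments vanish in the limit and, for every $N$, the integral along $\Re(s)=-N$ is $O(t^{N})$; consequently $L(t)$ equals the sum of the residues of $L^{*}(s)t^{-s}$ at the poles to the left of the strip, up to an error that is $o(t^{M})$ for every $M$ (so the residue series gathered into $h(t)$ is a bona fide asymptotic — in fact convergent — series in $t$).

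It then remains to collect those residues using the computations already in hand. By Proposition~\ref{prop:singular}, the poles to the left of the strip are $s=2$ (order $i$, present iff $i\geq1$), $s=1$ (order $k$, present iff $k\geq1$), $s=0$ (order $j+2$ for $\mathcal P$ and $j+1$ for $\mathcal Q$), $s=-1$ (simple, present iff $i=0$), and $s=-2n$, $n\geq1$ (simple, present iff $k=0$), with the odd negative integers contributing nothing since the simple pole of $\Gamma$ there is cancelled by the zero of $\zeta(s+1)^{j+1}$ (item~\ref{prop:singular:even}). Propositions~\ref{prop:poly:a}, \ref{prop:poly:b}, \ref{prop:poly:c}, \ref{prop:poly:d} and \ref{prop:poly:h} identify these residues as $a(\log t)/t^{2}$, $b(\log t)/t$, $g(\log t)$ (equal to $c(\log t)$ for $\mathcal P$ and $d(\log t)$ for $\mathcal Q$), a linear monomial in $t$, and the monomials in $t^{2n}$; the last two are precisely what $h(t)$ bundles together. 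Finally, checking in each of the three regimes $i\geq1$, $i=0$ with $k\geq1$, and $i=k=0$ which of the top three poles actually occur — with the convention that $b\equiv0$ when $k=0$, since then $L^{*}$ has no pole at $s=1$ — yields exactly the three stated expansions.

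The step I expect to be the main obstacle is the justification of the leftward shift in the second paragraph: one must control the remainder integral after an arbitrarily long translation of the contour, and simultaneously keep track of the fact that the higher-order poles at $s=2$ (when $i>1$), $s=1$ (when $k>1$) and $s=0$ produce residues that are genuine polynomials in $\log t$, not monomials — which is exactly why $a(z),b(z),c(z),d(z)$ are polynomials, and why this step leans on Propositions~\ref{prop:poly:a}--\ref{prop:poly:d} rather than re-deriving them.
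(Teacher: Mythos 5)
Your overall strategy is exactly the standard Mellin-inversion argument that the paper is implicitly invoking (the proposition is stated without proof, immediately after the residue calculations of Propositions~\ref{prop:poly:a}--\ref{prop:poly:h}, and the paper opens Section~\ref{sec:asymptotic} by declaring that the analysis follows \cite{FlajoletSedewick09}): identify the fundamental strip, justify the leftward contour shift via the exponential decay of $\Gamma$ on vertical lines, and read the asymptotic expansion of $L(t)$ as $t\to 0$ off the residues. Your bookkeeping of which poles contribute in each regime, and the cancellations from \ref{prop:singular:i0}, \ref{prop:singular:k0} and \ref{prop:singular:even}, is also right.

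One parenthetical claim is wrong and should be dropped: the assertion that the tail series $h(t)$ is ``in fact convergent.'' The residue of $L^{*}(s)t^{-s}$ at $s=-2n$ carries a factor $\operatorname*{Res}_{s=-2n}\Gamma(s)=\frac{1}{(2n)!}$, but it also carries $\zeta(-2n-1)^{i}\,\zeta(-2n+1)^{j+1}$ (and for $L_{Q}^{*}$ an extra bounded factor $1-2^{2n}$). Since $\zeta(1-2m)=-B_{2m}/(2m)$ and $|B_{2m}|\sim 2\,(2m)!/(2\pi)^{2m}$, each zeta factor contributes roughly $(2n)!/(2\pi)^{2n}$, and the $1/(2n)!$ from $\Gamma$ only cancels one of them. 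Because admissibility forces $i+j+k\geq 1$, whenever the poles at $s=-2n$ survive (i.e.\ $k=0$, hence $i\geq 1$ or $j\geq 1$) the residues grow at least like a power of $(2n)!$, so $h(t)$ is a divergent \emph{asymptotic} series, not a convergent one. The correct formulation is what you state just before: for every $N$, the truncated sum of residues approximates $L(t)$ with error $O(t^{N})$ coming from the shifted contour. That is all the proposition needs, so the rest of your argument stands unchanged once this claim is removed.
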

Henceforth we will systematically divide our analysis according to the following three types of admissible triples:
$(i\geq 1, j, k)$, $(0,j,k\geq 1)$ and $(0,j\geq 1, 0)$.

\subsection{Saddle point equations}

Let $F(z)$ denote either $P(z)$ or $Q(z)$. Now using $t=-\log z$ as $z \to 1$ yields
\begin{align}
\label{eq:FExpansion}
	F(z) \sim \exp\left(
	\frac{a\big(\log (-\log z)\big)}{(-\log z)^2} + \frac{b\big(\log (-\log z)\big)}{-\log z} + g\big(\log (-\log z) \big) + h(-\log z)\big)
	\right).
\end{align}
Let $f(z):=\log F(z)-n\log(z)$.
The saddle is located at radius ``$r$'' with $rf'(z) = r\frac{F'(r)}{F(r)}=n$. We have
the following elementary result that gives a general form:

\begin{proposition}
\label{prop:Pderivative1}
\begin{align}
\label{eq:Pderivative1}
	z\frac{F'(z)}{F(z)} & = \frac{1}{\log^3(1/z)} \cdot \left(2a\big( \log\log(1/z)\big)
	- a'\big( \log\log(1/z)\big) \right) \\
	\label{eq:Pderivative2}
	& \quad + \frac{1}{\log^2(1/z)} \cdot \left(b\big( \log\log(1/z)\big) - b'\big( \log\log(1/z)\big) \right)\\
	\label{eq:Pderivative3}
	& \quad - \frac{1}{\log(1/z)} \cdot g'\big( \log\log(1/z)\big) \\
	& \quad -h'\big( \log(1/z)\big).
\end{align}
\end{proposition}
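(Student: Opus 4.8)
The plan is to differentiate the asymptotic expansion for $F(z)$ given in equation \eqref{eq:FExpansion} term by term. Write $u \triangleq \log(1/z) = -\log z$, so that $z = e^{-u}$ and $z\frac{d}{dz} = -\frac{d}{du}$ (the chain rule: $\frac{du}{dz} = -1/z$, hence $z\frac{d}{dz}\phi(u) = -\phi'(u)$ for any differentiable $\phi$). From \eqref{eq:FExpansion} we have, as $z\to 1$ (equivalently $u\to 0^+$),
\begin{align*}
	\log F(z) \sim \frac{a(\log u)}{u^2} + \frac{b(\log u)}{u} + g(\log u) + h(u),
\end{align*}
and since $z\frac{F'(z)}{F(z)} = z\frac{d}{dz}\log F(z) = -\frac{d}{du}\log F(z)$, it suffices to compute $-\frac{d}{du}$ of each of the four summands.

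The routine part is the differentiation itself. For the first term, $\frac{d}{du}\bigl(u^{-2}a(\log u)\bigr) = -2u^{-3}a(\log u) + u^{-2}a'(\log u)\cdot u^{-1} = u^{-3}\bigl(-2a(\log u) + a'(\log u)\bigr)$, so negating gives $u^{-3}\bigl(2a(\log u) - a'(\log u)\bigr)$, which is line \eqref{eq:Pderivative1} once we substitute $u = \log(1/z)$ and $\log u = \log\log(1/z)$. For the second term, $\frac{d}{du}\bigl(u^{-1}b(\log u)\bigr) = -u^{-2}b(\log u) + u^{-1}b'(\log u)\cdot u^{-1} = u^{-2}\bigl(-b(\log u) + b'(\log u)\bigr)$, and negating yields $u^{-2}\bigl(b(\log u) - b'(\log u)\bigr)$, which is line \eqref{eq:Pderivative2}. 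For the third term, $\frac{d}{du}g(\log u) = g'(\log u)\cdot u^{-1}$, so negating gives $-u^{-1}g'(\log u)$, which is line \eqref{eq:Pderivative3}. For the fourth term, $\frac{d}{du}h(u) = h'(u)$, so negating gives $-h'(u) = -h'\big(\log(1/z)\big)$, the last line. Summing these four contributions and rewriting $u$ as $\log(1/z)$ throughout reproduces the claimed formula exactly.

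The one point requiring a small justification — and the only genuine obstacle — is that differentiating an asymptotic equivalence is not valid in general, so one must argue that it is legitimate here. The cleanest way is to observe that the expansion \eqref{eq:FExpansion} comes from the Mellin inversion in Proposition \ref{prop:MellinMultiSet} together with the residue computations in Propositions \ref{prop:poly:a}--\ref{prop:poly:h}, and that the error term in $\log F(z)$ is controlled by the remainder integral $\frac{1}{2\pi i}\int_{(\rho)} L^*(s)\,u^{-s}\,ds$ along a vertical line $\Re s = \rho$ to the left of all the poles used; this integral, and crucially its derivative in $u$ (which merely inserts a factor $-s/u$ and shifts the line), is $O(u^{-\rho})$ for $u\to 0^+$, i.e.\ of smaller order than every displayed term and its derivative. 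Hence the term-by-term differentiation is justified as an honest asymptotic identity and not merely a formal one. I would state this remark briefly and then present the differentiation above as the body of the proof.
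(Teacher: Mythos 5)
Your calculation is correct and is precisely the ``elementary'' argument the paper implicitly has in mind (the paper states the proposition without proof, immediately after writing ``We have the following elementary result''). Substituting $u=\log(1/z)$, using $z\frac{d}{dz}=-\frac{d}{du}$, and differentiating the four summands of $\log F$ termwise reproduces the four displayed lines exactly, and your chain-rule bookkeeping checks out in each case. The extra remark you add about the legitimacy of differentiating the Mellin-derived expansion — that the error is a shifted-contour integral $O(u^{-\rho})$ whose $u$-derivative merely picks up a factor $-s/u$ and so remains subdominant — is a genuine point the paper leaves tacit, and it is the right justification to supply; it also clarifies that the proposition's $=$ should be read in the same asymptotic sense as equation \eqref{eq:FExpansion} and Proposition \ref{prop:asymp}, since only $\sim$ follows from the residue analysis.
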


In general, all the terms numbered as \eqref{eq:Pderivative1}--\eqref{eq:Pderivative3} need to be taken into account in order to determine the asymptotic growth of the coefficients of $F(z)$.
We can neglect the contribution of $h(z)$. Consequently, as we have already mentioned, only in a very few cases we can explicitly solve \emph{the saddle point equation} \begin{align}
\label{eq:spe}
	r\frac{F'(r)}{F(r)}=n
\end{align}
and thus it is only in these cases that an explicit expression for the asymptotic growth of the coefficients of $F(z)$ can be obtained. Let us precisely identify the cases we can resolve:

\begin{corollary} We have the following:
\label{cor:saddle1P}
	\begin{itemize}
	\item
		If the admissible triple is either $(1,0,0)$, $(1,0,1)$,
                $(0,0,1)$, or $(0,1,0)$, then $r\frac{P'(r)}{P(r)}=n$ becomes a polynomial in $-\log r$
		of degree at most 3 and thus it can be solved.
	\item
		If the admissible triple is either $(1,0,0)$,
                $(1,0,1)$, $(0,0,1)$, $(0,1,0)$, or $(0,2,0)$,
		then $r\frac{Q'(r)}{Q(r)}=n$ becomes a polynomial in $-\log r$
		of degree at most 3 and thus it can be solved.
	\end{itemize}
\end{corollary}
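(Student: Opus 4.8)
The plan is to prove the corollary by a finite case analysis, since only the admissible triples $(1,0,0)$, $(1,0,1)$, $(0,0,1)$, $(0,1,0)$ (for $P$), and these together with $(0,2,0)$ (for $Q$), are involved. For each of them I would start from Proposition~\ref{prop:Pderivative1}: writing $u := -\log r = \log(1/r)$ and $v := \log\log(1/r) = \log u$, and taking $F$ to be $P$ or $Q$, that proposition expresses $r\,F'(r)/F(r)$ as
\begin{equation*}
\frac{2a(v)-a'(v)}{u^{3}} + \frac{b(v)-b'(v)}{u^{2}} - \frac{g'(v)}{u} - h'(u),
\end{equation*}
where $g=c$ for $P$ and $g=d$ for $Q$, and where by Proposition~\ref{prop:asymp} the $a$-term is present only when $i\geq 1$ and the $b$-term only when $k\geq 1$. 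The relevant polynomials $a$, $b$, $c$, $d$ for exactly these triples are the ones tabulated in Tables~\ref{table:polynomialsP} and~\ref{table:polynomialsQ}, and the structural point is that on this list $a$ and $b$ are either absent or constant while $g$ has degree at most $2$. Finally, by Proposition~\ref{prop:poly:h} the function $h$ is a sum of monomials in $u$ of degree $\geq 1$, so $h(u)=O(u)$ and $h'(u)=O(1)$ as $u\to 0^{+}$; since the saddle point equation~\eqref{eq:spe} sets the displayed quantity equal to $n\to\infty$, the term $h'(u)$ is of strictly lower order and may be dropped when solving for the saddle $r=r(n)$.

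After discarding $h'$, the equation $r\,F'(r)/F(r)=n$ reads $\frac{2a(v)-a'(v)}{u^{3}}+\frac{b(v)-b'(v)}{u^{2}}-\frac{g'(v)}{u}=n$, and I would substitute the table values case by case. For $(1,0,0)$, $(1,0,1)$, $(0,0,1)$ (and also for $(0,1,0)$ when $F=Q$), the polynomial $g$ has degree $\leq 1$, so $2a(v)-a'(v)$, $b(v)-b'(v)$ and $g'(v)$ are all genuine constants; multiplying through by $u^{3}$ then yields a polynomial identity in $u=-\log r$ of degree at most $3$. Explicitly, for $P$ one gets the cubic $n u^{3}+\tfrac{1}{12}u^{2}-2\zeta(3)=0$ for $(1,0,0)$, a cubic of the same shape for $(1,0,1)$, and the quadratic $n u^{2}+\tfrac12 u-\tfrac{\pi^{2}}{6}=0$ for $(0,0,1)$ (whose root recovers the classical Hardy--Ramanujan saddle); the $Q$-cases have the same shape with constants altered by the factor $1-2^{-s}$. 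Each such equation is explicitly solvable by the quadratic formula or Cardano's formula. For the two remaining cases, $(0,1,0)$ with $F=P$ and $(0,2,0)$ with $F=Q$, the polynomial $g$ has degree exactly $2$, so $g'(v)=\alpha+\beta v$ for explicit constants $\alpha,\beta$ read off from $c$ or $d$, and the equation becomes $\alpha+\beta\log u=-n u$, which is the standard normal form solved by the Lambert $\operatorname{W}$ function; for $(0,1,0)$ this yields $u=\operatorname{W}(e^{\gamma}n)/n$, i.e. $-\log r=w_{n}/n$ in the notation of Theorem~\ref{thm:KotesovecP}. In every case the saddle radius $r$ is thereby determined in closed form, which is the content of the corollary.

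The hard part will not be conceptual but bookkeeping, in two respects. First, one must check against Tables~\ref{table:polynomialsP}--\ref{table:polynomialsQ} that for every triple on the list $a$ and $b$ are indeed constant and $g$ has degree $\leq 2$; this is what prevents a power of $\log u$ from surviving after one clears the $u^{3}$ denominator (except in the two degree-$2$ cases, where it is absorbed into the Lambert-$\operatorname{W}$ substitution) and thereby pins the degree at $3$. Second, and more delicately, one must justify that using the asymptotic expansion~\eqref{eq:FExpansion} in place of an exact identity, together with the omission of $h'$, perturbs $r=r(n)$ only by an amount that is negligible for the downstream saddle point estimates (Theorem~\ref{thm:010P} and its analogues) — this is the one place where an inequality, rather than an equality, is genuinely at work. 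Granting these two points, the corollary follows by writing down, in each of the finitely many cases, the explicit solution $r(n)$ of the reduced equation.
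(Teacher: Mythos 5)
Your proposal is correct and takes essentially the same route as the paper; the paper's own proof is just the one-line statement that the corollary follows from Propositions~\ref{prop:poly:a}--\ref{prop:poly:h}, whereas you carry out the underlying case check explicitly, using Proposition~\ref{prop:Pderivative1} together with Tables~\ref{table:polynomialsP}--\ref{table:polynomialsQ}. Your explicit cubics and quadratic (e.g.\ $n u^{3}+\tfrac{1}{12}u^{2}-2\zeta(3)=0$ for $(1,0,0)$ in the $P$ case, and $n u^{2}+\tfrac12 u-\tfrac{\pi^{2}}{6}=0$ for $(0,0,1)$) check out against the tabulated polynomials, and you rightly observe that in the two degree-$2$ cases, $(0,1,0)$ for $P$ and $(0,2,0)$ for $Q$, the equation is not literally a polynomial in $-\log r$ but a Lambert-$\operatorname{W}$ normal form --- a small imprecision in the corollary's wording that the paper glosses over but you handle correctly, consistent with how Theorem~\ref{thm:010P} actually solves that case.
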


\begin{proof}
All follow from Propositions \ref{prop:poly:a}, \ref{prop:poly:b}, \ref{prop:poly:c}, \ref{prop:poly:d} and \ref{prop:poly:h}.
\end{proof}

Nevertheless, as we will see, it is possible to obtain the
asymptotic growth of the \emph{logarithm} of the coefficients for every admissible triple.
The next section contains the case studies where we solve the saddle point equation
and deduce the asymptotic growth of the coefficients for all the cases in Corollary \ref{cor:saddle1P}.
Then in section \S \ref{sec:general} we present the general result
that gives the asymptotic growth of the logarithm of the coefficients for every admissible triple.

\section{Case Studies: First-Order Asymptotic Approximation.}

In this section we deduce the asymptotic growth of the coefficients
in all the cases in Corollary \ref{cor:saddle1P} that are not known
(Table \ref{tab:asymptoticsOEIS} shows the current status of this estimates in OEIS).
We first recall the following:

\begin{remark}
\label{rem:Lambert}
	The solution to $x = be^{cx}$ is
	$x = - \frac{1}{c}\operatorname{W}(-bc)$,
	which is also the solution to $\log x = \log b + cx$.
\end{remark}

\begin{theorem}
\label{thm:010P}
	If the admissible triple is $(0,1,0)$, then
	\begin{align}
		[z^n]P(z)
		&\sim 	\frac{e^{\pi^{2}\!/12 - \gamma^{2}\!/2 - 2\gamma_{1} + \gamma}
\exp\left(
        \log^{2}( \operatorname{W}(e^\gamma n)/n  )/2\right)}
{\sqrt{2\pi}\big( \log(e^\gamma n)/n \big)^{\gamma}\sqrt{\log\big(\log(e^\gamma n)/n)\big)}}
	\end{align}
\end{theorem}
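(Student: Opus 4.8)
The plan is to apply the saddle point method on the circle $|z|=r$, taking as the only analytic input the singular expansion of $\log P(z)$ near $z=1$ furnished by the Mellin analysis of Section~\ref{sec:asymptotic}. For the triple $(0,1,0)$ we have $i=k=0$ and $L_P^*(s)=\zeta(s+1)^2\Gamma(s)$, so by Proposition~\ref{prop:singular} the dominant singularity of $L_P^*$ is the pole of order $3$ at $s=0$, while the poles at $s=-1$ and $s=-2n$ ($n\ge 1$) contribute only positive powers of $t$. Hence Propositions~\ref{prop:poly:c} and~\ref{prop:asymp}, using Table~\ref{table:polynomialsP}, give, as $t\to 0$,
\[
L_P(t)=\log P(e^{-t})=\tfrac12\log^2 t-\gamma\log t+c+h(t),\qquad c:=\tfrac{\pi^2}{12}-\tfrac{\gamma^2}{2}-2\gamma_1,
\]
with $h(t)=O(t)$. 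This is exactly \eqref{eq:FExpansion} with $a\equiv b\equiv 0$ and $g(x)=c-\gamma x+\tfrac12 x^2$.

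Next I would locate the saddle. With $f(z):=\log P(z)-n\log z$ the saddle point $r$ solves $rP'(r)/P(r)=n$, and Proposition~\ref{prop:Pderivative1} (with $a\equiv b\equiv 0$, $g'(x)=x-\gamma$, and the negligible $h'$ dropped) reduces this to
\[
\frac{\gamma-\log\log(1/r)}{\log(1/r)}=n .
\]
Setting $t:=\log(1/r)$ and $u:=nt$, the equation reads $u+\log u=\gamma+\log n=\log(e^\gamma n)$, i.e.\ $ue^u=e^\gamma n$, so by Remark~\ref{rem:Lambert} $u=\operatorname{W}(e^\gamma n)=:w_n$; thus $t=\log(1/r)=w_n/n$ and $r=e^{-w_n/n}$.

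Then I would carry out the Gaussian step. A direct computation gives $\beta(r):=\bigl(z\tfrac{d}{dz}\bigr)^2\log P(z)\big|_{z=r}=(1+\gamma-\log t)/t^2$, and the saddle relation $\gamma-\log t=nt=w_n$ makes this $\beta(r)=(1+w_n)(n/w_n)^2\sim n^2/w_n$. Combining the saddle point estimate $[z^n]P(z)\sim P(r)r^{-n}/\sqrt{2\pi\beta(r)}$ with $r^{-n}=e^{nt}=e^{w_n}$, with $P(r)\sim e^{c}(w_n/n)^{-\gamma}\exp\!\bigl(\tfrac12\log^2(w_n/n)\bigr)$, and with $\sqrt{2\pi\beta(r)}\sim n\sqrt{2\pi/w_n}$, and then simplifying through the Lambert identity $w_ne^{w_n}=e^\gamma n$ (so that $e^{w_n}\sqrt{w_n}/n=e^\gamma/\sqrt{w_n}$), yields
\[
[z^n]P(z)\sim\frac{e^{c+\gamma}(w_n/n)^{-\gamma}\exp\!\bigl(\tfrac12\log^2(w_n/n)\bigr)}{\sqrt{2\pi\,w_n}} .
\]
Since $w_n=\operatorname{W}(e^\gamma n)$ satisfies $w_n\sim\log(e^\gamma n)$ and $w_n\sim\log\!\bigl(n/\log(e^\gamma n)\bigr)$ (and $w_n\sim\log n$), replacing $w_n$ by these asymptotically equivalent expressions puts the answer in the stated form, and also yields the consequence $\log([z^n]P(z))\sim(\log^2 n)/2$.

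The main obstacle is not the algebra above but the rigorous justification of the saddle point estimate itself. One must show that the Cauchy integral $\tfrac{1}{2\pi}\int_{-\pi}^{\pi}P(re^{i\theta})e^{-in\theta}\,d\theta$ concentrates in a shrinking arc about $\theta=0$; this needs an upper bound on $|P(re^{i\theta})|$, i.e.\ on $\sum_{d,\ell\ge1}\tfrac{r^{d\ell}\cos(d\ell\theta)}{d\ell}$, for $\theta$ bounded away from $0$, showing it falls below $\log P(r)$ by a margin large compared with $\sqrt{\beta(r)}$ --- a Hayman-type admissibility argument for $\prod_{d\ge1}(1-z^d)^{-1/d}$ in the spirit of \cite{FlajoletSedewick09}. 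One must also verify that the $O(t)$ remainder $h$ and its first two derivatives are uniformly negligible near $z=r$, so that they perturb neither the saddle location (to the needed order) nor the width $\beta(r)$; this is routine here given the exponential decay of $\Gamma$ in vertical strips. The remaining substitutions --- plugging in $c(z)$ from Table~\ref{table:polynomialsP} and the Lambert-$\operatorname{W}$ bookkeeping --- are routine.
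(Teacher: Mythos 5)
Your proof is correct and follows essentially the same route as the paper's: the Mellin-derived expansion $L_P(t)\sim\tfrac12\log^2 t-\gamma\log t+c$ from Propositions~\ref{prop:poly:c} and~\ref{prop:asymp}, the Lambert-$\operatorname{W}$ solution $r=e^{-\operatorname{W}(e^\gamma n)/n}$ of the saddle equation via Remark~\ref{rem:Lambert}, and the Gaussian approximation of the Cauchy integral, with you merely supplying a few steps the paper leaves implicit, namely $\beta(r)=(1+\gamma-\log t)/t^2\sim n^2/w_n$ and the identity $w_n e^{w_n}=e^\gamma n$ used in the final simplification. As a small bonus, your intermediate form with denominator $\sqrt{2\pi w_n}$ sidesteps a sign slip in the paper's stated formula, whose factor $\log\bigl(\log(e^\gamma n)/n\bigr)$ under the square root is negative and should be read as $\log\bigl(n/\log(e^\gamma n)\bigr)\sim w_n$.
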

 
 \begin{proof}
From Table \ref{table:polynomialsP} we get
$c(z) =\frac{\pi^2}{12}-\frac{\gamma^2}{2} - 2\gamma_1 - \gamma z + \frac{1}{2}z^2$.
Then the saddle point equation becomes
$-\log(\log (1/z))/\log (1/z) + \gamma/\log (1/z)  = n,
$
which is equivalent to $\log u = \log e^\gamma -nu$
with $u=\log (1/z)$. Thus, with Remark \ref{rem:Lambert},
we obtain the saddle point
$
	r = \exp\left(-\operatorname{W}(e^\gamma n)/n\right)
$
and with it we get
\begin{align}
	\frac{1}{2\pi}e^{\log P(r) - n\log r} 
&\sim
	\frac{1}{2\pi}
\exp\left(
        \frac{\log^{2}( \operatorname{W}(e^\gamma n)/n  )}{2}\right)
( \operatorname{W}(e^\gamma n)/n )^{-\gamma}
e^{\pi^{2}\!/12 - \gamma^{2}\!/2 - 2\gamma_{1}}\exp\big( \operatorname{W}(e^\gamma n) \big)
\end{align}
and
\begin{align}
	\int_{-\pi}^\pi \exp\left( \frac{f''(r)}{2!}(re^{i\theta} - r)^2 + \ldots \right)d\theta
	& \sim
	\frac{\operatorname{W}(e^\gamma n)}{n}\sqrt{\frac{2\pi}{\log\big(\operatorname{W} (e^\gamma n)/n)\big)}}. 
\end{align}
Putting together the last two estimates above, since $\operatorname{W}(x) = \log x -\log\log x + o(x)$, we get,
after simplifications, the desired result.
\end{proof}

\begin{theorem}
	If the admissible triple is $(0,1,0)$, then
	\begin{align}
		[z^n]Q(z) \sim 
	\frac{2^{\gamma-(\log2)/2+1/2}}{ \sqrt{\pi}(\log 2)^{\log 2-1/2}}\cdot n^{\log 2-1}.
	\end{align}
\end{theorem}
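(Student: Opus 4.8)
The plan is to follow the proof of Theorem~\ref{thm:010P} almost verbatim, with the class switched from $\mathcal P$ to $\mathcal Q$; the analysis is in fact shorter because the relevant residue polynomial is now \emph{linear}. For the triple $(0,1,0)$ one has $i=k=0$, so $a(z)=b(z)=0$, and Table~\ref{table:polynomialsQ} gives $d(z)=\gamma\log 2-\frac{\log^2 2}{2}-(\log 2)\,z$ (consistent with Proposition~\ref{prop:poly:d}: $d$ has degree $j=1$ with leading coefficient $D=-\log 2$). By the third bullet of Proposition~\ref{prop:asymp}, $L_Q(t)\sim d(\log t)+h(t)$ as $t\to 0$; discarding $h$ and substituting $t=-\log z$ yields $\log Q(z)\sim \gamma\log 2-\frac{\log^2 2}{2}-(\log 2)\log(-\log z)$ as $z\to 1^-$.

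Next I would locate the saddle via Proposition~\ref{prop:Pderivative1}. With $a=b=0$ and $g=d$, and neglecting $h$, that proposition collapses to
\[
	z\frac{Q'(z)}{Q(z)}\ \sim\ -\frac{d'\big(\log\log(1/z)\big)}{\log(1/z)}\ =\ \frac{\log 2}{\log(1/z)},
\]
since $d'\equiv-\log 2$ is constant. Hence the saddle point equation $r\,Q'(r)/Q(r)=n$ simplifies to $\log(1/r)=(\log 2)/n$, i.e.\ $r=2^{-1/n}=\exp\!\big(-(\log 2)/n\big)$ in closed form. This is precisely the feature that makes $(0,1,0)$ tractable on the $\mathcal Q$ side: in contrast to Theorem~\ref{thm:010P}, where $c(z)$ is quadratic and Remark~\ref{rem:Lambert} is needed, here no Lambert $\operatorname{W}$ intervenes.

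Then I would run the saddle point method exactly as in Theorem~\ref{thm:010P}, writing $[z^n]Q(z)=\frac{1}{2\pi}\int_{-\pi}^{\pi}e^{f(re^{i\theta})}\,d\theta$ with $f(z)=\log Q(z)-n\log z$, and estimating the prefactor and the central Gaussian integral separately. For the prefactor, substituting $\log(1/r)=(\log 2)/n$ into the expansion of $\log Q(r)$ gives $\log Q(r)\sim\gamma\log 2-\frac{\log^2 2}{2}-(\log 2)(\log\log 2-\log n)$, while $-n\log r=\log 2$, so
\[
	\frac{1}{2\pi}\,e^{\log Q(r)-n\log r}\ \sim\ \frac{1}{2\pi}\,2^{\gamma-(\log 2)/2+1}\,(\log 2)^{-\log 2}\,n^{\log 2}.
\]
For the central integral, $r^2 f''(r)=\big[z\tfrac{d}{dz}(zQ'(z)/Q(z))\big]_{z=r}=\dfrac{\log 2}{\log^2(1/r)}=\dfrac{n^2}{\log 2}$, so $\int_{-\pi}^{\pi}\exp\!\big(\tfrac{f''(r)}{2!}(re^{i\theta}-r)^2+\cdots\big)\,d\theta\sim\sqrt{2\pi\log 2}\,/\,n$. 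Multiplying the two estimates, and simplifying with $\sqrt{2\pi}=2^{1/2}\sqrt{\pi}$ and $(\log 2)^{1/2-\log 2}=(\log 2)^{-(\log 2-1/2)}$, produces $\dfrac{2^{\gamma-(\log 2)/2+1/2}}{\sqrt{\pi}\,(\log 2)^{\log 2-1/2}}\,n^{\log 2-1}$, as claimed.

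The main obstacle, as throughout Section~\ref{sec:asymptotic}, is not the algebra but justifying the saddle point estimate itself: one must upgrade the real-axis Mellin asymptotics for $L_Q(t)$ to bounds valid on the circle $|z|=r$ (controlling $|Q(re^{i\theta})|$ for $\theta$ bounded away from $0$) so that the central arc dominates, and check that the discarded terms --- the $h$-series and the $o(1)$ remainder of Proposition~\ref{prop:poly:d} --- only perturb lower-order factors. Given the machinery already in place and the close parallel with Theorem~\ref{thm:010P}, this step is routine here; the only genuinely new content is the bookkeeping that collapses the various powers of $2$ and of $\log 2$ into the stated constant.
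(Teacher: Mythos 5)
Your proof is correct and follows the paper's own argument essentially verbatim: the same residue polynomial $d(z)$ from Table~\ref{table:polynomialsQ}, the same saddle point equation $(\log 2)/\log(1/r)=n$ giving $r=\exp(-(\log 2)/n)$, and the same split into the prefactor $\frac{1}{2\pi}e^{\log Q(r)-n\log r}$ and the Gaussian factor $\sqrt{2\pi\log 2}/n$. The closing remark about justifying the saddle point estimate on the full circle is a fair observation, but it is a gap shared with (not added to) the paper's proof.
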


\begin{proof}
In this case, from Table \ref{table:polynomialsQ} we get
$d(z) = \gamma\log 2 - \frac{\log^22}{2}-(\log 2)z$.
Then the saddle point equation $(\log 2)/\log (1/z)  = n$ yields
the asymptotic saddle point
$r = \exp\left(-(\log 2)/n\right)
$
and with it we get
\begin{align}
	\frac{1}{2\pi}e^{\log Q(r) - n\log r} \sim
	\frac{e^{\gamma\log2 - \frac{\log^22}{2}}}{2\pi (\log 2)^{\log 2}}\cdot n^{\log 2} \cdot 2
\end{align}
and
\begin{align}
	\int_{-\pi}^\pi \exp\left( \frac{f''(r)}{2!}(re^{i\theta} - r)^2 + \ldots \right)d\theta
	& \sim \frac{\sqrt{2\pi\log 2}}{n} 
\end{align}
Putting together the last two estimates above and simplifying yields the desired result.
\end{proof}


\begin{theorem}
	If the admissible triple is $(0,2,0)$, then
	\begin{align}
		[z^n]Q(z) &\sim 
	\frac{e^{d_0+d_1\log \theta(n)+d_2\log^2\theta(n)-d_1}}{2\sqrt{\pi}
\sqrt{d_2\log\big(\frac{2d_2}{n}\log\big( \frac{ne^{-d_1/(2d_2)}}{2d_2} \big)\big)}
}
\Big(\frac{2d_2}{n}\log\Big( \frac{ne^{-d_1/(2d_2)}}{2d_2} \Big)\Big)^{1-2d_2}
	\end{align}
	where $\theta(n) = \frac{2d_2}{n}\operatorname{W}\big( \frac{ne^{-d_1/(2d_2)}}{2d_2} \big)$.
\end{theorem}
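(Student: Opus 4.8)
The plan is to run the saddle-point machinery exactly as in the proofs of the two preceding theorems, now specialized to $(i,j,k)=(0,2,0)$. By Proposition~\ref{prop:asymp} (the $i=k=0$ case) we have $L_Q(t)\sim d(\log t)+h(t)$, where by Proposition~\ref{prop:poly:d} and Table~\ref{table:polynomialsQ} the polynomial $d(z)=d_0+d_1z+d_2z^2$ has $d_2=\tfrac{\log 2}{2}$ and $d_1=\tfrac{\log^2 2}{2}-2\gamma\log 2$ (with $d_0$ the constant listed there), and $h$ collects the residues at $s=-1$ and at $s=-2n$ ($n\ge 1$) identified in Proposition~\ref{prop:singular}. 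Since those residues are monomials in $t$ that vanish as $t\to 0^{+}$ while $d(\log t)\to+\infty$, the contribution of $h$ is negligible both for $\log Q(z)$ and, after differentiating, for $z\,Q'(z)/Q(z)$. Hence I take $\log Q(z)\sim d\big(\log\log(1/z)\big)$ and, invoking Proposition~\ref{prop:Pderivative1} with $a\equiv b\equiv 0$, $z\,Q'(z)/Q(z)\sim-\dfrac{d'(\log\log(1/z))}{\log(1/z)}=-\dfrac{d_1+2d_2\log\log(1/z)}{\log(1/z)}$.

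First I would locate the saddle. Writing $u=\log(1/r)$, the equation $r\,Q'(r)/Q(r)=n$ becomes $-\dfrac{d_1+2d_2\log u}{u}=n$, i.e. $\log u=-\dfrac{d_1}{2d_2}-\dfrac{n}{2d_2}\,u$, which is precisely the form solved in Remark~\ref{rem:Lambert} with $b=e^{-d_1/(2d_2)}$ and $c=-n/(2d_2)$. This yields $u=\theta(n)=\dfrac{2d_2}{n}\operatorname{W}\!\big(\tfrac{ne^{-d_1/(2d_2)}}{2d_2}\big)$ and $r=e^{-\theta(n)}$. I would record two consequences of the saddle equation that drive the final simplification: $-n\log r=n\theta(n)$, and $n\theta(n)=-d_1-2d_2\log\theta(n)$; writing $x:=\tfrac{ne^{-d_1/(2d_2)}}{2d_2}$ so that $n\theta(n)=2d_2\operatorname{W}(x)$, this last identity is just the defining relation $\operatorname{W}(x)=\log x-\log\operatorname{W}(x)$ in disguise. (Note $(0,2,0)$ is the only one of the explicitly solvable $\mathcal Q$ cases where $d$ is genuinely quadratic, so that $\operatorname{W}$ enters with a non-constant argument — unlike the $(0,1,0)$ case of $Q$, where $d'$ is constant and $r=e^{-(\log 2)/n}$.)

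Next I would assemble the standard two-factor saddle-point estimate (cf.\ \cite{FlajoletSedewick09} and the proofs of the $(0,1,0)$ cases), $[z^n]Q(z)\sim\frac{1}{2\pi}\,e^{\log Q(r)-n\log r}\int_{-\pi}^{\pi}\exp\!\big(\tfrac{f''(r)}{2}(re^{i\theta}-r)^2+\cdots\big)\,d\theta$. For the first factor, $\log Q(r)\sim d(\log\theta(n))=d_0+d_1\log\theta(n)+d_2\log^2\theta(n)$, and adding $-n\log r=n\theta(n)=-d_1-2d_2\log\theta(n)$ gives $e^{\log Q(r)-n\log r}\sim e^{\,d_0+d_1\log\theta(n)+d_2\log^2\theta(n)-d_1}\,\theta(n)^{-2d_2}$. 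For the second factor, differentiating $\nu(z):=z\,Q'(z)/Q(z)$ and evaluating at $r$ gives the variance parameter $\sigma^2:=r\,\nu'(r)=\dfrac{n\theta(n)+2d_2}{\theta(n)^2}>0$ (a genuine saddle), so the Gaussian integral is $\sim\sqrt{2\pi/\sigma^2}=\theta(n)\sqrt{2\pi/(n\theta(n)+2d_2)}$. Multiplying, $[z^n]Q(z)\sim\dfrac{e^{\,d_0+d_1\log\theta(n)+d_2\log^2\theta(n)-d_1}\,\theta(n)^{1-2d_2}}{\sqrt{2\pi}\,\sqrt{n\theta(n)+2d_2}}$, and using the saddle identity $n\theta(n)+2d_2\sim-2d_2\log\theta(n)$ together with $\sqrt{2\pi}\sqrt{2}=2\sqrt{\pi}$. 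The displayed form of the theorem then comes out after the substitution $\operatorname{W}(x)=\log x+o(\log x)$ in every factor except the quadratic exponent: $\theta(n)$ is replaced by $\tfrac{2d_2}{n}\log x=\tfrac{2d_2}{n}\log\!\big(\tfrac{ne^{-d_1/(2d_2)}}{2d_2}\big)$, and $-2d_2\log\theta(n)$ by the corresponding displayed outer-logarithm expression.

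The hard part will be the bookkeeping of which occurrences of $\operatorname{W}(x)$ (equivalently of $\theta(n)$) may be replaced by $\log x$ and which must not — exactly the phenomenon behind Theorem~\ref{thm:010P}. Since $\operatorname{W}(x)-\log x=-\log\operatorname{W}(x)=O(\log\log x)$, the replacement is harmless inside any single multiplicative prefactor and changes the linear exponent $d_1\log\theta(n)$ only by $o(1)$; but it cannot be performed inside $d_2\log^2\theta(n)$, where the $O(\log\log x)$ error is multiplied by the unbounded $\log\theta(n)$ and does not vanish — which is precisely why the theorem retains $\theta(n)$ inside the exponential while using the explicit logarithmic surrogate elsewhere. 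The remaining steps — the local Gaussian approximation near $\theta=0$, the tail bound for $|\theta|$ bounded away from $0$, and verifying $\sigma^2\to\infty$ so the estimate is non-degenerate — are routine and follow the template already used for the $(0,1,0)$ and $(0,0,1)$ cases.
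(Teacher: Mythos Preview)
Your proposal is correct and follows essentially the same route as the paper's own proof: locate the saddle by solving $\log u=-\tfrac{d_1}{2d_2}-\tfrac{n}{2d_2}u$ via Remark~\ref{rem:Lambert}, evaluate $e^{\log Q(r)-n\log r}$ using $d(\log\theta(n))+n\theta(n)$, compute the Gaussian integral, and simplify with $\operatorname{W}(x)\sim\log x$. Your write-up is in fact more careful than the paper's (which just says ``technically, this theorem is identical to Theorem~\ref{thm:010P}'')---you explicitly compute $\sigma^2=r\nu'(r)=(n\theta(n)+2d_2)/\theta(n)^2$ and articulate precisely which occurrences of $\operatorname{W}$ may be replaced by $\log$ without spoiling the asymptotic equivalence, a point the paper leaves implicit.
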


\begin{proof}
Technically, this theorem is identical to Theorem \ref{thm:010P}, so let us develop it implicitly.
The saddle point equation $-2d_2\log\log(1/z)/\log(1/z) -  d_1/\log(1/z)  = n$ is
equivalent to $\log u = -\frac{d_1}{2d_2}-\frac{n}{2d_2} u$ with $u=\log(1/z)$,
thus we get
the asymptotic saddle point
$r = \exp\big(-\frac{2d_2}{n}\operatorname{W}\big( e^{-d_1/(2d_2)}\cdot \frac{n}{2d_2} \big) \big)
$
and with it, letting $s = -\log r$, we get
\begin{align}
	\frac{1}{2\pi}e^{\log Q(r) - n\log r} \sim
	\frac{e^{d_0+d_1\log \theta(n)+d_2\log^2 \theta(n)}}{2\pi} \cdot e^{n\theta(n)} 
\end{align}
and
\begin{align}
	\int_{-\pi}^\pi \exp\left( \frac{f''(r)}{2!}(re^{i\theta} - r)^2 + \ldots \right)d\theta
	& \sim \sqrt{\frac{\pi }{d_2\log \theta(n)}} \cdot \theta(n)
\end{align}
Putting together the last two estimates above and simplifying yields the desired result.
\end{proof}

\section{First-Order Asymptotic Approximation of Logarithmic Growth.}
\label{sec:general}

Despite the fact that the saddle point equation above can be solved explicitly only in a few cases, to determine the asymptotic growth of the \emph{logarithm} of the coefficients, the following slower asymptotic saddle points suffice.
(Again we use the notation $F(z)$ to represent either $P(z)$ or $Q(z)$.)
\begin{proposition}
\label{prop:saddle}  
	The saddle point equation $rf'(r)=r\thinspace F^{\prime}(r) / F(r) - n = 0$ yields
	the following \emph{weak} asymptotic saddle points 
$r = \exp(-\exp(\alpha(n)))$ along the positive real axis:
		\begin{equation}
			\alpha(n) = \begin{cases}
  -\log\big(  \frac{n}{2A}  \big)^{1/3} & \hbox{if $i=1$}\\
 - \frac{i-1}{3} \operatorname{W}\big( - \frac{3}{i-1} \big(\frac{n}{2A }\big)^{1/(i-1)}    \big) & \hbox{if $i > 1$}\\
 -\log \big(  \frac{n}{B }  \big)^{1/2} & \hbox{if $i=0$ and $k=1$}\\
 - \frac{k-1}{2} \operatorname{W}\big( - \frac{2}{k-1}
 \big(\frac{n}{B}\big)^{1/(k-1)}    \big)  & \hbox{if $i=0$ and $k > 1$}\\
 -j\!\ \operatorname{W}\big( - \frac{1}{j} \big(\frac{n}{-(j+1)C }\big)^{1/j}    \big)  &
\hbox{if $i=0$ and $k=0$ and $j\geq 1$ and $F(z)=P(z)$}\\
\log( {-D/n} )  & \hbox{if $i=0$ and $k=0$ and $j=1$ and $F(z)=Q(z)$}\\
 -(j-1) \operatorname{W}\big( - \frac{1}{j-1}
\big(\frac{n}{-jD}\big)^{1/(j-1)}  \big)  &
\hbox{if $i=0$ and $k=0$ and $j> 1$ and $F(z)=Q(z)$}\\
\end{cases}
		\end{equation}
\end{proposition}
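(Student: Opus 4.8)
\medskip

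The plan is to solve the saddle point equation~\eqref{eq:spe} only to leading order, starting from the closed form for $r\thinspace F'(r)/F(r)$ in Proposition~\ref{prop:Pderivative1}. The convenient substitution is $t := \log(1/r)$ and $L := \log t$; since the ansatz is $r = \exp(-\exp(\alpha(n)))$, one has $t = e^{\alpha(n)}$ and $L = \alpha(n)$ exactly, so ``finding the weak saddle'' means solving for $L$. In this notation Proposition~\ref{prop:Pderivative1} reads
\begin{equation}
\label{eq:saddleplan}
	\frac{2a(L)-a'(L)}{t^{3}} + \frac{b(L)-b'(L)}{t^{2}} - \frac{g'(L)}{t} - h'(t) = n,
\end{equation}
where $g=c$ when $F=P$ and $g=d$ when $F=Q$ by Proposition~\ref{prop:asymp}. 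By Proposition~\ref{prop:poly:h} one has $h(t)=O(t)$ as $t\to 0$, so $h'(t)=O(1)$ and may be discarded; that is the first step.

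Next I would identify the dominant term of~\eqref{eq:saddleplan} in each of the three families of triples. If $i\geq 1$, the $t^{-3}$ term dominates, and since $a$ has degree $i-1$ with leading coefficient $A=a_{i-1}$ (Proposition~\ref{prop:poly:a}), \eqref{eq:saddleplan} reduces to $2A\thinspace\alpha^{i-1}e^{-3\alpha}=n$. If $i=0$ and $k\geq 1$, the $t^{-2}$ term dominates, and since $b$ has degree $k-1$ with leading coefficient $B=b_{k-1}$ (Proposition~\ref{prop:poly:b}), it reduces to $B\thinspace\alpha^{k-1}e^{-2\alpha}=n$. If $i=k=0$, the $t^{-1}$ term dominates: for $F=P$, $c$ has degree $j+1$ with leading coefficient $C=c_{j+1}$ (Proposition~\ref{prop:poly:c}), so $c'(L)\sim (j+1)C\thinspace L^{j}$ and the equation becomes $-(j+1)C\thinspace\alpha^{j}e^{-\alpha}=n$; for $F=Q$ with $j\geq 2$, $d$ has degree $j$ with leading coefficient $D=d_{j}$ (Proposition~\ref{prop:poly:d}), giving $-jD\thinspace\alpha^{j-1}e^{-\alpha}=n$, while for $F=Q$ with $j=1$ the polynomial $d$ is linear and $d'(L)=D$ is constant, giving simply $-D/t=n$.

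Every reduced equation has the shape $\alpha^{m}e^{-\lambda\alpha}=n/\kappa$ for an exponent $m\in\{i-1,\,k-1,\,j,\,j-1\}$, a rate $\lambda\in\{3,2,1\}$, and a constant $\kappa$ built from $A,B,C,D$ (the case $F=Q$, $j=1$ being the degenerate instance $m=0$). When $m=0$ one reads off $\alpha=-\tfrac{1}{\lambda}\log(n/\kappa)$ directly, which is exactly the cases $i=1$, the case $i=0,k=1$, and the case $i=k=0,j=1,F=Q$ of the statement. When $m\geq 1$, I would take an $m$-th root to get $\alpha\,e^{-\lambda\alpha/m}=(n/\kappa)^{1/m}$, then multiply through by $-\lambda/m$ so that, with $\beta:=-\lambda\alpha/m$, the equation becomes the canonical Lambert form $\beta\,e^{\beta}=-\tfrac{\lambda}{m}(n/\kappa)^{1/m}$; Remark~\ref{rem:Lambert} then yields $\beta=\operatorname{W}\!\big(-\tfrac{\lambda}{m}(n/\kappa)^{1/m}\big)$, hence $\alpha=-\tfrac{m}{\lambda}\operatorname{W}\!\big(-\tfrac{\lambda}{m}(n/\kappa)^{1/m}\big)$. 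Substituting the explicit $(m,\lambda,\kappa)$ of each case reproduces the remaining lines of the displayed formula for $\alpha(n)$.

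The main obstacle is the bookkeeping that legitimizes the passage from~\eqref{eq:saddleplan} to its reduced form, rather than the algebra. Using the explicit signs of $A,B,C,D$ recorded in Propositions~\ref{prop:poly:a}, \ref{prop:poly:b}, \ref{prop:poly:c} and~\ref{prop:poly:d}, one must check that each candidate $\alpha(n)\to-\infty$ (so that $r\to 1^{-}$ and the expansion~\eqref{eq:FExpansion} is in force), that the appropriate real branches of the $m$-th root and of $\operatorname{W}$ are the ones being selected, and---crucially---that after substituting $\alpha(n)$ the discarded pieces (the lower-degree parts of $a,b,c,d$, the subdominant powers of $t$, and $h'(t)$) contribute only $o(n)$, so that these are genuine \emph{weak} saddle points of the kind needed for the logarithmic asymptotics of~\S\ref{sec:general}. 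Since all leading coefficients are already computed in the cited propositions, once this self-consistency check is in place the proposition follows by inspecting the seven cases.
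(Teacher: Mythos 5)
Your proposal is correct and follows essentially the same route as the paper: isolate the dominant power of $t=\log(1/r)$ and the leading coefficient of the corresponding residue polynomial in each of the regimes $i\geq 1$, $i=0,k\geq 1$, and $i=k=0$, then solve the reduced equation via Remark~\ref{rem:Lambert}. The only difference is cosmetic — you uniformize the seven cases as $\alpha^{m}e^{-\lambda\alpha}=n/\kappa$ and pass directly to the canonical $\beta e^{\beta}$ form, whereas the paper takes logarithms to reach $\log x=\log b+cx$ case by case; both are the same Lambert-$\operatorname{W}$ manipulation.
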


\begin{proof}
If $i \geq 1$, then the saddle point location is the same in the
$P(z)$ and $Q(z)$ cases.
\begin{align}
\label{eq:Papprox}
	P(z) \sim \exp\left(
	\frac{A \log^{i-1} (-\log z)}{(-\log z)^2} \big(1+o(1)\big)
	\right).
\end{align}
The saddle is located at radius ``$r$'' with $r\frac{P'(z)}{P(z)}=n$, so using the approximation in \eqref{eq:Papprox} we get
\begin{align}\label{saddlecasea}
	\frac{P'(z)}{P(z)} \sim \frac{1}{z} \left( \frac{2A \log^{i-1}(-\log z)}{(-\log z)^3} - \frac{(i-1)A \log^{i-2}(-\log z)}{(-\log z)^3} \right).
\end{align}
If $i=1$, this simplifies to $2A/(-\log r)^3 = n$,
so the location of the saddle point~$r$ is:
\begin{align}
	r = \exp \bigg( - \bigg(  \frac{2A}{n}  \bigg)^{1/3}\bigg).
\end{align}
If $i>1$, because we only want a first-order solution
of~\eqref{saddlecasea}, we want to solve
\begin{align}
	\frac{2A \log^{i-1}(-\log r)}{(-\log r)^3} = n.
\end{align}
Taking a power of $1/(i-1)$ throughout yields
\begin{align}
	(2A )^{1/(i-1)}\log(-\log r) = n^{1/(i-1)}(-\log r)^{3/(i-1)}.
\end{align}
Taking logarithms and solving for $\log\log(-\log r)$ yields:
\begin{align}
	\log \log(-\log r) = \log\bigg(\frac{n}{2A }\bigg)^{1/(i-1)}
+ \frac{3}{i-1} \log(-\log r).
\end{align}
We know that $\log{x} = \log{b} + cx$ has solution
$x = -(1/c)\operatorname{W}(-bc)$.
Using $x = \log(-\log{r})$ 
and $b = \big(\frac{n}{2A }\big)^{1/(i-1)}$ and $c = 3/(i-1)$, it
follows that
\begin{align}
	\log(-\log{r}) = - \frac{i-1}{3} \operatorname{W}\left( - \frac{3}{i-1} \left(\frac{n}{2A }\right)^{1/(i-1)}  \right).
\end{align}
and it follows immediately that
\begin{align}
	r = \exp \bigg(
	-\exp \bigg( 
		- \frac{i-1}{3} \operatorname{W}\bigg( - \frac{3}{i-1} \bigg(\frac{n}{2A }\bigg)^{1/(i-1)}    \bigg) \bigg) \bigg).
\end{align}

If $i=0$ and $k \geq 1$, the saddle point location is the same in the
$P(z)$ and $Q(z)$ cases.
\begin{align}
\label{eq:Papprox2}
	P(z) \sim \exp\left(
	\frac{B \log^{k-1} (-\log z)}{-\log z} \big(1+o(1)\big)
	\right).
\end{align}
The saddle is located at radius ``$r$'' with $r\frac{P'(z)}{P(z)}=n$, so using the approximation in \eqref{eq:Papprox2} we get
\begin{align}\label{saddlecaseb}
	\frac{P'(z)}{P(z)} \sim \frac{1}{z} \left( \frac{B \log^{k-1}(-\log z)}{(-\log z)^2} - \frac{(k-1)B \log^{k-2}(-\log z)}{(-\log z)^2} \right).
\end{align}
If $k=1$, this simplifies to $B /(-\log r)^2 = n$,
so the location of the saddle point~$r$ is:
\begin{align}
	r = \exp \bigg( - \bigg(  \frac{B }{n}  \bigg)^{1/2}\bigg).
\end{align}
If $k>1$, because we only want a first-order solution
of~\eqref{saddlecaseb}, we want to solve
\begin{align}
	\frac{B \log^{k-1}(-\log r)}{(-\log r)^2} = n.
\end{align}
Taking a power of $1/(k-1)$ throughout yields
\begin{align}
	(B )^{1/(k-1)}\log(-\log r) = n^{1/(k-1)}(-\log r)^{2/(k-1)}.
\end{align}
Taking logarithms and solving for $\log\log(-\log r)$ yields:
\begin{align}
	\log\log(-\log r) =
        \log\bigg(\frac{n}{B}\bigg)^{1/(k-1)} + \frac{2}{k-1}\log(-\log r).
\end{align}
We know that $\log{x} = \log{b} + cx$ has solution
$x = -(1/c)\operatorname{W}(-bc)$.
Using $x = \log(-\log{r})$ 
and $b = \big(\frac{n}{B}\big)^{1/(k-1)}$ and $c = 2/(k-1)$, it
follows that
\begin{align}
	\log(-\log{r}) = - \left(\frac{k-1}{2}\right) \operatorname{W}\left( - \frac{2}{k-1} \left(\frac{n}{B}\right)^{1/(k-1)}  \right).
\end{align}
and it follows immediately that
\begin{align}
	r = \exp \bigg(
	-\exp \bigg( 
		- \bigg(\frac{k-1}{2}\bigg) \operatorname{W}\bigg( - \frac{2}{k-1} \bigg(\frac{n}{B}\bigg)^{1/(k-1)}    \bigg) \bigg) \bigg).
\end{align}

If $i=0$ and $j \geq 1$ and $k=0$, then
\begin{align}
\label{eq:Papprox3}
	P(z) &\sim \exp\left(
	C \log^{j+1} (-\log z) \big(1+o(1)\big)
	\right),\\
	Q(z) &\sim \exp\left(
	D \log^{j} (-\log z) \big(1+o(1)\big)
	\right).
\end{align}
The saddle is located at radius ``$r$'' with $r\frac{P'(z)}{P(z)}=n$, so using the approximation in \eqref{eq:Papprox3} we get
\begin{align}\label{saddlecasec}
	\frac{P'(z)}{P(z)} &\sim \frac{1}{z} \left( - \frac{(j+1)C \log^{j}(-\log z)}{-\log z} \right),\\
	\frac{Q'(z)}{Q(z)} &\sim \frac{1}{z} \left( - \frac{jD \log^{j-1}(-\log z)}{-\log z} \right).\label{saddlecased}
\end{align}
In the case $j=1$, when analyzing the saddle point for $Q(z)$,
we have simply
$\frac{Q'(z)}{Q(z)} \sim \frac{1}{z} \frac{D}{\log z}$, so 
the location of the saddle point $r$ satisfies
$\frac{D}{\log r} = n$, and thus $r = \exp(D/n)$.

If $j\geq 1$ (or if $j>1$ in the $Q$ case), 
because we only want a first-order solution
of~\eqref{saddlecasec} and~\eqref{saddlecased}, we want to solve
\begin{align}
	\frac{-(j+1)C \log^{j}(-\log r)}{-\log r} = n
\qquad\hbox{and}\qquad
	\frac{-jD \log^{j-1}(-\log r)}{-\log r} = n.
\end{align}
Taking a power of $1/j$ or $1/(j-1)$ (respectively) throughout yields
$$
	(-(j+1)C )^{1/j}\log(-\log r) = n^{1/j}(-\log r)^{1/j}
$$
and
$$
	(-jD )^{1/(j-1)}\log(-\log r) = n^{1/(j-1)}(-\log r)^{1/(j-1)}.
$$
Taking logarithms and solving for $\log\log(-\log r)$ yields:
$$
	\log\log(-\log r) 
= \log\bigg(\frac{n}{-(j+1)C }\bigg)^{1/j} + \frac{1}{j}\log(-\log r)
$$
and
$$
	\log\log(-\log r)
= \log\bigg(\frac{n}{-jD}\bigg)^{1/(j-1)} + \frac{1}{j-1}\log(-\log r).
$$
We know that $\log{x} = \log{b} + cx$ has solution
$x = -(1/c)\operatorname{W}(-bc)$.
Using $x = \log(-\log{r})$ 
and $b = \big(\frac{n}{-(j+1)C }\big)^{1/j}$ 
or, respectively, $b = \big(\frac{n}{-jD}\big)^{1/(j-1)}$,
and using 
$c = 1/j$ or, respectively, $c=1/(j-1)$, it
follows that the locations of the saddle point~$r$ satisfy, respectively:
\begin{align}
	\log(-\log{r}) = -j\!\ \operatorname{W}\left( - \frac{1}{j} \left(\frac{n}{-(j+1)C }\right)^{1/j}  \right).
\end{align}
and
\begin{align}
	\log(-\log{r}) = -(j-1)\!\ \operatorname{W}\left( 
- \frac{1}{j-1} \left(\frac{n}{-jD}\right)^{1/(j-1)}  \right).
\end{align}
and it follows immediately that
\begin{align}
	r = \exp \bigg(
	-\exp \bigg( 
		- j\!\ \operatorname{W}\bigg( - \frac{1}{j} \bigg(\frac{n}{-(j+1)C }\bigg)^{1/j}    \bigg) \bigg) \bigg)
\end{align}
and
\begin{align}
	r = \exp \bigg(
	-\exp \bigg( 
		- (j-1)\!\ \operatorname{W}\bigg( - \frac{1}{j-1}
                \bigg(\frac{n}{-jD}\bigg)^{1/(j-1)}    \bigg) \bigg) \bigg)
\end{align}

\end{proof}

\subsection{Central approximation}

We are using $f(z) = \log F(z) - n\log{z}$, as explained immediately
below equation~\eqref{eq:FExpansion}.  We have
\begin{align}\label{unifiedframework}
[z^{n}]F(z) &\sim \frac{1}{2\pi}F(r)r^{-n}\int\limits_{\substack{-\pi \\ -\infty}}^{\substack{\infty \\ \pi}}
			\exp\left(
				\frac{1}{2}f''(r)(re^{i\theta}-r)^2
			\right)d\theta\\
\label{saddlepointequation}	&\sim \frac{1}{2\pi}\exp\left(
	\frac{a\big(\alpha(n)\big)}{\exp(2\alpha(n))} + \frac{b\big(\alpha(n)\big)}{\exp(\alpha(n))} + g\big(\alpha(n)\big) + h\big(\exp(\alpha(n))\big)
	\right)e^{n\exp(\alpha(n))}\sqrt\frac{-2\pi}{f''(r)r^{2} }
\end{align}

It follows from Proposition~\ref{prop:saddle} that
\begin{equation}\label{nexpalphan}
n\exp(\alpha(n)) \sim \begin{cases}
(2A)^{1/3}n^{2/3}
& \hbox{if $i=1$}\\
n\exp\big(
- \frac{i-1}{3} \operatorname{W}\big( - \frac{3}{i-1} \big(\frac{n}{2A }\big)^{1/(i-1)}    \big)
\big)
& \hbox{if $i>1$}\\
 (Bn)^{1/2} & \hbox{if $i=0$, $k=1$}\\
 n\exp\big(-\frac{k-1}{2} \operatorname{W}\big( - \frac{2}{k-1}
 \big(\frac{n}{B}\big)^{1/(k-1)}    \big)\big)  & \hbox{if $i=0$, $k > 1$}\\
 n\exp\big(-j\!\ \operatorname{W}\big( - \frac{1}{j} \big(\frac{n}{-(j+1)C }\big)^{1/j}    \big)\big)  &
\hbox{if $i=0$, $k=0$, $j\geq 1$, $F(z)=P(z)$}\\
-D  & \hbox{if $i=0$, $k=0$, $j=1$, $F(z)=Q(z)$}\\
n\exp\big(-(j-1) \operatorname{W}\big( - \frac{1}{j-1}
\big(\frac{n}{-jD }\big)^{1/(j-1)}  \big)\big)  &
\hbox{if $i=0$, $k=0$, $j> 1$, $F(z)=Q(z)$}\\
\end{cases}
\end{equation}

It also follows from Proposition~\ref{prop:saddle} that
$$\log\Big(\sqrt{\frac{1}{r^{2}}}\!\ \Big) = -\log(r) = \exp(\alpha(n))$$
so
\begin{equation}\label{logsqrt1overrsquared}
\log\Big(\sqrt{\frac{1}{r^{2}}}\!\ \Big)
 \sim \begin{cases}
(2A)^{1/3}n^{-1/3}
& \hbox{if $i=1$}\\
\exp\big(
- \frac{i-1}{3} \operatorname{W}\big( - \frac{3}{i-1} \big(\frac{n}{2A }\big)^{1/(i-1)}    \big)
\big)
& \hbox{if $i>1$}\\
 B^{1/2}n^{-1/2} & \hbox{if $i=0$, $k=1$}\\
 \exp\big(-\frac{k-1}{2} \operatorname{W}\big( - \frac{2}{k-1}
 \big(\frac{n}{B}\big)^{1/(k-1)}    \big)\big)  & \hbox{if $i=0$, $k > 1$}\\
 \exp\big(-j\!\ \operatorname{W}\big( - \frac{1}{j} \big(\frac{n}{-(j+1)C }\big)^{1/j}    \big)\big)  &
\hbox{if $i=0$, $k=0$, $j\geq 1$, $F(z)=P(z)$}\\
-D/n  & \hbox{if $i=0$, $k=0$, $j=1$, $F(z)=Q(z)$}\\
\exp\big(-(j-1) \operatorname{W}\big( - \frac{1}{j-1}
\big(\frac{n}{-jD }\big)^{1/(j-1)}  \big)\big)  &
\hbox{if $i=0$, $k=0$, $j> 1$, $F(z)=Q(z)$}\\
\end{cases}
\end{equation}

We also use Proposition~\ref{prop:saddle} to compute 
\begin{align}\label{aoverexp2alphanetc}
\frac{a\big(\alpha(n)\big)}{\exp(2\alpha(n))}
&\sim \begin{cases}
(\frac{1}{2})(2A)^{1/3}n^{2/3}
& \hbox{if $i=1$}\\
A\big( - \frac{i-1}{3} \operatorname{W}\big( - \frac{3}{i-1} \big(\frac{n}{2A }\big)^{1/(i-1)}    \big)\big)^{i-1}\exp\big(
2\big(\frac{i-1}{3}\big) \operatorname{W}\big( - \frac{3}{i-1} \big(\frac{n}{2A }\big)^{1/(i-1)}    \big)
\big)
& \hbox{if $i>1$}
\end{cases}\\
\frac{b\big(\alpha(n)\big)}{\exp(\alpha(n))}
&\sim \begin{cases}
(Bn)^{1/2} & \hbox{if $i=0$, $k=1$}\\
B\big(  - \frac{k-1}{2} \operatorname{W}\big( - \frac{2}{k-1}
 \big(\frac{n}{B}\big)^{1/(k-1)}    \big) \big)^{k-1}\exp\big(\frac{k-1}{2} \operatorname{W}\big( - \frac{2}{k-1}
 \big(\frac{n}{B}\big)^{1/(k-1)}    \big)\big)  & \hbox{if $i=0$, $k > 1$}
\end{cases}\\
g\big(\alpha(n)\big)
&\sim \begin{cases}
C\big(  -j\!\ \operatorname{W}\big( - \frac{1}{j} \big(\frac{n}{-(j+1)C
}\big)^{1/j}    \big) \big)^{j+1} & \hbox{if $i=0, k=0, j\geq 1$, $F(z) = P(z)$}\\
D\log( {-D/n} ) & \hbox{if $i=0, k=0, j= 1$, $F(z) = P(z)$}\\
D\big(  -(j-1) \operatorname{W}\big( - \frac{1}{j-1}
\big(\frac{n}{-jD}\big)^{1/(j-1)}  \big) \big)^{j} & \hbox{if $i=0, k=0, j> 1$, $F(z) = P(z)$}
\end{cases}
\end{align}

Finally, we compute $f^{\prime\prime}$,
using $f(z)=\log\big(F(z)\big)-n\log(z)$.
\begin{proposition}
From~\eqref{eq:FExpansion}, for $i > 1$, we have
$f^{\prime\prime}(r) = 6A\alpha(n)^{i - 1}e^{-4\alpha(n)}
+ \Theta(\alpha(n)^{i - 2}e^{-4\alpha(n)}  )$,
or in the case $i=1$, we have the same first-order approximation, but
a slightly different error term, namely 
$f^{\prime\prime}(r) = 6Ae^{-4\alpha(n)}
+ \Theta(e^{-3\alpha(n)}  )$.

For $i=0$ and $k > 1$, we have
$f^{\prime\prime}(r) = -2B\alpha(n)^{k - 1}e^{-3\alpha(n)}
+ \Theta(\alpha(n)^{k-2}e^{-3\alpha(n)}  )$,
or, very similarly, in the $k=1$ case, we have 
$f^{\prime\prime}(r) = -2Be^{-3\alpha(n)} + \Theta(e^{-2\alpha(n)}  )$.

For $i=0$ and $k=0$ and $j \geq 1$, in the $P(z)$ case, we have
$f^{\prime\prime}(r) = -(j+1)C\alpha(n)^{j}e^{-2\alpha(n)}
+ \Theta(\alpha(n)^{j-1}e^{-2\alpha(n)}  )$.

For $i=0$ and $k=0$ and $j > 1$, in the $Q(z)$ case, we have
$f^{\prime\prime}(r) = -jD\alpha(n)^{j-1}e^{-2\alpha(n)}
+ \Theta(\alpha(n)^{j-2}e^{-2\alpha(n)}  )$,
or, very similarly, in the $j=1$ case, we have
$f^{\prime\prime}(r) = -D\!\ e^{-2\alpha(n)}
+ \Theta(e^{-\alpha(n)}  )$.
\end{proposition}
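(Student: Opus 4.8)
The plan is to obtain $f''$ at the weak saddle $r=\exp(-\exp(\alpha(n)))$ of Proposition~\ref{prop:saddle} by differentiating the Mellin-derived expansion \eqref{eq:FExpansion} twice and reading off the dominant term. Write $\vartheta=z\frac{d}{dz}$, so that $\vartheta^{2}G=z^{2}G''+\vartheta G$ for every smooth $G$. Applying this with $G=\log F$, using the weak saddle-point equation $\vartheta(\log F)(r)=rF'(r)/F(r)=n\bigl(1+o(1)\bigr)$, and recalling $f(z)=\log F(z)-n\log z$, one gets
\[
r^{2}f''(r)=z^{2}(\log F)''(z)\big|_{z=r}+n=\vartheta^{2}(\log F)(r)-\vartheta(\log F)(r)+n=\vartheta^{2}(\log F)(r)\bigl(1+o(1)\bigr),
\]
the $o(1)$ being harmless since $\vartheta^{2}(\log F)(r)$ turns out to be of strictly larger order than $n$. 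Because $\alpha(n)\to-\infty$ in every admissible case we also have $r\to1$ and $r^{-2}=1+O(e^{\alpha(n)})$, so it suffices to expand $\vartheta^{2}(\log F)(r)$ and multiply by $1+O(e^{\alpha(n)})$.

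The next step is to differentiate the expansion twice. Under the substitution $t=-\log z$ one has $\vartheta=-\frac{d}{dt}$ and, from \eqref{eq:FExpansion}, $L_F(t)=\log F(e^{-t})\sim\frac{a(\log t)}{t^{2}}+\frac{b(\log t)}{t}+g(\log t)+h(t)$, so $\vartheta^{2}(\log F)(r)=L_F''(e^{\alpha(n)})$. The point that makes termwise differentiation legitimate is that, with $c$ a vertical line left of all the poles used, the remainder equals $\frac{1}{2\pi i}\int_{(c)}L^{*}(s)\,t^{-s}\,ds$, and since $L^{*}(s)$ — the product $\zeta(s-1)^{i}\zeta(s+1)^{j+1}\zeta(s)^{k}\Gamma(s)$, times a factor $1-2^{-s}$ in the $Q$ case, see \eqref{eq:MellinMultiSet}--\eqref{eq:MellinSet} — has super-polynomial vertical decay from its $\Gamma$ factor, so does $s(s+1)L^{*}(s)$; hence $\frac{d^{2}}{dt^{2}}$ of the remainder is $O(t^{-c-2})$ with $-c$ arbitrarily large. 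Differentiating the explicit part once reproduces Proposition~\ref{prop:Pderivative1}; a second differentiation, using $\frac{d^{2}}{dt^{2}}t^{-2}=6t^{-4}$, $\frac{d^{2}}{dt^{2}}t^{-1}=2t^{-3}$, $\frac{d^{2}}{dt^{2}}g(\log t)=t^{-2}\bigl(g''(\log t)-g'(\log t)\bigr)$ and the chain rule, yields
\[
L_F''(t)\sim\frac{6a(\log t)-5a'(\log t)+a''(\log t)}{t^{4}}+\frac{2b(\log t)-3b'(\log t)+b''(\log t)}{t^{3}}+\frac{g''(\log t)-g'(\log t)}{t^{2}}+h''(t),
\]
where $h''(t)=O(1)$ because the $h$-term collects residues at $s=-1,-2,-4,\dots$ and so contributes $O(t)$ to $L_F$, hence $O(1)$ to $L_F''$ in the sense of the remainder bound.

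Finally I would evaluate at $t_{0}=-\log r=e^{\alpha(n)}$, so that $\log t_{0}=\alpha(n)\to-\infty$ and $t_{0}^{-m}=e^{-m\alpha(n)}$, and extract the leading contribution. Since $\alpha(n)^{p}e^{\alpha(n)}\to0$ for every fixed $p$, the group carrying the most negative power of $t$ that is actually present dominates, and within it the top-degree polynomial piece wins. When $i\ge1$ the $t^{-4}$-group leads ($a$ has degree $i-1$, leading coefficient $A$), giving $6A\,\alpha(n)^{i-1}e^{-4\alpha(n)}$, with next term $-5A(i-1)\alpha(n)^{i-2}e^{-4\alpha(n)}$ for $i>1$ and, for $i=1$ (where the $a$-piece is exact), the residual $\Theta(e^{-3\alpha(n)})$ coming from the $t^{-3}$-group and the factor $r^{-2}$. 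When $i=0$ and $k\ge1$ the $t^{-3}$-group leads ($b$ has degree $k-1$), giving $2B\,\alpha(n)^{k-1}e^{-3\alpha(n)}$ — a constant multiple of $\alpha(n)^{k-1}e^{-3\alpha(n)}$ traceable to $\frac{d^{2}}{dt^{2}}t^{-1}=2t^{-3}$ — with error $\Theta(\alpha(n)^{k-2}e^{-3\alpha(n)})$, and plain $\Theta(e^{-2\alpha(n)})$ when $k=1$. When $i=k=0$ and $j\ge1$ the $t^{-2}$-group leads: for $F=P$, where $g=c$ has degree $j+1$, the term is $-(j+1)C\,\alpha(n)^{j}e^{-2\alpha(n)}$ with error $\Theta(\alpha(n)^{j-1}e^{-2\alpha(n)})$; for $F=Q$, where $g=d$ has degree $j$, it is $-jD\,\alpha(n)^{j-1}e^{-2\alpha(n)}$ — reducing to $-D\,e^{-2\alpha(n)}$ when $j=1$ — with error $\Theta(\alpha(n)^{j-2}e^{-2\alpha(n)})$, resp.\ $\Theta(e^{-\alpha(n)})$. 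Multiplying through by $r^{-2}=1+O(e^{\alpha(n)})$ only introduces terms of the shape $\alpha(n)^{p}e^{-(m-1)\alpha(n)}$, each of which is $o$ of the corresponding retained term, so the answer is unchanged; this is the assertion of the proposition.

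The one genuinely non-routine step is the termwise \emph{second} differentiability above: \eqref{eq:FExpansion} is a priori only an asymptotic equivalence for $\log F(z)$, whereas what is needed is an asymptotic expansion of its second derivative with an error below every retained term. I expect this to be the main obstacle to write carefully, and I would handle it exactly as the earlier residue propositions are justified — multiplication of $L^{*}(s)$ by any polynomial in $s$ preserves the super-polynomial vertical decay supplied by $\Gamma(s)$, so the contour integral representing the remainder may be differentiated under the integral sign and then estimated by pushing the line of integration arbitrarily far to the left. Everything else is the chain rule and the single size comparison $\alpha(n)^{p}e^{\alpha(n)}\to0$.
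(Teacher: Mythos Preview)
Your approach is correct and is precisely the route the paper implicitly intends: the proposition is stated without proof, but since Proposition~\ref{prop:Pderivative1} already obtains $\vartheta(\log F)=-L_F'(t)$ by differentiating the Mellin expansion once, the second derivative is obtained the same way, and your formula
\[
L_F''(t)\sim\frac{6a-5a'+a''}{t^{4}}+\frac{2b-3b'+b''}{t^{3}}+\frac{g''-g'}{t^{2}}+h''(t)
\]
is exactly what a second application of $d/dt$ gives. Your justification of termwise differentiation via the vertical decay of $\Gamma(s)$ in the inverse Mellin integral is the right technical point, and the identity $r^{2}f''(r)=\vartheta^{2}(\log F)(r)-\vartheta(\log F)(r)+n$ together with $r^{-2}=1+O(e^{\alpha(n)})$ handles the passage from $L_F''$ to $f''$ cleanly.

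One remark on signs: in the case $i=0$, $k\ge1$ your computation yields $+2B\,\alpha(n)^{k-1}e^{-3\alpha(n)}$, not the $-2B$ printed in the proposition, and your computation is the correct one (for $(i,j,k)=(0,0,1)$ a direct check gives $f''(r)\sim \frac{\pi^{2}}{3}e^{-3\alpha(n)}=+2Be^{-3\alpha(n)}>0$, as it must be at a saddle on the positive axis). You seem to have noticed this, since you hedge with ``a constant multiple''; it is worth saying explicitly that the sign in the statement appears to be a typo. The downstream use in~\eqref{logsqrtoneoverfdoubleprime} only needs $\log|f''(r)|^{-1/2}\sim\tfrac{3}{2}\alpha(n)$, so nothing later is affected.
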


It follows that
\begin{equation}\label{logsqrtoneoverfdoubleprime}
\log\Big(\sqrt{\frac{1}{f^{\prime\prime}(r)}}\!\ \Big)
\sim \begin{cases}
2\alpha(n)
& \hbox{if $i\geq 1$}\\
\frac{3}{2}\alpha(n)
& \hbox{if $i=0$, $k\geq 1$}\\
\alpha(n)  &
\hbox{if $i=0$, $k=0$, $j\geq 1$, $F(z)=P(z)$}\\
\alpha(n) & \hbox{if $i=0$, $k=0$, $j\geq 1$, $F(z)=Q(z)$}
\end{cases}
\end{equation}

\begin{theorem}\label{theorem63}
Let $F(z)$ denote $P(z)$ or $Q(z)$.
We have
\begin{equation}
\log{[z^{n}]F(z)} \sim \begin{cases}
(3/2)(-3)^{-(i-1)/3}(2A)^{1/3}(\ln{n})^{(i-1)/3}n^{2/3}
& \hbox{if $i\geq 1$}\\
(2)(-2)^{-(k-1)/2}B^{1/2}(\ln{n})^{(k-1)/2}n^{1/2} & \hbox{if $i=0$, $k\geq 1$}\\
(-1)^{j+1}(C)(\ln{n})^{j+1} &
\hbox{if $i=0$, $k=0$, $j\geq 1$, $F(z)=P(z)$}\\
(-1)^{j}(D)(\ln{n})^{j} & \hbox{if $i=0$, $k=0$, $j\geq 1$, $F(z)=Q(z)$}
\end{cases}
\end{equation}
where, as defined in~\eqref{definitionofAconstant}, we have
\begin{equation}
A = \begin{cases}\frac{(-1)^{i-1}\zeta(3)^{j+1}\pi^{2k}}{6^{k}(i-1)!}
  & \hbox{in the $F(z)=P(z)$ case}\\
\frac{(3)
  (-1)^{i-1}\zeta(3)^{j+1}\pi^{2k}}{(4)(6^{k})(i-1)!}
  & \hbox{in the $F(z)=Q(z)$ case}
\end{cases}
\end{equation}
and as defined in~\eqref{definitionofBconstant}, we have
\begin{equation}
B = \begin{cases}
\frac{(-1)^{i+k-1}\pi^{2(j+1)}}{6^{j+1}2^{i}(k-1)!}
  & \hbox{in the $F(z)=P(z)$ case}\\
\frac{(-1)^{i+k-1}\pi^{2(j+1)}}{6^{j+1}2^{i+1}(k-1)!}
  & \hbox{in the $F(z)=Q(z)$ case}\\
\end{cases}
\end{equation}
and, finally, as in~\eqref{definitionofCconstant}
and~\eqref{definitionofDconstant} respectively, we have
\begin{equation}
C = \frac{(-1)^{i+j+k+1}}{2^{k}(j+1)!\!\ 12^{i}}\quad\hbox{in the
  $F(z) = P(z)$ case, and }
D = \ln(2)\frac{(-1)^{i+j+k}}{2^{k}j!\!\ 12^{i}}\quad\hbox{in the
  $F(z) = Q(z)$ case}
\end{equation}

\end{theorem}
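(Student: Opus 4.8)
The plan is to take logarithms in the central approximation~\eqref{saddlepointequation} and, regime by regime, isolate the one or two dominant summands using the closed forms already assembled in~\eqref{nexpalphan},~\eqref{logsqrt1overrsquared},~\eqref{aoverexp2alphanetc} and~\eqref{logsqrtoneoverfdoubleprime}. Writing $r=\exp(-\exp(\alpha(n)))$ for the weak saddle of Proposition~\ref{prop:saddle} and noting $\log(-\log r)=\alpha(n)$, equation~\eqref{saddlepointequation} gives
\[
\log[z^{n}]F(z)\sim\frac{a(\alpha(n))}{e^{2\alpha(n)}}+\frac{b(\alpha(n))}{e^{\alpha(n)}}+g(\alpha(n))+h\big(e^{\alpha(n)}\big)+n\,e^{\alpha(n)}+\tfrac12\log\frac{-2\pi}{f''(r)\,r^{2}},
\]
and by~\eqref{logsqrt1overrsquared} and~\eqref{logsqrtoneoverfdoubleprime} the final term is $\log\sqrt{1/f''(r)}+(-\log r)+\tfrac12\log(-2\pi)=\Theta(\alpha(n))+O(1)$, since $\alpha(n)\to-\infty$ while $-\log r=e^{\alpha(n)}\to 0^{+}$. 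So the proof is a matter of comparing orders of magnitude and then carrying out one substitution that puts the surviving terms in closed form.

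The key point is that the weak saddle already encodes the leading relation between $\alpha(n)$ and $n$. For $i\ge1$ that relation is, to first order, $n\,e^{3\alpha(n)}=2A\,\alpha(n)^{i-1}$, so $a(\alpha(n))/e^{2\alpha(n)}\sim A\,\alpha(n)^{i-1}/e^{2\alpha(n)}=\tfrac12\,n\,e^{\alpha(n)}$, whereas $g(\alpha(n))$ is polylogarithmic, $h(e^{\alpha(n)})=O(1)$, and $b(\alpha(n))/e^{\alpha(n)}$ is of order $n^{1/3}$ times powers of $\log n$; hence all three are $o(n^{2/3})$ and $\log[z^{n}]F(z)\sim\tfrac32\,n\,e^{\alpha(n)}$. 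To evaluate $n\,e^{\alpha(n)}$ I use $A(-1)^{i-1}>0$: the relation above gives $-\log r=(2|A|/n)^{1/3}\lambda^{(i-1)/3}$ with $\lambda:=-\alpha(n)\to+\infty$, hence $n\,e^{\alpha(n)}=(2|A|)^{1/3}n^{2/3}\lambda^{(i-1)/3}$, and feeding $-\log r$ back into $\lambda$ shows $\lambda\sim\tfrac13\ln n$; the sign factor then collapses via $(-3)^{-(i-1)/3}(2A)^{1/3}=3^{-(i-1)/3}(2|A|)^{1/3}$, giving the claimed monomial (with $i=1$ the obvious specialization). The regimes $i=0,k\ge1$ and $i=0=k,j\ge1$ run identically with the exponent $3$ replaced by $2$, resp.\ $1$: when $i=0,k\ge1$ the relation $n\,e^{2\alpha(n)}=B\,\alpha(n)^{k-1}$ forces $b(\alpha(n))/e^{\alpha(n)}\sim n\,e^{\alpha(n)}$, producing the prefactor $2$ and the constant $(-2)^{-(k-1)/2}B^{1/2}=2^{-(k-1)/2}|B|^{1/2}$ with $\lambda\sim\tfrac12\ln n$; when $i=0=k,j\ge1$ the relation $n\,e^{\alpha(n)}=-(j+1)C\,\alpha(n)^{j}$ (resp.\ $-jD\,\alpha(n)^{j-1}$) shows that $g(\alpha(n))=c(\alpha(n))$ (resp.\ $d(\alpha(n))$) beats $n\,e^{\alpha(n)}$ by the factor $-\alpha(n)/(j+1)$ (resp.\ $-\alpha(n)/j$), so the single surviving term is $g(\alpha(n))\sim(-1)^{j+1}C(\ln n)^{j+1}$ (resp.\ $(-1)^{j}D(\ln n)^{j}$), using $\lambda\sim\ln n$ with $(-1)^{j+1}C=|C|$ and $(-1)^{j}D=|D|$.

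The main obstacle I anticipate is twofold. First, the Lambert-$\operatorname{W}$ expressions in Proposition~\ref{prop:saddle} carry arguments that tend to $-\infty$, so one cannot manipulate $\operatorname{W}$ as the usual principal branch; the safe route is to stay with the transcendental relation $\log u=\log b+cu$ in the variable $u=-\log r$, extract the leading term $\lambda\sim\tfrac1p\ln n$ with $p\in\{3,2,1\}$, and verify that the iterated-logarithm corrections drop into the $o(\cdot)$ error --- this is precisely the painfully slow convergence illustrated by Tables~\ref{tab:estimationSmall}--\ref{tab:estimationBig}. Second, and more delicately, one must genuinely certify the subdominance of every discarded summand; the borderline one is $\tfrac12\log(1/f''(r))=\Theta(\alpha(n))=\Theta(\ln n)$, which is harmless whenever the claimed leading term grows strictly faster than $\ln n$, but must be retained and folded into the leading constant in the single family where the leading term is itself only of order $\ln n$, namely the triple $(0,1,0)$ in the $Q$-case. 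Once these two points are handled, the rest is mechanical: substitute~\eqref{nexpalphan}--\eqref{logsqrtoneoverfdoubleprime}, retain the dominant pieces, and perform the sign bookkeeping above.
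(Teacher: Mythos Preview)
Your approach is essentially the paper's: take logs in~\eqref{saddlepointequation}, discard the $\sqrt{1/(f''(r)r^{2})}$ contribution as lower order via~\eqref{logsqrt1overrsquared} and~\eqref{logsqrtoneoverfdoubleprime}, keep the one or two dominant summands among~\eqref{nexpalphan} and~\eqref{aoverexp2alphanetc}, and then simplify. The paper substitutes the Lambert-$\operatorname{W}$ forms from Proposition~\ref{prop:saddle} directly and then applies $\operatorname{W}(z)=\ln z-\ln\ln z+o(1)$; you instead work from the underlying saddle relation $n\,e^{p\alpha}=K\,\alpha^{m}$ (with $p\in\{3,2,1\}$) and extract $-\alpha(n)\sim\tfrac1p\ln n$ by iteration. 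That sidesteps the branch issue you flag (the $\operatorname{W}$-arguments tend to $-\infty$, so the principal-branch expansion is not literally available), and is arguably cleaner, but it lands on exactly the same first-order answers.

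Your caveat about the $(0,1,0)$ $Q$-case is sharper than anything in the paper's proof, which asserts uniformly that~\eqref{logsqrtoneoverfdoubleprime} does not contribute to first order. You are right that when $i=k=0$, $j=1$, $F=Q$, the claimed leading term is $(-1)^{1}D\ln n=(\ln 2)\ln n$ while $\log\sqrt{1/f''(r)}\sim\alpha(n)\sim-\ln n$ is of the \emph{same} order; folding it in gives $(\ln 2-1)\ln n$, which is exactly what the paper's own case study for $(0,1,0)$ yields ($[z^{n}]Q(z)\asymp n^{\log 2-1}$). So your ``obstacle'' is not a gap in your argument but a genuine inconsistency between the stated theorem and the earlier case study; your plan handles it correctly, the paper's proof does not. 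Apart from that single triple, your dominance analysis and sign bookkeeping match the paper's computations term for term.
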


\begin{proof}
We are using the framework from equation~\eqref{saddlepointequation}.
Assembling the contributions from~\eqref{nexpalphan}
and~\eqref{aoverexp2alphanetc}, and 
noting that~\eqref{logsqrt1overrsquared} and~\eqref{logsqrtoneoverfdoubleprime}
do not contribute to the first-order asymptotics of the logarithm of
the coefficients, we put these contributions together and in~\eqref{saddlepointequation},
taking the logarithm, we obtain these first order approximations:
\begin{equation}
\log{[z^{n}]F(z)} \sim \begin{cases}
(\frac{3}{2})(2A)^{1/3}n^{2/3}
& \hbox{if $i=1$}\\
n\exp\big(
- \frac{i-1}{3} \operatorname{W}\big( - \frac{3}{i-1} \big(\frac{n}{2A }\big)^{1/(i-1)}    \big)
\big)\\
{}\qquad+ A\big( - \frac{i-1}{3} \operatorname{W}\big( - \frac{3}{i-1}
\big(\frac{n}{2A }\big)^{1/(i-1)}    \big)\big)^{i-1}\\
\qquad\qquad{}\times\exp\big(
2\big(\frac{i-1}{3}\big) \operatorname{W}\big( - \frac{3}{i-1} \big(\frac{n}{2A }\big)^{1/(i-1)}    \big)
\big)
& \hbox{if $i>1$}\\
2(Bn)^{1/2} & \hbox{if $i=0$, $k=1$}\\
n\exp\big(-\frac{k-1}{2} \operatorname{W}\big( - \frac{2}{k-1}
 \big(\frac{n}{B}\big)^{1/(k-1)}    \big)\big)\\
{}\qquad+ B\big(  - \frac{k-1}{2} \operatorname{W}\big( - \frac{2}{k-1}
 \big(\frac{n}{B}\big)^{1/(k-1)}    \big) \big)^{k-1}\\
\qquad\qquad{}\times\exp\big(\frac{k-1}{2} \operatorname{W}\big( - \frac{2}{k-1}
 \big(\frac{n}{B}\big)^{1/(k-1)}    \big)\big)  & \hbox{if $i=0$, $k > 1$}\\
 n\exp\big(-j\!\ \operatorname{W}\big( - \frac{1}{j}
 \big(\frac{n}{-(j+1)C }\big)^{1/j}    \big)\big)\\
{}\qquad+ C\big(  -j\!\ \operatorname{W}\big( - \frac{1}{j} \big(\frac{n}{-(j+1)C
}\big)^{1/j}    \big) \big)^{j+1} &
\hbox{if $i=0$, $k=0$, $j\geq 1$, $F(z)=P(z)$}\\
-D + D\log( {-D/n} ) & \hbox{if $i=0$, $k=0$, $j=1$, $F(z)=Q(z)$}\\
n\exp\big(-(j-1) \operatorname{W}\big( - \frac{1}{j-1}
\big(\frac{n}{-jD }\big)^{1/(j-1)}  \big)\big)\\
{}\qquad+ D\big(  -(j-1) \operatorname{W}\big( - \frac{1}{j-1}
\big(\frac{n}{-jD}\big)^{1/(j-1)}  \big) \big)^{j} &
\hbox{if $i=0$, $k=0$, $j> 1$, $F(z)=Q(z)$}\\
\end{cases}
\end{equation}
Now we  use 
$\operatorname{W}(z) = \ln(z) - \ln\ln(z) + o(1)$ to simplify everything.
The case for $i>1$ simplifies to 
$(3/2)(-3)^{-(i-1)/3}(2A)^{1/3}(\ln{n})^{(i-1)/3}n^{2/3}$,
which agrees with the case $i=1$.
The case for $i=0$, $k>1$ simplifies to
$(2)(-2)^{-(k-1)/2}B^{1/2}(\ln{n})^{(k-1)/2}n^{1/2}$,
which agrees with the case $i=0$, $k=1$.
The case for $i=0$, $k=0$, $j\geq 1$, with $F(z)=P(z)$, simplifies to
$(-1)^{j+1}(j+1)(C)(\ln{n})^{j} + (-1)^{j+1}(C)(\ln{n})^{j+1}$,
which then simplifies to $(-1)^{j+1}(C)(\ln{n})^{j+1}$.
The case for $i=0$, $k=0$, $j > 1$, with $F(z)=Q(z)$, simplifies to
$(-1)^{j}(j)(D)(\ln{n})^{j-1} + (-1)^{j}(D)(\ln{n})^{j}$,
which further simplifies to $(-1)^{j}(D)(\ln{n})^{j}$, which agrees
with the case $i=0$, $k=0$, $j=1$, with $F(z)=Q(z)$.
Theorem~\ref{theorem63} follows as a result of these simplifications.
\end{proof}


\section*{Acknowledgements}

R. G\'omez is supported by grants DGAPA-PAPIIT IN107718 and IN110221.

M.D.~Ward's research is supported by National Science Foundation (NSF)
grants 0939370, 1246818, 2005632, 2123321, 2118329, and 2235473 by the
Foundation for Food and Agriculture Research (FFAR) grant 534662, by
the National 
Institute of Food and Agriculture (NIFA) grants 2019-67032-29077,
2020-70003-32299, 2021-38420-34943, and 2022-67021-37022
by the Society Of Actuaries grant 19111857, by
Cummins Inc., by Gro Master, by Lilly Endowment, and by Sandia National Laboratories.

\end{document}